\newcommand{\CC}{\mathrm{\mathbb{C}}}
\newcommand{\NN}{\mathrm{\mathbb{N}}}
\newcommand{\RR}{\mathrm{\mathbb{R}}}
\newcommand{\ZZ}{\mathrm{\mathbb{Z}}}
\newcommand{\GL}{\mathrm{GL}}
\newcommand{\id}{\mathrm{id}}
\newcommand{\tensor}{\otimes}
\newcommand{\diag}{\mathrm{diag}}
\newcommand{\End}{\mathrm{End}}
\newcommand{\tr}{\mathrm{tr}}
\newcommand{\Aut}{\mathrm{Aut}}
\newcommand{\rank}{\mathrm{rank}}
\newcommand{\qexp}{q\textnormal{-exp}}
\newcommand{\ad}{\mathrm{ad}}
\newcommand{\SU}{\mathrm{SU}}
\newcommand{\Uq}{\mathcal{U}_q}
\def\qbin#1#2{
  \left[ \genfrac{}{}{0pt}{}{#1}{#2} \right]
}
\theoremstyle{definition}
\newtheorem{theorem}{Theorem}[section]
\newtheorem{proposition}[theorem]{Proposition}
\newtheorem{lemma}[theorem]{Lemma}
\newtheorem{definition}[theorem]{Definition}
\newtheorem{corollary}[theorem]{Corollary}
\newtheorem{example}[theorem]{Example}
\newtheorem{remark}[theorem]{Remark}
\numberwithin{equation}{section}
\title{Radial part calculations for quantum symmetric pairs with simple generators}
\author{
  Noud Aldenhoven\footnote{Email address: \href{mailto:n.aldenhoven@math.ru.nl}{n.aldenhoven@math.ru.nl}}\\ 
  Radboud University Nijmegen, FNWI, IMAPP, \\
  Heyendaalseweg 135, 6525 AJ Nijmegen, The Netherlands
}
\begin{document}

\maketitle

\begin{abstract}
We introduce the class of quantum symmetric pairs with simple generators.
It is proved that the radial part of every element of a quantum symmetric pair with simple generators restricted to the set of regular points of this element can be computed.
These computations are done explicitly for the Casimir elements of the quantum analogues of $(\SU(2), \mathrm{U}(1))$, $(\SU(2) \times \SU(2), \diag)$ and $(\SU(3), \mathrm{U}(2))$ and give rise to second order $q$-difference equations for matrix valued spherical functions in general.
\end{abstract}

\section{Introduction}
Computing the radial part of Casimir elements of quantum symmetric pairs goes back to by Koornwinder \cite{Koo93} for the quantum analogue of $(\SU(2), \mathrm{U}(1))$.
Koornwinder \cite{Koo93} identified the action of the Casimir element of the quantized universal enveloping algebra of $\SU(2)$ with a second order $q$-difference equation for the Askey-Wilson polynomials in two parameters.
It turns out that the radial part of Casimir elements of quantum analogues of symmetric pairs, which generate the center of the quantized universal enveloping algebra, corresponds to $q$-difference equations for spherical functions.
Dijkhuizen, Noumi and Sugitani \cite{DN98, DS99, Nou96, NDS97, NS95, Sug99} continued Koornwinder's work and found many more examples of $q$-difference equations which have spherical functions as solutions.
They found many important non-trivial connections between the representations theory of quantum symmetric pairs and orthogonal polynomials appearing in the $q$-Askey scheme \cite{KLS10}.
Since the quantized universal enveloping algebra is a Hopf algebra, which does not contain many Hopf subalgebras, one of the main problems was to find a good analogue for the symmetric pairs $(G, K)$ for compact Lie groups $G$.
Letzter \cite{Let99, Let02, Let03, Let04, Let08} studied quantum symmetric pairs of the form $(\Uq(\mathfrak{g}), \mathcal{B})$, where $\Uq(\mathfrak{g})$ is the quantized universal enveloping algebra of Lie algebra $\mathfrak{g}$ and $\mathcal{B}$ a right coideal subalgebra of $\Uq(\mathfrak{g})$, i.e. $\Delta(\mathcal{B}) \subseteq \mathcal{B} \tensor \Uq(\mathfrak{g})$.
The quantum symmetric pairs turn out to be good analogues for the symmetric pairs $(G, K)$ of Lie groups.

If $\mathcal{A}$ is the quantum torus of $\Uq(\mathfrak{g})$, computing the scalar valued radial part of $Y \in \Uq(\mathfrak{g})$ boils down to computing $AY$ modulo the augmentation ideal $\mathcal{B}_+ = \{ B \in \mathcal{B} : \epsilon(B) = 0 \}$ for almost all $A \in \mathcal{A}$.
Note that we use the counit $\epsilon$ as a one-dimensional representation of the coideal subalgebra $\mathcal{B}$.
More recently Letzer \cite{Let03, Let04} united the computations for the scalar valued radial parts for all quantum symmetric pairs where the restricted root system is reduced. 
Letzter showed \cite[Theorem 8.2]{Let04} that the scalar valued radial part of the Casimir elements for the quantum symmetric pairs where the restricted root system is reduced  gives rise to $q$-difference equations for Macdonald polynomials.

However restricting the radial part to the scalar valued level, where only the trivial irreducible representation $\epsilon$ is taking into account, throws away information.
Therefore the problem remains to calculate the radial part of elements of the quantized universal enveloping algebra on the level of any finite dimensional irreducible representation of $\mathcal{B}$ in general.
For semi-simple Lie algebras this problem has been studied, e.g. Warner \cite[Chapter 8.2]{War72} for computations of the radial part of the center and Casselman and Mili\v{c}i\'c \cite{CM82} for computations of the radial part for any element of $\mathcal{U}(\mathfrak{g})$.
For a symmetric pair $(G, K)$ of Lie groups we have the Cartan decomposition $G = KAK$, see \cite[Chapter V, Theorem 6.7]{Hel78}, where $A$ is the torus of $G$.
Unfortunately, there does not exist a Cartan decomposition for the symmetric pairs $(\mathcal{U}(\mathfrak{g}), \mathcal{U}(\mathfrak{k}))$ of universal enveloping algebras in general.
Therefore Casselman and Mili\v{c}i\'c \cite{CM82} used the Iwasawa decomposition for $(\mathcal{U}(\mathfrak{g}), \mathcal{U}(\mathfrak{k}))$.
Using the Iwasawa decomposition Casselman and Mili\v{c}i\'c \cite{CM82} proved that there exists a map $\Pi$ which maps every element of $\mathcal{U}(\mathfrak{g})$ to a matrix valued differential equation defined on the regular points of the torus.
The map $\Pi$ is a useful tool for finding matrix valued differential equations for matrix valued spherical functions, see \cite[Theorem 3.1]{CM82} and \cite[Example 3.7]{CM82}.

Using \cite[Theorem 3.1]{CM82} Koelink, van Pruijssen and Rom\'an \cite{KvPR12, KvPR13} computed the matrix valued radial part for the two Casimir elements of $\mathcal{U}(\mathfrak{su}(2)) \tensor \mathcal{U}(\mathfrak{su}(2))$ related to the symmetric pair $(\SU(2) \times \SU(2), \diag)$.
Koelink, van Pruijssen and Rom\'an \cite{KvPR12, KvPR13} studied matrix valued spherical functions on $(\SU(2) \times \SU(2), \diag)$.
The matrix valued radial part gives rise to a first and second order differential equation.
These two differential equations \cite[Theorem 7.14]{KvPR13} are essential for completely classifying all matrix valued orthogonal polynomials related to the spherical functions of $(\SU(2) \times \SU(2), \diag)$, see \cite[Theorem 6.3]{KvPR13}.

In \cite{AKR15} matrix valued spherical functions on the quantum analogue of $(\SU(2) \times \SU(2), \diag)$ are studied.
The center of the quantum analogue of $\SU(2) \times \SU(2)$ is generated by two Casimir elements.
The radial part on the level of any finite dimensional irreducible representation of the quantized universal enveloping algebra is computed, see \cite[Proposition 5.10]{AKR15}.
The $q$-difference equations are essential to complete the matrix valued spherical functions, which in turn are related to matrix valued orthogonal polynomials, see \cite[Theorem 4.17]{AKR15}.

The method used in \cite{AKR15} to compute the radial part of the Casimir elements on the level of any finite dimensional irreducible representation is ad-hoc.
This paper shows that these computations can be extended to a subclass of the quantum symmetric pairs with so-called ``simple generators'' such that we can put these computations in a more general framework.
Moreover we give an explicit algorithm to compute the radial part of $AY$ for every $Y \in \Uq(\mathfrak{g})$ and $A \in \mathcal{A}$ such that $A$ is a regular point for $Y$.
This method is a $q$-analogue for Casselman and Mili\v{c}i\'c \cite{CM82}.
We apply this method to compute the radial part of the Casimir elements of the quantum analogue of $(\SU(2), \mathrm{U}(1))$, $(\SU(2) \times \SU(2), \diag)$ and $(\SU(3), \mathrm{U}(2))$.
Note that this method not only applies to central elements, but that it can be used to compute the radial part in general for every element of a quantum symmetric pair with simple generators.

However the problem of calculating the radial part of elements of quantum symmetric pairs in general still remains an open question.
To extend the algorithm described in this paper to quantum symmetric pairs in general we must find commutation relations between $F_i$ and $\theta_q(F_j K_j)$.
We were not able to find commutation relations that behave well enough to apply the method.
Therefore we will only consider quantum symmetric pairs with simple generators.

The article is organized as follows.
In Section \ref{q-radial_sec_quea} we fix the notation of quantized universal enveloping algebras on Kac-Moody algebras.
In Section \ref{q-radial_sec_symmetricquantumkacmoodypairs} the definition of quantum symmetric pairs with simple generators is given. 
The class of quantum symmetric pairs with simple generators are a subclass of quantum symmetric pairs introduced by Kolb \cite{Kol14}.
We give a complete classification of all quantum symmetric pairs with simple generators related to the symmetric pairs of semi-simple Lie algebras found in \cite{Ara62} and \cite{Hel78}.
In Section \ref{q-radial_sec_quantuminfcartandecomp} we prove a quantum analogue of the Iwasawa decomposition for quantum symmetric pairs with simple generators.
Then we state Theorem \ref{q-radial_thm_main} of the article, which is the main theorem of the paper.
The proof of Theorem \ref{q-radial_thm_main} explains how to calculate the radial part of $AZ$.
Theorem \ref{q-radial_thm_main} is technical and we will see in Section \ref{q-radial_sec_sphericalfunctions} that this theorem has important applications.
In Section \ref{q-radial_sec_sphericalfunctions} we study spherical functions on quantum symmetric pairs with simple generators.
We show that there exists a quantum analogue for the map $\Pi$ of Casselman and Mili\v{c}i\'c \cite{CM82}, see Definition \ref{q-radial_def_radial_part} and Theorem \ref{q-radial_thm_CM_analogue}.
At the end of Section \ref{q-radial_sec_sphericalfunctions} we study the $*$-invariance and state Theorem \ref{q-radial_thm_bi_invariant_weight} which proves an orthogonality relation for spherical functions.
In Section \ref{q-radial_sec_applications} we show that the proof of Theorem \ref{q-radial_thm_main} can be used to compute the radial part explicitly for the Casimir elements of the quantum analogues of $(\SU(2), \mathrm{U}(1))$, $(\SU(2) \times \SU(2), \diag)$ and $(\SU(3), \mathrm{U}(2))$.
The results for the quantum analogue of $(\SU(2), \mathrm{U}(1))$ match with the results of Koornwinder \cite{Koo93} when restricted to the trivial representation $\epsilon$.
Moreover, the radial part calculations of the center in general give an alternative proof for \cite[Theorem 7.6]{Koe96}.
The results for the quantum analogue of $(\SU(2) \times \SU(2), \diag)$ match with the results of \cite{AKR15}.
The radial part of the two second order Casimir elements, generating the center of the quantum analogue of $(\SU(3), \mathrm{U}(2))$, are calculated.
We identify the radial part of the center restricted to the trivial representation $\epsilon$ with Askey-Wilson polynomials in two free parameters, which match with the results of Dijkhuizen and Noumi \cite{DN98} for the quantum analogue of $(SU(3), \textrm{U}(2))$.
Note that the quantum analogue of $(\SU(3), \mathrm{U}(2))$ is excluded by Letzter \cite{Let04}, since the restricted root system is non-reduced.
Moreover, we compute the radial part of the center in general extending the result of Dijkhuizen en Noumi \cite{DN98} for the quantum analogue of $(\SU(3), \mathrm{U}(2))$ to the matrix valued case.

\section{Quantized universal enveloping algebra} \label{q-radial_sec_quea}
In this section we fix the notation.
For more information we refer to Kac \cite{Kac90}, Kolb \cite{Kol14} and Lusztig \cite{Lus94}.
We mainly follow Kolb \cite[\S 2.1 and \S 3.1]{Kol14}.

Let $I$ be a finite set, and let $A = (a_{ij})_{i,j \in I}$ be a generalized Cartan matrix, i.e. for all $i, j \in I$, $a_{ij} \in \ZZ$, $a_{ii} = 2$, $a_{ij} \leq 0$ for $i \neq j$ and $a_{ij} = 0$ if and only if $a_{ji} = 0$.
Assume that there is a $D = \diag(\epsilon_i : i \in I)$ with coprime entries $\epsilon_i \in \NN$ such that $DA$ is symmetric.
Define the dual weight lattice $P^{\vee}$ to be the free abelian group of rank $2|I| - \rank(A)$ generated over $\mathbb{Z}$ by $\{h_i : i \in I\}$ and $\{ d_s : s = 1, 2, \ldots, |I| - \rank(A) \}$.
Set $\mathfrak{h} = \mathbb{C} \tensor_{\mathbb{Z}} P^{\vee}$.
The weight lattice $P$ of $A$ defined by $P = \{ \lambda \in \mathfrak{h}^* : \lambda(P^{\vee}) \subseteq \ZZ \}$.
Define $\Pi^{\vee} = \{ h_i : i \in I \}$, choose $\alpha_i \in \mathfrak{h}^*$ linearly independent such that $\alpha_i(h_j) = a_{ji}$, $\alpha_i(d_s) \in \{0,1\}$ and take $\Pi = \{ \alpha_i : i \in I \}$.
Denote $Q = \ZZ \Pi$ for the root lattice and $Q^{\vee} = \ZZ \Pi^{\vee}$ for the coroot lattice of $Q$.
Choose the set $Q^+ = \sum_{i \in I} \NN \alpha_i$ of positive roots of $Q$.
We extend $P^{\vee}$, $Q^{\vee}$ and $Q$ to $\frac{1}{2}\mathbb{Z}$ by taking $\check{P}^{\vee} = \frac{1}{2}\ZZ[P^{\vee}]$, $\check{Q}^{\vee} = \frac{1}{2}\ZZ[\check{Q}^{\vee}]$ and $\check{Q} = \frac{1}{2}\ZZ[Q]$.
Introduce the bilinear form on $\mathfrak{h}^*$ by $(\alpha_i, \alpha_j) = \epsilon_i a_{ij}$.

The Kac-Moody algebra $\mathfrak{g} = \mathfrak{g}(A)$ is the Lie algebra over $\CC$ generated by $\mathfrak{h}$ and $e_i, f_i$ for $i \in I$ with relations given in \cite[\S 1.3]{Kac90}.
Let $\mathfrak{g}' = [\mathfrak{g}, \mathfrak{g}]$ be the derived Lie algebra and note that $\mathfrak{g}'$ is generated by $e_i, f_i$ for $i \in I$.

For any $i \in I$ define the fundamental reflections $r_i \in \GL(\mathfrak{h})$ by $r_i(h) = h - \alpha_i(h) h_i$ for all $h \in \mathfrak{h}$.
The Weyl group $W$ is generated by the fundamental reflections $r_i$.

Let $\CC(q)$ be the complex field of rational functions in an indeterminate $q$ over $\CC$.
The quantized enveloping algebra $\mathcal{U}_q(\mathfrak{g})$ of $\mathfrak{g}$ is the associative unital $\CC(q)$-algebra generated by $E_i$, $F_i$ and $K_{\mu}$ for $i \in I$ and $\mu \in P^{\vee}$, subjected to the relations
\begin{equation}
\label{q-radial_eqn_uqgrelations}
\begin{split}
K_0 &= 1, \quad K_{h} K_{h'} = K_{h + h'},  \\
K_{h} E_i &= q^{\alpha_i(h)} E_i K_{h}, \quad K_{h} F_i = q^{-\alpha_i(h)} F_{i} K_{h},  \\
[E_i, F_j] &= \delta_{i,j} \frac{K_i - K_i^{-1}}{q_i - q_i^{-1}}, \quad \text{where } q_i = q^{\epsilon_i}, K_i = K_{h_i}^{\epsilon_i},
\end{split}
\end{equation}
where $h, h' \in P^{\vee}$ and the quantum Serre relations $F_{ij}(E_i, E_j) = F_{ij}(F_i, F_j) = 0$ for all $i, j \in I$ where
\begin{equation*}
F_{ij}(X,Y) = \sum_{n=0}^{1-a_{ij}} (-1)^n \qbin{1-a_{ij}}{n}_{q_{i}} X^{1-a_{ij}-n} Y X^n.
\end{equation*}
The coproduct $\Delta$, counit $\epsilon$ and antipode $S$ on $\mathcal{U}_q(\mathfrak{g})$ are given by
\begin{equation*}
\begin{split}
\Delta : E_i, F_i, K_{h} &\mapsto E_i \tensor 1 + K_i \tensor E_i, F_i \tensor K_i^{-1} + 1 \tensor F_i, K_i \tensor K_i, \\
\epsilon : E_i, F_i, K_{h} &\mapsto 0, 0, 1 \\
S : E_i, F_i, K_{h} &\mapsto -K_i^{-1} E_i, -F_i K_i, K_{-h}.
\end{split}
\end{equation*}
With these actions $\Uq(\mathfrak{g})$ becomes a Hopf algebra.
Let $\Uq(\mathfrak{g}')$ be the Hopf subalgebra of $\Uq(\mathfrak{g})$ generated by $E_i$, $F_i$ and $K_i^{\pm 1}$ for all $i \in I$.

For Theorem \ref{q-radial_thm_quantumiwasawa} and Theorem \ref{q-radial_thm_main} we have to extend the quantized universal enveloping algebra with the roots of $K_{h}$.
We denote $\check{\mathcal{U}}_q(\mathfrak{g})$ for the associative $\CC(q)$-algebra generated by $\Uq(\mathfrak{g})$ and $K_{h}$ for $h \in \check{P}^{\vee}$ with the same relations (\ref{q-radial_eqn_uqgrelations}) taking $h, h' \in \check{P}^{\vee}$.
The Hopf subalgebra $\check{\mathcal{U}}_q(\mathfrak{g}')$ of $\check{\mathcal{U}}_q(\mathfrak{g})$ is generated by $E_i$, $F_i$ and $K_{h}$ for $h \in \check{Q}^{\vee}$.
Note that $\mathcal{U}_q(\mathfrak{g})$ is a Hopf subalgebra of $\check{\mathcal{U}}_q(\mathfrak{g})$ and that $\mathcal{U}_q(\mathfrak{g}')$ is a Hopf subalgebra of $\check{\mathcal{U}}_q(\mathfrak{g}')$.

Let $U^+$, $U^-$ and $U^0$ be the Hopf subalgebras of $\Uq(\mathfrak{g})$ generated respectively by $\{E_i : i \in I\}$, $\{F_i : i \in I\}$ and $\{K_h : h \in P^{\vee}\}$.
Take $\check{U}^0$ to be the Hopf subalgebra of $\check{\mathcal{U}}_q(\mathfrak{g})$ generated by $\{K_h : h \in \check{Q}^{\vee}\}$.
By \cite[\S 3.2]{Lus94} we have $U^+ \tensor U^0 \tensor U^{-} \simeq \Uq(\mathfrak{g})$ and $U^+ \tensor \check{U}^0 \tensor U^{-} \simeq \check{\mathcal{U}}_q(\mathfrak{g})$ as vector spaces under the multiplication map.
Write $U^{0 \prime}$ for the subalgebra of $U^0$ generated by all elements $\{K_i^{\pm 1} : i \in I\}$ and write $\check{U}^{0 \prime}$ for the subalgebra of $\check{U}^0$ generated by $\{ K_h : h \in \check{Q}^{\vee} \}$.
By \cite[\S 3.2]{Lus94} we have $U^+ \tensor U^{0 \prime} \tensor U^- \simeq \Uq(\mathfrak{g}')$ and $U^+ \tensor \check{U}^{0 \prime} \tensor U^- \simeq \check{\mathcal{U}}_q(\mathfrak{g}')$ as vector spaces under the multiplication map.
If $\rank(A) = |I|$ then $\Uq(\mathfrak{g}) = \Uq(\mathfrak{g}')$, and in explicit cases we write $\mathcal{U}_q(\mathfrak{g})$ for $\mathcal{U}_q(\mathfrak{g}')$.

For $\CC(q)[Q]$, the group algebra of the root lattice, we define an algebra isomorphism $\CC(q)[Q] \to U^{0\prime}$ defined on the generators by $\alpha_i \mapsto K_i$.
For any $\beta \in Q$ we write
\begin{equation} \label{q-radial_eqn_KinQ}
K_{\beta} = \prod_{i \in I} K_{i}^{n_i}, \quad \text{where } \beta = \sum_{i \in I} n_i \alpha_i.
\end{equation}
By \eqref{q-radial_eqn_KinQ} we find commutation relations of the form
\begin{equation*}
K_{\beta} E_i = q^{(\beta, \alpha_i)} E_i K_{\beta}, \quad K_{\beta} F_i = q^{-(\beta, \alpha_i)} F_i K_{\beta},
\end{equation*}
for all $\beta \in Q$ and $i \in I$.
By the same argument there exists an algebra isomorphism $\CC(q)[\check{Q}] \to \check{U}^{0\prime}$ and we define $K_{\beta} \in \check{U}^{0\prime}$ for any $\beta \in \check{Q}$ similar to \eqref{q-radial_eqn_KinQ}.

Take $n \in \NN$ and $U = (u_1, u_2, \ldots, u_n) \in I^n$, we abbreviate $F_U = F_{u_1} F_{u_2} \ldots F_{u_n}$ and $E_U = E_{u_1} E_{u_2} \ldots E_{u_n}$.

\section{Quantum symmetric pairs} \label{q-radial_sec_symmetricquantumkacmoodypairs}

We introduce admissible pairs which are a generalization of the Satake diagrams as given in \cite{Ara62}, see also Kolb \cite[Definition 2.3]{Kol14}.

Let $X \subseteq I$ such that $\mathfrak{g}_X$ is of finite type, see \cite[p. 399]{Kol14}. 
Write $W_X \subseteq W$ for the corresponding parabolic subgroup of $W$ with longest element $w_{X}$ and $\Phi_X \subseteq \Phi$ for the corresponding root system.
Let $\rho^{\vee}_X$ be the half sum of the positive coroots of $\Phi_X$.
For $\Aut(A)$ we denote the group of all permutations $\sigma$ on $I$ such that $a_{i,j} = a_{\sigma(i), \sigma(j)}$.
A pair $(X, \tau)$ where $X \subseteq I$ and $\tau \in \Aut(A, X) = \{ \sigma \in \Aut(A) : \sigma(X) = X \}$ is called an admissible pair if
\begin{enumerate}
\item $\tau^2 = \id_I$,
\item The action of $\tau$ on $X$ coincides with the action of $-w_X$,
\item If $j \in I \backslash X$ and $\tau(j) = j$, then $\alpha_j(\rho_X^{\vee}) \in \ZZ$.
\end{enumerate} 
Given an admissible pair $(X, \tau)$ we define an involution $\theta = \theta(X, \tau)$ by \cite[Theorem 2.5]{Kol14} such that on $\mathfrak{h}$ we have $\theta(h) = -w_X\tau(h)$.
By duality $\theta$ induces a map $\Theta : \mathfrak{h}^* \to \mathfrak{h}^* : \alpha \mapsto -w_X \tau(\alpha)$. 
Let $\mathfrak{k}' = \{ x \in \mathfrak{g}' : \theta(x) = x \}$ be the fixed point Lie subalgebra of $\mathfrak{g}'$ with respect to involution $\theta$.
Take $\mathcal{M}_X$ to be the subalgebra of $\Uq(\mathfrak{g}')$ generated by $\{E_i, F_i, K_i^{\pm 1} : i \in X\}$ and $\check{\mathcal{M}}_X$ to be the subalgebra of $\check{\mathcal{U}}_q(\mathfrak{g}')$ generated by $\{E_i, F_i : i \in X\}$ and $\{K_{h} : h \in \frac{1}{2}\mathbb{Z}[\{\alpha_i : i \in X\}]\}$.
The quantum torus $\mathcal{A} = A_{\Theta}$ is generated by all elements $K_{\alpha}$ where $\alpha \in Q$ such that $\Theta(\alpha) = -\alpha$ and write $\check{\mathcal{A}} = \check{A}_{\Theta}$ for the quantum torus generated by $K_{\alpha}$ where $\alpha \in \check{Q}$ and $\Theta(\alpha) = -\alpha$.
Let $U^{0\prime}_{\Theta}$ be the subalgebra of $U^{0\prime}$ consisting of all the fixed points $K_{\beta}$, where $\beta \in Q$ and $\Theta(\beta) = \beta$ and let $\check{U}^{0\prime}_{\Theta}$ be the subalgebra of $\check{U}^{0\prime}$ with elements $K_{\beta}$ such that $\beta \in \check{Q}$ and $\Theta(\beta) = \beta$.

For every admissible pair $(X, \tau)$, Kolb \cite[Definition 4.3]{Kol14} defines the quantum involution $\theta_q = \theta_q(X, \tau) : \Uq(\mathfrak{g}') \to \Uq(\mathfrak{g}')$.
In general the quantum involution $\theta_q$ is not a Hopf algebra automorphism and is not an involution, i.e. $\theta_q^2 \neq \id$, in general.
However we always have $\theta_q(K_h) = K_{\theta(h)}$, $\theta_q | \mathcal{M}_X = \id_{\mathcal{M}_X}$ and $\theta_q \to \theta$ for $q \rightarrow 1$.

\begin{definition} \label{q-radial_dfn:simplegenerator}
For an admissible pair $(X, \tau)$ take $\textbf{c} = (c_i)_{i \in I \backslash X} \in (\CC(q)^{\times})^{I \backslash X}$, $\textbf{s} = (s_i)_{i \in I \backslash X} \in (\CC(q))^{I \backslash X}$ and define $\mathcal{B} = \mathcal{B}_{\textbf{c}, \textbf{s}} = \mathcal{B}_{\textbf{c}, \textbf{s}}(X, \tau)$ to be the subalgebra of $\Uq(\mathfrak{g}')$ generated by $\mathcal{M}_X$, $U^{0 \prime}_{\Theta}$ and
\begin{equation*}
B_i^{\textbf{c}, \textbf{s}} := B_i := F_i + c_i \theta_q(F_i K_i) K_i^{-1} + s_i K_i^{-1}, \quad i \in I \backslash X.
\end{equation*}
The pair $(\Uq(\mathfrak{g}'), \mathcal{B})$ is called the quantum symmetric pair related to the admissible pair $(X, \tau)$.
If $s = \textbf{0}$ we often write $\mathcal{B}_{\textbf{c}} = \mathcal{B}_{\textbf{c}, \textbf{0}}$.
Note that $\mathcal{B}$ is a right coideal of $\Uq(\mathfrak{g}')$ \cite[Proposition 5.2]{Kol14}, i.e. $\Delta(\mathcal{B}) \subseteq \mathcal{B} \tensor \Uq(\mathfrak{g}')$.
If, for all $i \in I \backslash X$, we have $\theta_q(F_i K_i) = -v_i E_{\tau(i)}$, where $v_i \in \CC(q)^\times$, we call $(\Uq(\mathfrak{g}'), \mathcal{B})$ the quantum symmetric pair with simple generators (related to the admissible pair $(X, \tau)$).

The algebra $\check{\mathcal{B}}$ of algebra $\check{\Uq}(\mathfrak{g}')$ is generated by $\{B_i\}_{i \in I \backslash X}, \check{\mathcal{M}}_X$ and $\check{U}^{0 \prime}_{\Theta}$.
The quantum symmetric pair $(\check{\Uq}(\mathfrak{g}'), \check{\mathcal{B}})$ has simple generators if $(\Uq(\mathfrak{g}'), \mathcal{B})$ has simple generators.
\end{definition}

\begin{definition}
Let $\mathcal{B}_1$ and $\mathcal{B}_2$ be two right coideals of $\Uq(\mathfrak{g}')$.
We call $\mathcal{B}_1$ equivalent to $\mathcal{B}_2$ if there exists a Hopf algebra isomorphism $\phi$ on $\Uq(\mathfrak{g}')$ such that $\phi(\mathcal{B}_1) = \mathcal{B}_2$.
\end{definition}

We define the left adjoint action of $\Uq(\mathfrak{g})$ on itself by
\begin{equation*}
\ad(x)(y) = \sum_{(x)} x_{(1)} y S(x_{(2)}),
\end{equation*}
where $x, y \in \Uq(\mathfrak{g})$.

\begin{proposition} \label{q-radial_prop_simple_B}
Let $(X, \tau)$ be an admissible pair for the quantum symmetric pair $(\Uq(\mathfrak{g}'), \mathcal{B}_{\textbf{c}, \textbf{s}})$.
The quantum symmetric pair $(\Uq(\mathfrak{g}'), \mathcal{B}_{\textbf{c}, \textbf{s}})$ has simple generators if and only if for every $i \in I \backslash X$ the $\ad(\mathcal{M}_X)$-module $\ad(\mathcal{M}_X)(E_i)$ is one dimensional.
\end{proposition}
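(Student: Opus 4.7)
The strategy is to characterize the one-dimensionality of $\ad(\mathcal{M}_X)(E_i)$ directly via the quantum Serre relations, and then to match this with the simple-generators condition using weights and Kolb's explicit construction of $\theta_q$. First I would compute the $\ad(\mathcal{M}_X)$-action on $E_i$ for $i \in I \setminus X$: from $\Delta(F_j) = F_j \tensor K_j^{-1} + 1 \tensor F_j$ and $i \neq j$, one finds $\ad(F_j)(E_i) = [F_j, E_i] K_j = 0$; from $\Delta(E_j) = E_j \tensor 1 + K_j \tensor E_j$ one gets $\ad(E_j)(E_i) = E_j E_i - q^{(\alpha_j, \alpha_i)} E_i E_j$, which collapses by the length-one quantum Serre relation when $a_{ji} = 0$ and is otherwise a nonzero weight vector of weight $\alpha_i + \alpha_j \neq \alpha_i$. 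Since $\ad(K_j)(E_i)$ is only a scalar multiple of $E_i$, we conclude that $\ad(\mathcal{M}_X)(E_i)$ is one-dimensional if and only if $a_{ji} = 0$ for every $j \in X$, i.e.\ the node $i$ is disconnected from $X$ in the Dynkin diagram of $A$.

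The equivalence with the simple-generators condition rests on a weight argument. Since $\theta_q(K_h) = K_{\theta(h)}$, the map $\theta_q$ sends a weight vector of weight $\mu$ to one of weight $\Theta(\mu)$; because $F_i K_i$ has weight $-\alpha_i$, the element $\theta_q(F_i K_i)$ has weight $w_X(\alpha_{\tau(i)})$, so the identity $\theta_q(F_i K_i) = -v_i E_{\tau(i)}$ forces $w_X(\alpha_{\tau(i)}) = \alpha_{\tau(i)}$. The key lemma is: for $k \in I \setminus X$, $w_X(\alpha_k) = \alpha_k$ if and only if $a_{jk} = 0$ for every $j \in X$. The $\Leftarrow$ direction is immediate since each $s_j$ with $j \in X$ fixes $\alpha_k$. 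For $\Rightarrow$, I would decompose $\alpha_k = \pi_X(\alpha_k) + \alpha_k^\perp$ orthogonally for the bilinear form; since $w_X \in W_X$ fixes $\alpha_k^\perp$, the hypothesis forces $\pi_X(\alpha_k)$ to be $w_X$-fixed. The second axiom of admissibility gives $w_X(\alpha_j) = -\alpha_{\tau(j)}$ for $j \in X$, so a $w_X$-fixed element $\sum_{j \in X} c_j \alpha_j$ of the $X$-span satisfies $c_j = -c_{\tau(j)}$. Pairing this element with $\alpha_i$ and then with $\alpha_{\tau(i)}$, and using $\tau$-invariance of the form together with $\epsilon_{\tau(i)} = \epsilon_i$, yields $a_{\tau(i), k} = -a_{i, k}$; both sides being nonpositive integers, they must vanish, forcing $\pi_X(\alpha_k) = 0$. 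This short linear-algebra step is the main obstacle, as it is the only place where the sign restriction on the generalized Cartan matrix is combined with the admissibility data.

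To finish, if the simple-generators condition holds then the lemma forces $a_{j, \tau(i)} = 0$ for all $j \in X$, and $\tau \in \Aut(A, X)$ transfers this to $a_{ji} = 0$ for all $j \in X$; by the first step $\ad(\mathcal{M}_X)(E_i)$ is one-dimensional. Conversely, if $\ad(\mathcal{M}_X)(E_i)$ is one-dimensional then $a_{ji} = 0$ for all $j \in X$, hence $a_{j, \tau(i)} = 0$ by $\tau$-invariance, hence $w_X(\alpha_{\tau(i)}) = \alpha_{\tau(i)}$ by the lemma. Substituted into Kolb's formula for $\theta_q(F_i K_i)$ in \cite[Definition 4.3]{Kol14}, every Lusztig braid $T_j$ with $j \in X$ acts trivially on $E_{\tau(i)}$ since $a_{j, \tau(i)} = 0$, and the accompanying $\check{U}^0$-factor collapses to a scalar in $\CC(q)^\times$; injectivity of the algebra automorphism $\theta_q$ then guarantees this scalar is nonzero, so $\theta_q(F_i K_i) = -v_i E_{\tau(i)}$ with $v_i \in \CC(q)^\times$.
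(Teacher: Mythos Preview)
Your proof is correct, but it follows a different route than the paper's. The paper invokes \cite[Theorem~4.4(3)]{Kol14} as a black box: that result says $\theta_q(F_j K_j)$ is, up to a nonzero scalar, the \emph{highest} weight vector $\ad(Z_i^+)(E_i)$ of the $\ad(\mathcal{M}_X)$-module generated by the \emph{lowest} weight vector $E_i$; the module is then one-dimensional precisely when these two vectors coincide, which is exactly the simple-generators condition. Your argument instead proves directly that one-dimensionality of $\ad(\mathcal{M}_X)(E_i)$ is equivalent to $a_{ji}=0$ for all $j\in X$ (this is the content of the paper's Lemma~\ref{q-radial_lem_adEi_alphaij} and Corollary~\ref{q-radial_cor_alternative_simple_B}, which appear \emph{after} the proposition there), and then links that vanishing to the simple-generators condition via a weight computation and the root-theoretic lemma $w_X(\alpha_k)=\alpha_k \Leftrightarrow a_{jk}=0$ for all $j\in X$. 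For the converse you unpack \cite[Definition~4.3]{Kol14} explicitly and use that Lusztig's braid operators $T_j$ fix $E_{\tau(i)}$ when $a_{j,\tau(i)}=0$.

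What each approach buys: the paper's argument is shorter and conceptually clean once one accepts Kolb's structural theorem, and it keeps the equivalence ``$a_{ji}=0$ for all $j\in X$'' as a separate corollary. Your approach is more self-contained and hands-on---it avoids citing the highest-weight description of $\theta_q(F_iK_i)$---at the cost of proving the auxiliary root lemma (which, incidentally, admits a quicker proof: $-\alpha_k$ is $\Phi_X$-dominant since $a_{jk}\le 0$, so $w_X$ fixes it iff it is also antidominant, forcing $a_{jk}=0$). One small remark: in your lemma you switch to using $i$ for an index in $X$, clashing with the ambient $i\in I\setminus X$; relabel to avoid confusion.
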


\begin{proof}
Suppose that $(\Uq(\mathfrak{g}'), \mathcal{B}_{\textbf{c}, \textbf{s}})$ has simple generators. 
Fix $i \in I \backslash X$.
There exists a highest weight vector of $\ad(\mathcal{M}_X)(E_i)$ of the form $\ad(Z_i^+(X))E_i)$, with $Z_i^+(X) = E_V$ and $V = (j_1, j_2, \ldots, j_r)$, see \cite[Lemma 3.5, (4.3) and (4.4)]{Kol14}.
Also by \cite[Lemma 3.5]{Kol14} $E_i$ is a lowest weight vector of $\ad(\mathcal{M}_X)(E_i)$.
Let $j \in I$ such that $\tau(j) = i$, then by \cite[Theorem 4.4.(3)]{Kol14} we have $\theta_q(X,\tau)(F_j K_j) = -w_i \ad(Z_{i}^+)(E_{i})$ for some $w_i \in \CC(q)^{\times}$.
Since $(\Uq(\mathfrak{g}'), \mathcal{B}_{\textbf{c}, \textbf{s}})$ has simple generators $\theta_q(X, \tau)(F_j K_j) = -v_i E_i$ for some $v_i \in \CC(q)^{\times}$.
Therefore $E_i$ is a highest weight vector and a lowest weight vector for the irreducible $\ad(\mathcal{M}_X)$-module $\ad(\mathcal{M}_X)(E_i)$.
This shows that $\ad(\mathcal{M}_X)(E_i)$ is one dimensional.

Suppose for all $i \in I \backslash X$ we have that the $\ad(\mathcal{M}_X)$-module $\ad(\mathcal{M}_X)(E_i)$ is one dimensional, then by \cite[Theorem 4.4.(3)]{Kol14} $\theta_q(X,\tau)(F_i K_i) = -v_i E_{\tau(i)}$, where $v_i \in \CC(q)$.
By Definition \ref{q-radial_dfn:simplegenerator} we find that $(\Uq(\mathfrak{g}'), \mathcal{B}_{\textbf{c}, \textbf{s}})$ has simple generators.
\end{proof}

\begin{lemma} \label{q-radial_lem_adEi_alphaij}
Take $i,j \in I$, then $\ad(E_i)(E_j) = 0$ if and only if $a_{ij} = (\alpha_i, \alpha_j) = 0$.
\end{lemma}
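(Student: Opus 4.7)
The plan is to compute $\ad(E_i)(E_j)$ in closed form and then analyze case by case when it vanishes. Using $\Delta(E_i) = E_i \tensor 1 + K_i \tensor E_i$, $S(E_i) = -K_i^{-1}E_i$, and the commutation rule $K_i E_j = q^{(\alpha_i,\alpha_j)} E_j K_i$ recorded just after \eqref{q-radial_eqn_KinQ}, the definition of the adjoint action collapses to
\begin{equation*}
\ad(E_i)(E_j) \;=\; E_i E_j + K_i E_j S(E_i) \;=\; E_i E_j - q^{(\alpha_i, \alpha_j)} E_j E_i.
\end{equation*}
Because $\epsilon_i \in \NN$ is positive and $(\alpha_i,\alpha_j) = \epsilon_i a_{ij}$, the conditions $a_{ij}=0$ and $(\alpha_i,\alpha_j)=0$ are equivalent, so it remains to establish the equivalence with the vanishing of the right-hand side above.

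For the ``if'' direction, assume $a_{ij}=0$. Then $1-a_{ij}=1$, and the quantum Serre relation $F_{ij}(E_i,E_j)=0$ reads $E_iE_j - E_jE_i = 0$; combined with $q^{(\alpha_i,\alpha_j)} = 1$, this immediately yields $\ad(E_i)(E_j)=0$.

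For the converse I would split into two cases. When $i=j$, one has $(\alpha_i,\alpha_i) = 2\epsilon_i \neq 0$, so $\ad(E_i)(E_i) = (1-q^{2\epsilon_i})E_i^2$; this is nonzero because $q$ is transcendental over $\CC$ and $E_i^2 \neq 0$ by the triangular decomposition \cite[\S 3.2]{Lus94}, consistent with $a_{ii}=2\neq 0$. When $i \neq j$ with $a_{ij}\neq 0$, it suffices to show that $E_iE_j$ and $E_jE_i$ are linearly independent in $U^+$, for then no combination $E_iE_j - c E_jE_i$ with $c\neq 0$ can vanish. The one nontrivial input, and the only real obstacle in the argument, is to verify that the Serre relation $F_{ij}(E_i,E_j)=0$ imposes no identity at weight $\alpha_i+\alpha_j$: its total degree in $E_i$ is $1-a_{ij}\geq 2$, so it lives strictly above $\alpha_i+\alpha_j$ in the $Q^+$-grading of $U^+$. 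Hence $\{E_iE_j,\,E_jE_i\}$ is part of a PBW basis of the two-dimensional weight space $U^+_{\alpha_i+\alpha_j}$, and the required linear independence follows.
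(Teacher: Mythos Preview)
Your proof is correct and follows the same strategy as the paper: compute $\ad(E_i)(E_j) = E_iE_j - q^{(\alpha_i,\alpha_j)}E_jE_i$ explicitly from the coproduct and antipode, then analyze when it vanishes via the Serre relations. Your converse direction is actually more carefully argued than the paper's (which simply asserts that a relation $E_iE_j = q^{\pm(\alpha_i,\alpha_j)}E_jE_i$ ``can only follow from the quantum Serre relations if $(\alpha_i,\alpha_j)=0$''), making explicit both the $i=j$ case and the weight-space reason why no Serre relation touches $U^+_{\alpha_i+\alpha_j}$ when $a_{ij}<0$.
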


\begin{proof}
Since $\Delta(E_i) = E_i \tensor 1 + K_i \tensor E_i$ we find that $\ad(E_i)(E_j)$ is
\begin{equation} \label{q-radial_eqn_adEiEj}
\sum_{(E_i)} (E_i)_{(1)} E_j S((E_i)_{(2)})
  = E_i E_j + K_i E_j S(E_j)
  = E_i E_j - q^{-(\alpha_i, \alpha_j)} E_j E_i.
\end{equation}
If $(\alpha_i, \alpha_j) = 0$, then from the quantum Serre relations \eqref{q-radial_eqn_adEiEj} is zero.
Suppose \eqref{q-radial_eqn_adEiEj} is zero, then $E_i E_j = q^{-(\alpha_i, \alpha_j)} E_j E_i$.
This can only follow from the quantum Serre relations if $(\alpha_i, \alpha_j) = 0$
\end{proof}

Proposition \ref{q-radial_prop_simple_B} states that a quantum symmetric pair has simple generators if and only if for every $i \in I \backslash X$ the $\ad(\mathcal{M}_X)$-module $\ad(\mathcal{M}_X)(E_i)$ is one-dimensional.
Suppose $X \neq \emptyset$, then $\ad(\mathcal{M}_X)(E_i)$ is one-dimensional for all $i \in I \backslash X$ if and only if $\ad(E_j)(E_i) = 0$ for all $j \in X$.
By Lemma \ref{q-radial_lem_adEi_alphaij}, $\ad(E_j)(E_i) = 0$ if and only if $a_{ij} = 0$.
This observation gives the following Corollary.

\begin{corollary} \label{q-radial_cor_alternative_simple_B}
Let $(X, \tau)$ be an admissible pair for quantum symmetric pair $(\Uq(\mathfrak{g}'), \mathcal{B}_{\textbf{c}, \textbf{s}})$.
The quantum symmetric pair $(\Uq(\mathfrak{g}'), \mathcal{B})$ has simple generators if and only if for all $i \in X$ and $j \in I \backslash X$ we have $\alpha_{i,j} = (\alpha_i, \alpha_j) = 0$.
In particular, if $X = \emptyset$, then $(\Uq(\mathfrak{g}'), \mathcal{B}(\emptyset, \tau))$ has simple generators. 
\end{corollary}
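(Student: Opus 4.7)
The plan is to derive the corollary directly from Proposition \ref{q-radial_prop_simple_B} and Lemma \ref{q-radial_lem_adEi_alphaij}. By the proposition, the quantum symmetric pair has simple generators if and only if $\ad(\mathcal{M}_X)(E_j)$ is one-dimensional for every $j \in I \backslash X$, so I would proceed by characterizing this one-dimensionality condition in terms of the inner products $(\alpha_i, \alpha_j)$ for $i \in X$. Recall that $\mathcal{M}_X$ is generated by $E_i$, $F_i$, $K_i^{\pm 1}$ for $i \in X$, so the $\ad(\mathcal{M}_X)$-orbit of $E_j$ is obtained by repeatedly applying $\ad(E_i)$, $\ad(F_i)$, and $\ad(K_i^{\pm 1})$.

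For the forward direction I would argue by weights. The element $\ad(E_i)(E_j)$ has weight $\alpha_i + \alpha_j$, which is distinct from the weight $\alpha_j$ of $E_j$ because the simple roots are linearly independent. Hence one-dimensionality of $\ad(\mathcal{M}_X)(E_j) = \CC(q) E_j$ already forces $\ad(E_i)(E_j) = 0$ for every $i \in X$. Lemma \ref{q-radial_lem_adEi_alphaij} then immediately yields $(\alpha_i, \alpha_j) = 0$.

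For the converse, Lemma \ref{q-radial_lem_adEi_alphaij} gives $\ad(E_i)(E_j) = 0$ under the assumed inner-product condition. A short direct computation from $\Delta(F_i) = F_i \tensor K_i^{-1} + 1 \tensor F_i$ and $S(F_i) = -F_i K_i$ shows that $\ad(F_i)(E_j) = [F_i, E_j] K_i$, which vanishes for $i \in X$, $j \in I \backslash X$ since then $i \neq j$. Because the Cartan generators $K_h$ act on $E_j$ by the scalar $q^{\alpha_j(h)}$, the submodule $\ad(\mathcal{M}_X)(E_j)$ collapses to $\CC(q) E_j$, which is one-dimensional. Proposition \ref{q-radial_prop_simple_B} then concludes that $(\Uq(\mathfrak{g}'), \mathcal{B}_{\textbf{c}, \textbf{s}})$ has simple generators. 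The $X = \emptyset$ case is trivial: the inner-product condition is vacuous, so the equivalence delivers simple generators automatically.

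I do not anticipate any significant obstacle; the bulk of the work is already packaged in Proposition \ref{q-radial_prop_simple_B} and Lemma \ref{q-radial_lem_adEi_alphaij}. The only item that requires mild care is the short verification that $\ad(F_i)(E_j)$ vanishes, together with the observation that the Cartan generators act diagonally on weight vectors so that they contribute nothing new to the $\ad(\mathcal{M}_X)$-orbit of $E_j$.
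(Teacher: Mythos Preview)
Your proposal is correct and follows essentially the same route as the paper: reduce to Proposition \ref{q-radial_prop_simple_B}, then use Lemma \ref{q-radial_lem_adEi_alphaij} to translate one-dimensionality of $\ad(\mathcal{M}_X)(E_j)$ into the vanishing condition $(\alpha_i,\alpha_j)=0$. In fact you supply more detail than the paper does on the converse direction (the paper simply asserts the equivalence between one-dimensionality and $\ad(E_i)(E_j)=0$, whereas you explicitly verify that $\ad(F_i)$ and $\ad(K_h)$ contribute nothing new).
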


Not all right coideals $\mathcal{B}_{\textbf{c}, \textbf{s}}$ are suitable quantum analogues for $\mathcal{U}(\mathfrak{k}')$.
Kolb \cite[Lemma 5.3, 5.4 \& 5.5]{Kol14} showed that for $\mathcal{B}_{\textbf{c}, \textbf{s}}$ to be a suitable quantum analogue for $\mathcal{U}(\mathfrak{k}')$ we need restrictions on $\textbf{c}$ and $\textbf{s}$.
Define
\begin{equation*}
I_{\textrm{ns}} = \{ i \in I \backslash X : \tau(i) = i, \alpha_i(h_j) = 0 \text{ for all } j \in X \},
\end{equation*}
then $\mathcal{B}_{\textbf{c}, \textbf{s}}$ is a suitable quantum analogue for $\mathcal{U}(\mathfrak{k}')$ if $\textbf{c} \in \mathcal{C}$ and $\textbf{s} \in \mathcal{S}$, where
\begin{equation*}
\begin{split}
\mathcal{C} &= \{ \textbf{c} \in (\CC(q)^{\times})^{|I \backslash X|} : c_i = c_{\tau(i)} \text{ if } \tau(i) \neq i \text{ and } (\alpha_i, \Theta(\alpha_i)) = 0 \}, \\
\mathcal{S} &= \{ \textbf{s} \in (\CC(q))^{|I \backslash X|} : \text{if } s_i \neq 0 \text{ then } i \in I_{\textrm{ns}} \text{ and } a_{ij} \in -2\NN_0 \text{ for all } j \in I_{\textrm{ns}} \backslash \{i\} \}.
\end{split}
\end{equation*}

The construction of the quantum symmetric pair subalgebra $\mathcal{B}_{\textbf{c}, \textbf{s}}$ seems to be very artificial.
However if $\textbf{c} \in \mathcal{C}$ and $\textbf{s} \in \mathcal{S}$, then $\mathcal{B}_{\textbf{c}, \textbf{s}}$ specializes to $\mathrm{U}(\mathfrak{k})$ for $q = 1$ and $\mathcal{B}_{\textbf{c}, \textbf{s}}$ is maximal with this property.
Letzter \cite[Theorem 5.8]{Let99}, \cite[Theorem 7.5]{Let03} showed that for finite dimensional $\mathfrak{g}$ any maximal coideal subalgebra of $\Uq(\mathfrak{g})$ that specializes to $\mathrm{U}(\mathfrak{k})$ for $q = 1$ is equivalent to a right coideal $\mathcal{B}_{\textbf{c}, \textbf{s}}$, see also Kolb \cite[Remark 5.7 and Section 10]{Kol14}.

For the rest of this section we assume $\textbf{c} \in \mathcal{C}$ and $\textbf{s} \in \mathcal{S}$.
We classify all quantum symmetric pairs with simple generators for semi-simple Lie algebras using the theory developed by Araki \cite{Ara62} and Letzter \cite{Let03}.
After the classification we work out a couple of these examples.
In the last example we work out the quantized universal enveloping algebra of affine $\hat{\mathfrak{sl}}_2$, which is infinite dimensional and hence not semi-simple.
For all the examples below we have $X = \emptyset$.

\begin{example}
Every symmetric pair $(G, K)$ of a compact Lie group $G$ gives rise to a symmetric pair $(\mathfrak{g}, \mathfrak{k})$ of semi-simple Lie algebras related to an admissible pair $(X, \tau)$.
Araki \cite{Ara62} gives the classification for all admissible pairs $(X, \tau)$ of symmetric compact finite dimensional Lie groups, see also \cite[Chapter X]{Hel78}.
Each of the admissible pairs $(X, \tau)$ of \cite{Ara62} gives rise to a quantum involution $\theta_q(X, \tau)$, see \cite[Definition 4.3]{Kol14}, from which we can construct a quantum symmetric pair $(\Uq(\mathfrak{g}), \mathcal{B})$.
By a case by case check on the list of Araki \cite{Ara62} we give all admissible pairs related to a symmetric pair $(G, K)$ of compact groups, using Corollary \ref{q-radial_cor_alternative_simple_B}, corresponding to a quantum symmetric pair with simple generators.
\begin{enumerate}
\item Let $\mathfrak{g}$ be a simple Lie algebra generated by $\{e_i, f_i, h_i : 1 \leq i \leq m\}$.
Take two copies $\mathfrak{g}_1$ and $\mathfrak{g}_2$ of $\mathfrak{g}$ and label the generators of $\mathfrak{g}_1$ by $\{e_i, f_i, h_i : 1 \leq i \leq m\}$ and label the generators of $\mathfrak{g}_2$ by $\{e_{i+m}, f_{i+m}, h_{i+m} : 1 \leq i \leq m\}$, take $\mathfrak{g} = \mathfrak{g}_1 \oplus \mathfrak{g}_2$.
The intertwiner on $\mathfrak{g}_1 \oplus \mathfrak{g}_2$ is given by $\theta : e_i, f_i, h_i \mapsto e_{i+m}, f_{i+m}, h_{i+m}$ and $\theta : e_{i+m}, f_{i+m}, h_{i+m} \mapsto e_i, f_i, h_i$, for $1 \leq i \leq m$.
Then $(\mathfrak{g}_1 \oplus \mathfrak{g}_2, \diag)$ correspond to the quantum symmetric pair $(\Uq(\mathfrak{g}_1 \oplus \mathfrak{g}_2) \simeq \Uq(\mathfrak{g}_1) \tensor \Uq(\mathfrak{g}_2), \mathcal{B}_{\textbf{c}, \textbf{s}})$.
Every right coideal for the quantum symmetric pair related to admissible pair $(X, \tau)$ is  equivalent to $\mathcal{B}_{\textbf{1}, \textbf{0}}$.
See Examples \ref{q-radial_exm_A1xA1} and \ref{q-radial_exm_A2xA2} for quantum symmetric pairs related to $(\SU(2) \times \SU(2), \diag)$ and $(\SU(3) \times \SU(3), \diag)$.
\item Type AI, where we have $I = \{1, 2, \ldots, r\}$ and $\tau = \id$.
In this case every right coideal $\mathcal{B}$ for quantum symmetric pair of type AI is equivalent to $\mathcal{B}_{\textbf{1}, \textbf{0}}$.
\item Type AIII, case $2$, where $I = \{1, 2, \ldots, r\}$ for odd $r = 2\ell + 1$, $\ell \in \NN$.
The permutation $\tau$ is defined by $i \mapsto r-i+1$.
Every right coideal $\mathcal{B}$ for quantum symmetric pair of type AIII is equivalent to $\mathcal{B}_{1,\textbf{s}}$, where $\textbf{s} = (0, 0, \ldots, 0, s, 0, \ldots, 0)$ with an $s \in \CC(q)$ on entry $\ell$ of $\textbf{s}$.

\begin{center}
\begin{tikzpicture}
\draw (-1, 0) node[anchor=east] {AIII :};

\foreach \x in {0,1,2} {
  \def \y {\number\numexpr\x+1\relax};
  \draw (\x,0.5) circle (.1cm) node[label={[label distance=.1cm]90:$\alpha_{\y}$}] {};
  \draw (\x,-0.5) circle (.1cm) node[label={[label distance=.1cm]-90:\ifthenelse{\x=0}{$\alpha_r$}{$\alpha_{r-\x}$}}] {};
  \draw[shorten >= 5pt, shorten <= 5pt] (\x,0.5) to (\x+1,0.5);
  \draw[shorten >= 5pt, shorten <= 5pt] (\x,-0.5) to (\x+1,-0.5);
  \draw[shorten >= 5pt, shorten <= 5pt, <->] (\x,0.5) to [bend right=30] (\x,-0.5);
}

\draw[loosely dotted] (3,0.5) to (3.5,0.5);
\draw[loosely dotted] (3,-0.5) to (3.5,-0.5);
  
\draw (4.5,0.5) circle (.1cm) node[label={[label distance=.1cm]90:$\alpha_{\ell-1}$}] {};
\draw (4.5,-0.5) circle (.1cm) node[label={[label distance=.1cm]-90:$\alpha_{\ell+1}$}] {};
\draw[shorten >= 5pt, shorten <= 5pt] (3.5,0.5) to (3.5+1,0.5);
\draw[shorten >= 5pt, shorten <= 5pt] (3.5,-0.5) to (3.5+1,-0.5);
\draw[shorten >= 5pt, shorten <= 5pt, <->] (4.5,0.5) to [bend right=30] (4.5,-0.5);

\draw (5.5,0) circle (.1cm) node[label={[label distance=.1cm]0:$\alpha_{\ell}$}] {};
\draw[shorten >= 5pt, shorten <= 5pt] (4.5,0.5) to (5.5,0);
\draw[shorten >= 5pt, shorten <= 5pt] (4.5,-0.5) to (5.5,0);

\draw (-0.5,0) node {$\tau$};

\end{tikzpicture}
\end{center}

\item Type AIV when $r=1,2$, i.e. the quantum symmetric pairs related to $(\SU(2), \mathrm{U}(1))$ and $(\SU(3), \mathrm{U}(2))$.

For $r = 1$ we have $I = \{1\}$ and $\tau = \id$.
The right coideal $\mathcal{B}$ for the quantum symmetric pair is equivalent to $\mathcal{B}_{1,s}$ for $s \in \CC(q)$.
See Example \ref{q-radial_exm_koornwinder1}.

For $r = 2$ we have $I = \{1, 2\}$ and $\tau = (1\, 2)$.
The right coideal $\mathcal{B}$ for the quantum symmetric pair is equivalent to $\mathcal{B}_{\textbf{c}, \textbf{0}}$ where $\textbf{c} = (c, c)$ for $c \in \CC(q)^{\times}$.
See also Example \ref{q-radial_exm_uqsl3}.
\item Type BI, for $\ell = r$, so that $I = \{1, 2, \ldots, r\}$ and $\tau = \id$.
Every right coideal for the quantum symmetric pair of type BI is equivalent to $\mathcal{B}_{\textbf{1}, \textbf{0}}$.
\item Type BII, for $r = 1$, so that $I = \{1\}$ and $\tau = \id$.
Every right coideal for the quantum symmetric pair of type BII with $r = 1$ is equivalent to $\mathcal{B}_{1,0}$.
\item Type CI, where $I = \{1, 2, \ldots, r\}$ and $\tau = \id$.
Every right coideal for the quantum symmetric pair of type CI is equivalent to $\mathcal{B}_{\textbf{1}, \textbf{s}}$ where $\textbf{s} = (0,0,\ldots,0,s)$ with $s \in \CC(q)$.
\item Type DI, case 2, where $r \geq 3$, so that $I = \{1, 2, \ldots, r\}$ and $\tau = ((r-1)\,r)$.
Every right coideal for the quantum symmetric pair of type DI, case 2, is equivalent to $\mathcal{B}_{\textbf{1}, \textbf{0}}$.

\begin{center}
\begin{tikzpicture}
\draw (-1, 0) node[anchor=east] {DI.1 :};

\foreach \x in {0,1,2} {
  \def \y {\number\numexpr\x+1\relax};
  \draw (\x,0) circle (.1cm) node[label={[label distance=.1cm]90:$\alpha_{\y}$}] {};
  \draw[shorten >= 5pt, shorten <= 5pt] (\x,0) to (\x+1,0);
}

\draw[loosely dotted] (3,0) to (3.5,0);

\draw[shorten >= 5pt, shorten <= 5pt] (3.5,0) to (4.5,0);
\draw (4.5,0) circle (.1cm) node[label={[label distance=.1cm]90:$\alpha_{r-2}$}] {};

\draw (5.5,0.5) circle (.1cm) node[label={[label distance=.1cm]90:$\alpha_{r-1}$}] {};
\draw[shorten >= 5pt, shorten <= 5pt] (4.5,0) to (5.5,0.5);

\draw (5.5,-0.5) circle (.1cm) node[label={[label distance=.1cm]-90:$\alpha_{r}$}] {};
\draw[shorten >= 5pt, shorten <= 5pt] (4.5,0) to (5.5,-0.5);

\draw[shorten >= 5pt, shorten <= 5pt, <->] (5.5,0.5) to [bend left=30] (5.5,-0.5);

\draw (6,0) node {$\tau$};
\end{tikzpicture}
\end{center}

\item Type DI, case 3, where $r \geq 4$, so that $I = \{1, 2, \ldots, r\}$ and $\tau = \id$.
Every right coideal for the quantum symmetric pair of type DI, case 3, is equivalent to $\mathcal{B}_{\textbf{1}, \textbf{0}}$.
\item Type EI, where $I = \{1, 2, \ldots, 6\}$ and $\tau = \id$.
Every right coideal for the quantum symmetric pair of type EI is equivalent to $\mathcal{B}_{\textbf{1}, \textbf{0}}$.
\item Type EII, where $I = \{1, 2, \ldots, 6\}$ and $\tau = (1\,6)(3\,5)$.
Every right coideal for the quantum symmetric pair of type EII is equivalent to $\mathcal{B}_{\textbf{1}, \textbf{0}}$.

\begin{center}
\begin{tikzpicture}
\draw (-1, 0) node[anchor=east] {EII :};

\draw (0,0) circle (.1cm) node[label={[label distance=.1cm]90:$\alpha_{1}$}] {};
\draw [shorten >= 5pt, shorten <= 5pt] (0,0) to (1,0);
\draw (1,0) circle (.1cm) node[label={[label distance=.1cm]90:$\alpha_{3}$}] {};
\draw [shorten >= 5pt, shorten <= 5pt] (1,0) to (2,0);
\draw (2,0) circle (.1cm) node[label={[label distance=0cm]-90:$\alpha_{4}$}] {};
\draw [shorten >= 5pt, shorten <= 5pt] (2,0) to (3,0);
\draw [shorten >= 5pt, shorten <= 5pt] (2,0) to (2,1);
\draw (3,0) circle (.1cm) node[label={[label distance=.1cm]90:$\alpha_{5}$}] {};
\draw [shorten >= 5pt, shorten <= 5pt] (3,0) to (4,0);
\draw (4,0) circle (.1cm) node[label={[label distance=.1cm]90:$\alpha_{6}$}] {};
\draw (2,1) circle (.1cm) node[label={[label distance=.1cm]90:$\alpha_{2}$}] {};

\draw[shorten >= 5pt, shorten <= 5pt, <->] (1,0) to [bend right=90] (3,0);
\draw[shorten >= 5pt, shorten <= 5pt, <->] (0,0) to [bend right=90] (4,0);
\draw (2,-1.5) node {$\tau$};

\end{tikzpicture}
\end{center}

\item Type EV, where $I = \{1, 2, \ldots, 7\}$ and $\tau = \id$.
Every right coideal for the quantum symmetric pair of type EV is equivalent to $\mathcal{B}_{\textbf{1}, \textbf{0}}$.
\item Type EVIII, where $I = \{1, 2, \ldots, 8\}$ and $\tau = \id$.
Every right coideal for the quantum symmetric pair of type EVIII is equivalent to $\mathcal{B}_{\textbf{1}, \textbf{0}}$.
\item Type FI, where $I = \{1, 2, 3, 4\}$ and $\tau = \id$.
Every right coideal for the quantum symmetric pair of type FI is equivalent to $\mathcal{B}_{\textbf{1}, \textbf{0}}$.
\item Type G, where $I = \{1, 2\}$ and $\tau = \id$.
Every right coideal for the quantum symmetric pair of type G is equivalent to $\mathcal{B}_{\textbf{1}, \textbf{0}}$.
\end{enumerate}
\end{example}

\begin{example}[Type $A_1$] \label{q-radial_exm_koornwinder1}
Consider quantum symmetric pairs $(\Uq(\mathfrak{sl}_2), \mathcal{B}_{c,s})$ for type AIV of Araki \cite{Ara62}.
$\Uq(\mathfrak{sl}_2)$ is generated by $E, F$ and $K^{\pm 1}$.
The admissible pair is $(\emptyset, \id)$ and the right coideal $\mathcal{B}_{1,s}$, where $s \in \CC(q)$, is generated by
\begin{equation*}
B = F - E K^{-1} + s K^{-1}.
\end{equation*}
Therefore the quantum pair $(\mathcal{U}_q(\mathfrak{sl}_2), \mathcal{B}_{1,s})$ corresponding to Gelfand pair $(\SU(2), \mathrm{U}(1))$ has simple generators.
The quantum torus $\mathcal{A}$ is generated by $K^{\pm 1}$ and therefore we call $(\Uq(\mathfrak{sl}_2), \mathcal{B}_{1,s})$ a pair of rank $1$.
Fix $c \in \CC(q)^{\times}$ and $s \in \CC(q)$ and take Hopf algebra isomorphism $\phi : E, F, K_{\mu} \mapsto c^{-\frac{1}{2}} E, c^{\frac{1}{2}} F, K_{\mu}$.
We see that $\phi(\mathcal{B}_{c,s}) = \mathcal{B}_{1, c^{\frac{1}{2}} s}$, hence the right coideal $\mathcal{B}_{c,s}$ is equivalent to the right coideal $\mathcal{B}_{1,t}$ for some $t \in \CC(q)$.
This case has been studied first by Koornwinder \cite{Koo93}. 
\end{example}

\begin{example}[Type $A_1 \times A_1$] \label{q-radial_exm_A1xA1}
Let $\mathfrak{g} = \mathfrak{sl}_2 \oplus \mathfrak{sl}_2$.
Consider the quantum symmetric pairs $(\Uq(\mathfrak{g}), \mathcal{B})$ related to $(\SU(2)\times\SU(2), \diag)$. 
The admissible pair is given by $(\emptyset, (1\,2))$.
A quick computation gives $\mathcal{C} = \{(c,c) : c \in \CC(q)^{\times} \}$ and $\mathcal{S} = \{ (0,0) \}$.
For $\textbf{c} = (1,1)$ and $\textbf{s} = 0$, the right coideal $\mathcal{B}$ is generated by
\begin{equation*}
B_1 = F_1 - E_2 K_1^{-1}, \quad B_2 = F_2 - E_1 K_2^{-1}, \quad K^{\pm 1} = (K_1 K_2^{-1})^{\pm 1}.
\end{equation*}
Therefore $(\Uq(\mathfrak{g}), \mathcal{B})$ is a quantum symmetric pair with simple generators.
The quantum torus $\mathcal{A}$ is generated by $(K_1 K_2)^{\pm 1}$ and hence we call $(\Uq(\mathfrak{g}), \mathcal{B})$ a pair of rank $1$.
Let $\textbf{c} \in \mathcal{C}$ be arbitrary and define Hopf algebra isomorphism $\phi : E_i, F_i, K_{\mu} \mapsto c^{-\frac{1}{2}} E_i, c^{\frac{1}{2}} F_i, K_{\mu}$.
We see that $\phi(\mathcal{B}_{\textbf{c}, (0,0)}) = \mathcal{B}_{(1,1), (0,0)}$ and therefore the right coideal $\mathcal{B}_{\textbf{c}, (0,0)}$ is equivalent to the right coideal $\mathcal{B}_{(1,1), (0,0)}$.
Aldenhoven, Koelink and Rom\'an study this case in \cite{AKR15}.
\end{example}

\begin{example}[Type $A_2$] \label{q-radial_exm_uqsl3}
Let $\mathfrak{g} = \mathfrak{sl}_3$.
Take the quantum symmetric pair $(\Uq(\mathfrak{g}), \mathcal{B})$ related to $(\SU(3), \mathrm{U}(2))$.
The quantized universal enveloping algebra $\Uq(\mathfrak{g})$ is generated by $E_1, E_2, F_1, F_2, K_1$ and $K_2$.
The admissible pair is $(\emptyset, (1\,2))$ and $\mathcal{C} = \CC(q)^{\times} \times \CC(q)^{\times}$, $\mathcal{S} = \{(0,0)\}$.
Let $c_1, c_2 \in \CC(q)^{\times}$ and take right coideal $\mathcal{B}_{(c_1, c_2)}$ generated by
\begin{equation*}
B_1 = F_1 - c_1 E_1 K_1^{-1}, \quad B_2 = F_2 - c_2 E_2 K_1^{-1}, \quad K^{\pm 1} = (K_1 K_2^{-1})^{\pm 1}.
\end{equation*}
Therefore $(\Uq(\mathfrak{g}), \mathcal{B}_{(c_1, c_2)})$ is a quantum symmetric pair with simple generators.
The quantum torus $\mathcal{A}$ is generated by $(K_1 K_2)^{\pm 1}$, hence we call $(\Uq(\mathfrak{g}), \mathcal{B}_{(c_1, c_2)})$ a pair of rank $1$.
Let $\phi$ be the Hopf algebra isomorphism defined on the generators by $\phi(E_1) = c_1^{-1} E_1$, $\phi(F_1) = c_1 F_1$ and as the identity on the other generators.
We see $\phi(\mathcal{B}_{(c_1, c_2)}) = \mathcal{B}_{(1, c_1^{-1} c_2)}$, hence every right coideal $\mathcal{B}_{(c_1, c_2)}$ is equivalent to a right coideal $\mathcal{B}_{(1, d)}$ for $d \in \CC(q)$.
So the quantum symmetric pair $(\Uq(\mathfrak{g}), \mathcal{B})$ related to $(\SU(3), \mathrm{U}(2))$ has essentially one free parameter.
\end{example}

\begin{example}[Type $A_2 \times A_2$] \label{q-radial_exm_A2xA2}
Let $\mathfrak{g} = \mathfrak{sl}_3 \oplus \mathfrak{sl}_3$.
Consider the quantum symmetric pair $(\Uq(\mathfrak{g}), \mathcal{B})$ related to $(\SU(3)\times\SU(3), \diag)$. 
The admissible pair is given by $(\emptyset, \tau)$ where $\tau = (1\,3)(2\,4)$.
We have $\mathcal{C} = \{(c_1, c_2, c_1, c_2) : c_1, c_2 \in \CC(q)^{\times} \}$ and $\mathcal{S} = \{ (0,0,0,0) \}$.
Therefore $\Theta(\emptyset, \tau)$ maps $\alpha_i \mapsto -\alpha_{\tau(i)}$ and for $\textbf{c} = \textbf{1} = (c_1,c_2,c_1,c_2) \in \mathcal{C}$ and $\textbf{s} = \textbf{0} = (0,0,0,0)$, the right coideal $\mathcal{B}_{\textbf{c}, \textbf{0}}$ is generated by
\begin{equation*}
B_1 = F_1 - c_1 E_3 K_1^{-1}, \quad B_2 = F_2 - c_2 E_4 K_2^{-1}, \quad B_3 = F_3 - c_1 E_1 K_3^{-1}, \quad B_4 = F_4 - c_2 E_2 K_4^{-1},
\end{equation*}
$(K_1 K_3^{-1})^{\pm 1}$ and $(K_2 K_4^{-1})^{\pm 1}$. 
Therefore $(\Uq(\mathfrak{sl}_3) \tensor \Uq(\mathfrak{sl}_3), \mathcal{B}_{\textbf{c},\textbf{0}})$ is a quantum symmetric pair with simple generators.
The quantum torus $\mathcal{A}$ is generated by $(K_1 K_3)^{\pm 1}$ and $(K_2 K_4)^{\pm 1}$, hence we call $(\Uq(\mathfrak{g}), \mathcal{B}_{\textbf{c},\textbf{0}})$ a pair of rank $2$.
Define the Hopf algebra isomorphism $\phi$ defined on the generators by $\phi(E_i) = c_1^{-\frac{1}{2}} E_i$, $\phi(F_i) = c_1^{\frac{1}{2}} F_i$ if $i = 1, 3$ and $\phi(E_i) = c_2^{-\frac{1}{2}} E_i$, $\phi(F_i) = c_2^{\frac{1}{2}} F_i$ for $i = 2, 4$.
We have $\phi(\mathcal{B}_{\textbf{c}, \textbf{0}}) = \mathcal{B}_{\textbf{1}, \textbf{0}}$, where $\textbf{1} = (1,1,1,1)$.
Hence the right coideal $\mathcal{B}_{\textbf{c}, \textbf{0}}$ with $\mathbf{c} \in \mathcal{C}$ is equivalent to $\mathcal{B}_{\textbf{1}, \textbf{0}}$.
\end{example}

\begin{example}[Quantum affine $\widehat{\mathfrak{sl}}_2$ and the $q$-Onsager algebra]
An example of a quantum symmetric pair not related to a symmetric pair of semi-simple Lie algebras is the quantum affine $\widehat{\mathfrak{sl}}_2$ and its $q$-Onsager subalgebra \cite{BB10, BK05}.
The embedding of the $q$-Onsager algebra in quantum affine $\widehat{\mathfrak{sl}}_2$ can be found in \cite[Proposition 1.13]{IT10}, see also \cite[\S 2]{Kol14}. 
The generalized Cartan matrix of affine Lie algebra  $\widehat{\mathfrak{sl}}_2(\CC)$ is given by $A = \begin{pmatrix} 2 & -2 \\ -2 & 2 \end{pmatrix}$ with $I = \{0, 1\}$.
The algebra $\Uq(\widehat{\mathfrak{sl}}_2)$ is called quantum affine $\widehat{\mathfrak{sl}}_2$.
We take the admissible pair $(X, \tau) = (\emptyset, \id)$, then $\mathcal{C} = \CC(q)^{\times} \times \CC(q)^{\times}$ and $\mathcal{S} = \{ (0,0) \}$
For any $\textbf{c} = (c_1, c_2) \in \mathcal{C}$, $\textbf{s} = (0,0)$ the right coideal $\mathcal{B}_{\textbf{c}, \textbf{s}}$ is generated by the simple elements
\begin{equation*}
B_1 = F_1 - c_1 E_1 K_1^{-1}, \quad B_2 = F_2 - c_2 E_2 K_2^{-1},
\end{equation*}
for $i \in I$.
The pair $(\Uq(\widehat{\mathfrak{sl}}_2), \mathcal{B}_{\textbf{c}, \textbf{s}})$ is a quantum symmetric pair with simple generators, see also \cite[Example 7.6]{Kol14}.
The quantum torus $\mathcal{A}$ is generated by $K_1^{\pm 1}$ and $K_2^{\pm 1}$ so that we call $(\Uq(\widehat{\mathfrak{sl}}_2), \mathcal{B}_{\textbf{c}, \textbf{s}})$ of rank $2$.
The right coideal $\mathcal{B}_{\textbf{c}, \textbf{s}}$ is isomorphic with the $q$-Onsager algebra.
In \cite{Kol13} Kolb studies the radial part of the Casimir of the Onsager algebra $\widehat{\mathfrak{sl}}_2$ in the fashion of Casselman and Mili\v{c}i{\'c} \cite{CM82}.
One can wonder whether the results achieved in \cite{Kol13} can be extended to quantum affine $\hat{\mathfrak{sl}}_2$ using Theorem \ref{q-radial_thm_main}.
\end{example}

\section{Quantum infinitesimal Cartan decomposition} \label{q-radial_sec_quantuminfcartandecomp}

In this section and Section \ref{q-radial_sec_sphericalfunctions} we assume that $\textbf{c}$ and $\textbf{s}$ are arbitrary finite sequences of elements of $\CC(q)$.
We do not assume in general $\textbf{c} \in \mathcal{C}$ or $\textbf{s} \in \mathcal{S}$.

\begin{lemma} \label{q-radial_lem_KinBA}
Let $(\mathcal{U}_q(\mathfrak{g}'), \mathcal{B}_{\textbf{c},\textbf{s}}$) be a quantum symmetric pair.
If $\alpha \in Q$ and $K \in \check{\mathcal{A}} = \check{A}_{\Theta}$ an element of the quantum torus, then $K_{\alpha} K \in \check{\mathcal{B}}_{\textbf{c}, \textbf{s}} \check{\mathcal{A}}$
\end{lemma}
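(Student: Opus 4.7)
The plan is to decompose $K_\alpha$ into a product of two $K$-elements: one fixed by $\Theta$ (which sits inside $\check{\mathcal{B}}$) and one negated by $\Theta$ (which sits inside $\check{\mathcal{A}}$). Since $K_\alpha K$ already has the trailing factor $K$ in $\check{\mathcal{A}}$, this decomposition plus the fact that $\check{\mathcal{A}}$ is closed under multiplication will immediately give the result.

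Concretely, for $\alpha \in Q$ set
\begin{equation*}
\beta = \tfrac{1}{2}(\alpha + \Theta(\alpha)), \qquad \gamma = \tfrac{1}{2}(\alpha - \Theta(\alpha)),
\end{equation*}
so that $\alpha = \beta + \gamma$. I would check that $\Theta(\beta) = \beta$ and $\Theta(\gamma) = -\gamma$, which is immediate from $\Theta^2 = \id$ on $\mathfrak{h}^*$ (a consequence of condition (1) in the definition of an admissible pair together with $w_X$ being an involution on $\mathfrak{h}_X^*$). Because $\alpha \in Q$ and $\Theta$ preserves $Q$, both $\alpha + \Theta(\alpha)$ and $\alpha - \Theta(\alpha)$ lie in $Q$, so $\beta, \gamma \in \tfrac{1}{2}Q \subseteq \check{Q}$.

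Next, using the algebra isomorphism $\CC(q)[\check{Q}] \to \check{U}^{0\prime}$ the factorization $\alpha = \beta + \gamma$ translates into $K_\alpha = K_\beta K_\gamma$. By construction $K_\beta \in \check{U}^{0\prime}_\Theta$, and by Definition \ref{q-radial_dfn:simplegenerator} we have $\check{U}^{0\prime}_\Theta \subseteq \check{\mathcal{B}}_{\mathbf{c}, \mathbf{s}}$. Similarly $K_\gamma \in \check{\mathcal{A}}$. Hence
\begin{equation*}
K_\alpha K \;=\; K_\beta \, K_\gamma \, K,
\end{equation*}
where $K_\beta \in \check{\mathcal{B}}_{\mathbf{c}, \mathbf{s}}$ and $K_\gamma K \in \check{\mathcal{A}}$ since $\check{\mathcal{A}}$ is a subalgebra. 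This exhibits $K_\alpha K$ as an element of $\check{\mathcal{B}}_{\mathbf{c}, \mathbf{s}} \check{\mathcal{A}}$, as required.

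There is no real obstacle: the only subtlety is making sure the halving in the definitions of $\beta$ and $\gamma$ is allowed, which is precisely why the paper introduced the ``checked'' versions $\check{P}^{\vee}, \check{Q}^{\vee}, \check{Q}$ and the extension $\check{\mathcal{U}}_q(\mathfrak{g}')$ in the first place. The result can therefore be read as a quantum analogue of the classical decomposition $\mathfrak{h}^* = \mathfrak{h}^*_+ \oplus \mathfrak{h}^*_-$ into $\pm 1$-eigenspaces of $\Theta$, and it is this lemma that allows the quantum Iwasawa decomposition of Theorem \ref{q-radial_thm_quantumiwasawa} to be formulated using $\check{\mathcal{B}}$ and $\check{\mathcal{A}}$ as two factors rather than having to track mixed $K$-terms.
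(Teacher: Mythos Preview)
Your proof is correct and follows essentially the same approach as the paper. The only cosmetic difference is that the paper absorbs the factor $K = K_\beta$ into the antisymmetric part from the start (writing $\delta = \tfrac{1}{2}(\alpha - \Theta(\alpha)) + \beta$), whereas you first split $K_\alpha$ and then use closure of $\check{\mathcal{A}}$ under multiplication; these are the same argument.
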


\begin{proof}
We only have to check the statement on the generators of $\check{\mathcal{A}}$, therefore we assume $K = K_{\beta}$ for some $\beta \in \check{Q}$, with $\Theta(\beta) = -\beta$.
Moreover $\Theta$ is an involution on $\mathfrak{h}^*$.
Hence take $\gamma = \frac{1}{2}(\alpha + \Theta(\alpha))$ and $\delta = \frac{1}{2}(\alpha - \Theta(\alpha)) + \beta$, so that $\Theta(\gamma) = \gamma$ and $\Theta(\delta) = -\delta$.
Then by the construction of $\gamma$ and $\delta$ we have $K_{\gamma} \in \check{\mathcal{B}}_{\textbf{c},\textbf{s}}$ and $K_{\delta} \in \check{\mathcal{A}}$.
Therefore we have $K_{\alpha} K_{\beta} = K_{\alpha + \beta} = K_{\gamma + \delta} = K_{\gamma} K_{\delta} \in \check{\mathcal{B}}_{\textbf{c},\textbf{s}} \check{\mathcal{A}}$.
\end{proof}

\begin{example}
Take Example \ref{q-radial_exm_A1xA1}. 
We have $\Theta : \alpha_1, \alpha_2 \mapsto -\alpha_2, -\alpha_1$.
Furthermore the quantum torus $\check{\mathcal{A}}$ is generated by $K_{\lambda(\alpha_1 + \alpha_2)}$ with $\lambda \in \frac{1}{2}\ZZ$.
Let $\mu \in Q$, we decompose $K_{\mu}K_{\lambda(\alpha_1 + \alpha_2)}$ in $\check{\mathcal{B}}\check{\mathcal{A}}$.
Write $\mu = \mu_1 \alpha_1 + \mu_2 \alpha_2$.
According to the proof of Lemma \ref{q-radial_lem_KinBA} we have $\gamma = \frac{1}{2}(\mu + \Theta(\mu)) = \frac{1}{2}(\mu_1 - \mu_2) \alpha_1 + \frac{1}{2}(\mu_2 - \mu_1)\alpha_2$ and $\delta = \frac{1}{2}(\mu - \Theta(\mu)) + \lambda(\alpha_1 + \alpha_2) = (\frac{1}{2}(\mu_1 + \mu_2) + \lambda) (\alpha_1 + \alpha_2)$.
Hence we have $K_{\mu} K_{\lambda(\alpha_1 + \alpha_2)} = K_{\gamma} K_{\delta} \in \check{\mathcal{B}} \check{\mathcal{A}}$.
\end{example}

We now give a quantum analogue of the Iwasawa decomposition.
There are multiple variants of the quantum Iwasawa decomposition already known, see \cite[Theorem 2.2]{Let04}.
However, Letzter \cite{Let04} restricts to only finite dimensional Lie algebras.
The quantum Iwasawa decomposition given by Letzter \cite{Let04} decomposes $\Uq(\mathfrak{g})$ in the right coideal $\mathcal{B}$, the quantum torus $\mathcal{A}$ and the ``positive part'' $\mathcal{N}^+$ generated by $\textrm{Ad}(\mathcal{M}_X)(E_i)$ for all $i \in I \backslash X$.
However if the right coideal $\mathcal{B}$ has simple generators we show in Theorem \ref{q-radial_thm_quantumiwasawa} that $\mathcal{N}^+$ can be replaced by its counterpart generated by all $F_i$ for $i \in I \backslash X$.

\begin{theorem}[Quantum Iwasawa decomposition] \label{q-radial_thm_quantumiwasawa}
Let $(\mathcal{U}_q(\mathfrak{g}'), \mathcal{B} = \mathcal{B}_{\textbf{c}, \textbf{s}})$ be a quantum symmetric pair with simple generators.
Let $\check{\mathcal{A}} = \check{A}_{\Theta}$ be the quantum torus of $(\check{\mathcal{U}}_q(\mathfrak{g}'), \check{\mathcal{B}})$ and $\mathcal{N}$ be the subalgebra of $\mathcal{U}_q(\mathfrak{g}')$ generated by $F_i$ for $i \in I \backslash X$.
We have the quantum Iwasawa decomposition $\check{\mathcal{U}}_q(\mathfrak{g}') = \check{\mathcal{B}} \check{\mathcal{A}} \mathcal{N}$.
\end{theorem}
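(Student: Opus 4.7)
The plan is to use the triangular decomposition $\check{\mathcal{U}}_q(\mathfrak{g}') \cong U^+ \tensor \check{U}^{0\prime} \tensor U^-$, which reduces the problem to showing that every PBW monomial $E_U K_\beta F_V$ lies in $\check{\mathcal{B}}\,\check{\mathcal{A}}\,\mathcal{N}$. Two structural inputs drive the argument. First, Corollary \ref{q-radial_cor_alternative_simple_B} gives $(\alpha_i,\alpha_j)=0$ whenever $i\in X$ and $j\in I\setminus X$; combined with the quantum Serre and Chevalley relations, this forces all $X$-indexed generators to commute with all $(I\setminus X)$-indexed generators (regardless of type). Second, the defining formula for $B_i$ together with $\theta_q(F_iK_i)=-v_iE_{\tau(i)}$ rearranges into the simple-generator identity
\begin{equation*}
E_{\tau(i)} \;=\; \tfrac{1}{c_iv_i}\,F_iK_i \;+\; \tfrac{s_i}{c_iv_i} \;-\; \tfrac{1}{c_iv_i}\,B_iK_i, \qquad i\in I\setminus X,
\end{equation*}
which rewrites every $E_j$ with $j\in I\setminus X$ as a sum of an element of $\mathcal{N}\,\check{U}^{0\prime}$, a scalar, and an element of $\check{\mathcal{B}}\,\check{U}^{0\prime}$.

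Using the first input, split $F_V=M\cdot F_W$ and $E_U=E_{U'}\cdot E_{U''}$ with $M,E_{U'}\in\check{\mathcal{M}}_X$, $F_W\in\mathcal{N}$, and $E_{U''}$ a word in $\{E_j:j\in I\setminus X\}$. Proceed by induction on the length $n$ of $E_{U''}$. In the base case $n=0$, a straightforward extension of Lemma \ref{q-radial_lem_KinBA} from $Q$ to $\check{Q}$ writes $K_\beta=K_\gamma K_\delta$ with $K_\gamma\in\check{U}^{0\prime}_\Theta\subseteq\check{\mathcal{B}}$ and $K_\delta\in\check{\mathcal{A}}$. Commuting $K_\delta$ past $M$ produces only a $q$-scalar from the weight pairing, and assembling the pieces gives $(E_{U'}K_\gamma M')\cdot K_\delta\cdot F_W\in\check{\mathcal{B}}\,\check{\mathcal{A}}\,\mathcal{N}$.

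For the inductive step, write $E_{U''}=E_{j_1}\cdots E_{j_n}$ and substitute the identity above for $E_{j_1}$, with $i_1:=\tau^{-1}(j_1)$. Three contributions appear. The \emph{constant contribution} deletes $E_{j_1}$ and has $E$-degree $n-1$. The \emph{$B$-contribution} inserts $B_{i_1}K_{i_1}$ in place of $E_{j_1}$; since $E_{U'}B_{i_1}\in\check{\mathcal{B}}$ and $K_{i_1}$ commutes leftward through the remaining $E$'s at the cost of $q$-scalars, it reduces to $\check{\mathcal{B}}$ times a monomial of $E$-degree $n-1$. The \emph{$F$-contribution} inserts $F_{i_1}K_{i_1}$ in place of $E_{j_1}$; transporting $F_{i_1}$ rightward through the string $K_{i_1}E_{j_2}\cdots E_{j_n}K_\beta$ yields only $q$-scalars when $F_{i_1}$ crosses a $K$ or an $E_{j_l}$ with $j_l\neq i_1$, and via $[E_{i_1},F_{i_1}]=(K_{i_1}-K_{i_1}^{-1})/(q_{i_1}-q_{i_1}^{-1})$ an additional term of strictly smaller $E$-degree whenever $j_l=i_1$. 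The surviving main term has $F_{i_1}$ immediately to the left of $F_V$, where it is absorbed into the $\mathcal{N}$-factor (using the first input once more to push the $\check{\mathcal{M}}_X$-piece $M$ past it), leaving an $E$-string of length $n-1$ to which the induction hypothesis applies.

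The main obstacle is the careful bookkeeping required while transporting $F_{i_1}$ rightward: one must verify that every correction term produced by the Chevalley commutator has strictly lower $(I\setminus X)$-indexed $E$-degree, and that the final reassembly into the ordered product $\check{\mathcal{B}}\,\check{\mathcal{A}}\,\mathcal{N}$ never re-introduces a higher-degree term. The decoupling of $X$- from $(I\setminus X)$-indexed generators provided by Corollary \ref{q-radial_cor_alternative_simple_B} is the crucial ingredient: it prevents any $X$-indexed generator from migrating into the wrong factor during these manipulations, so all of the induction bookkeeping stays confined to the $(I\setminus X)$ side, where the degree count is manifest.
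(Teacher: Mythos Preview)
Your argument is correct and follows essentially the same route as the paper's proof: reduce to PBW monomials via the triangular decomposition, use Corollary~\ref{q-radial_cor_alternative_simple_B} to separate $X$-indexed from $(I\setminus X)$-indexed generators, invoke Lemma~\ref{q-radial_lem_KinBA} for the base case, and run induction on the $(I\setminus X)$-indexed $E$-length using the identity $E_{\tau(i)} = (c_iv_i)^{-1}(F_iK_i + s_i - B_iK_i)$, handling the three resulting contributions exactly as the paper does (the $F$-contribution being pushed rightward through the remaining $E$'s via the Chevalley commutator). The only cosmetic differences are that the paper absorbs all $X$-indexed generators into $\check{\mathcal{B}}$ at the outset rather than carrying your factors $E_{U'}$ and $M$ along, and that the paper silently sets $v_i=1$ in the displayed identity for $E_{u_1}$.
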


\begin{proof}
This proof is based on Kolb \cite[Proposition 6.1 and Proposition 6.3]{Kol14}.

Take integers $M, N \geq 0$, $U = (u_1, u_2, \ldots, u_M) \in I^M$, $V = (v_1, v_2, \ldots, v_N) \in I^N$ and $\beta \in \check{Q}$.
Because $U^+ \tensor \check{U}^0 \tensor U^- \simeq \check{\Uq}(\mathfrak{g}')$ as vector spaces under the multiplication map it is sufficient to show $E_U K_{\beta} F_V \in \check{\mathcal{B}} \check{\mathcal{A}} \mathcal{N}$.
We prove, with induction on $M$, that $E_U K_{\beta} F_V \in \check{\mathcal{B}} \check{\mathcal{A}} \mathcal{N}$.

By Corollary \ref{q-radial_cor_alternative_simple_B} and the Serre relations an element $E_i$, where $i \in X$, commutes with every $E_j$, where $j \in I \backslash X$.
Since $E_i \in \check{\mathcal{B}}_{\textbf{c}, \textbf{s}}$ we can pull all $E_i$, where $i \in X$, to the left and we can assume that $U \in (I \backslash X)^M$.
Similarly, an element $F_i$, where $i \in X$, commutes with every $E_j$ and $F_j$, where $j \in I \backslash X$.
Because $F_i$, where $i \in X$, $q$-commutes with every element in $\check{U}^0$ and $F_i \in \check{\mathcal{B}}_{\textbf{c}, \textbf{s}}$, we can pull $F_i$ to the left and we can assume $V \in (I \backslash X)^N$.

If $M = 0$ we have by Lemma \ref{q-radial_lem_KinBA} that $K_{\beta} F_V \in \check{\mathcal{B}} \check{\mathcal{A}} \mathcal{N}$.
Let $M > 0$ and assume that for all $M' < M$ and $U' \in (I \backslash X)^{M'}$ we have $E_{U'} K_{\beta} F_V \in \check{\mathcal{B}} \check{\mathcal{A}} \mathcal{N}$.
Since $u_1 \in I \backslash X$ there is $t \in I \backslash X$ such that $\tau(t) = u$ and $B_t = F_t - c_t E_{u_1} K_{t}^{-1} + s_t K_{t}^{-1}$, so
\begin{equation*}
E_{u_1} = \frac{1}{c_t} F_t K_t - B_t K_t + \frac{s_t}{c_t}.
\end{equation*}
Hence
\begin{equation*}
E_U K_{\beta} F_{V} = \frac{1}{c_t} F_t K_t E_{U'} K_{\beta} F_V - B_t K_t E_{U'} K_{\beta} F_V + \frac{s_t}{c_t} E_{U'} K_{\beta} F_V.
\end{equation*}
Directly from the induction hypothesis we see $E_{U'} K_{\beta} F_V \in \check{\mathcal{B}} \check{\mathcal{A}} \mathcal{N}$.
Since $K_t$ and $E_{U'}$ $q$-commute there is $x \in \mathbb{Q}$ such that $K_t E_{U'} = q^x E_{U'} K_t$ and
\begin{equation*}
B_t K_t E_{U'} K_{\beta} F_V = q^x B_{t} E_{U'} K_{\beta + \alpha_t} F_V \in \check{\mathcal{B}} \check{\mathcal{A}} \mathcal{N}.
\end{equation*}
Lastly for $F_t K_t E_{U'} K_{\beta} F_V = q^x F_t E_{U'} K_{\beta + \alpha_t} F_V$ we use the relation $[E_u, F_t] = \delta_{ut} (K_u - K_u^{-1})(q_u - q_u^{-1})^{-1}$ repeatedly so that we can pull $F_t$ through $E_{U'}$.
Let $i$ be the smallest integer such that $u_i = t$.
We have
\begin{equation}
\label{q-radial_eqn_Ftpull}
\begin{split}
F_t E_{U'} K_{\beta + \alpha_t} F_V 
  &= E_{u_1} \ldots E_{u_{i-1}} F_t E_{u_i} E_{u_{i+1}} \ldots E_{u_{M-1}} K_{\beta + \alpha_t} F_V \\
  &= -E_{u_1} \ldots E_{u_{i}} F_t E_{u_{i+1}} \ldots E_{u_{M-1}} K_{\beta + \alpha_t} F_V \\
    &\qquad + E_{u_1} \ldots E_{u_{i-1}} \left( \frac{K_{u_i} - K_{u_i}^{-1}}{q - q^{-1}} \right) E_{u_{i+1}} \ldots E_{u_{M-1}} K_{\beta + \alpha_t} F_V.
\end{split}
\end{equation}
By the induction hypothesis
\begin{equation*}
E_{u_1} \ldots E_{u_{i-1}} \left( \frac{K_{u_i} - K_{u_i}^{-1}}{q - q^{-1}} \right) E_{u_{i+1}} \ldots E_{u_{M-1}} K_{\beta + \alpha_t} F_V
\end{equation*}
is in $\check{\mathcal{B}} \check{\mathcal{A}} \mathcal{N}$ after moving the $K_{u_i}^{\pm 1}$ to the right at the cost of a power of $q$.
Therefore, after repeated application of \eqref{q-radial_eqn_Ftpull}, it is sufficient to show that $E_{U''} F_t K_{\beta + \alpha_t} F_V \in \check{\mathcal{B}} \check{\mathcal{A}} \mathcal{N}$ for all $U''$ such that $|U''| \leq |U'|$.
But $F_t$ $q$-commute with $K_{\beta + \alpha_t}$ hence
\begin{equation*}
E_{U''} F_t K_{\beta + \alpha_t} F_V = q^{(\alpha_t, \beta + \alpha_t)} E_{U''} K_{\beta + \alpha_t} F_t F_V = q^{(\alpha_t, \beta + \alpha_t)} E_{U''} K_{\beta + \alpha_t} F_{V'},
\end{equation*}
where $V' = (t, v_1, v_2, \ldots, v_N)$.
Now with the induction hypothesis $E_{U''} K_{\beta + \alpha_t} F_{V'} \in \check{\mathcal{B}} \check{\mathcal{A}} \mathcal{N}$, which yields the result.
\end{proof}

Let $K_{\lambda} \in \check{A}$ and $Y \in \check{\mathcal{U}}_q(\mathfrak{g}')$.
If we have two right coideals $\check{\mathcal{B}}_{\textbf{c},\textbf{s}}$ and $\check{\mathcal{B}}_{\textbf{d},\textbf{t}}$ with simple generators for $\check{\mathcal{U}}_q(\mathfrak{g}')$ Theorem \ref{q-radial_thm_quantumiwasawa} shows that we can write $K_{\lambda} Y \in \check{\mathcal{B}}_{\textbf{c},\textbf{s}} \check{\mathcal{A}} \mathcal{N}$.
The question that remains is if an element of $\check{\mathcal{A}} \mathcal{N}$ has a decomposition in $\check{\mathcal{B}}_{\textbf{c},\textbf{s}} \check{\mathcal{A}} \check{\mathcal{B}}_{\textbf{d},\textbf{t}}$.
To answer this question we first introduce for each element of $\Uq(\mathfrak{g}')$ the set of regular points of $\check{\mathcal{A}}$ of this element.

We introduce the notation $\qexp(x) = q^x$.

\begin{definition}
The set of regular points of $1$ is $\mathcal{A}_{\text{reg}}(1) = \check{\mathcal{A}}$.
Let $U = (u_1, u_2, \ldots, u_k) \in (I \backslash X)^k$, with $k > 0$, and define the set of regular points $\mathcal{A}_{\text{reg}}(F_U)$ to be all $K_{\lambda} \in \check{\mathcal{A}}$ such that
\begin{equation}
\label{q-radial_eqn_lem_usefull}
\frac{
  c_{v_1} c_{v_2} \ldots c_{v_{k - \ell}}
}{
  d_{v_1} d_{v_2} \ldots d_{v_{k - \ell}}
}
\neq
\qexp \left( \sum_{i = 1}^{k-\ell} (\lambda + \mu, \alpha_{v_i} + \alpha_{\tau(v_i)})
  - \sum_{\substack{1 \leq i, j < k - \ell, \\ i \neq j}} (\alpha_{v_i}, \alpha_{v_j}) \right),
\end{equation}
for all $0 \leq \ell \leq k$, where $(v_1, v_2, \ldots, v_{k-\ell})$ is any subsequence of $U$ and $\mu$ runs over the set
\begin{equation} \label{q-radial_eqn_domainmu}
\{N_1 \mu_{u_1} + N_2 \mu_{u_2} + \ldots + N_k \mu_{u_k} : |N_j| \leq 1, \text{ for all } 1 \leq j \leq k \},
\end{equation}
where $\mu_i = \frac{1}{2}(\alpha_i - \Theta(\alpha_i))$.

Let $Y \in \Uq(\mathfrak{g})$ and $K_{\lambda} \in \mathcal{A}$, then by the quantum Iwasawa decomposition, Theorem \ref{q-radial_thm_quantumiwasawa}, we can write $K_{\lambda} Y = \sum_{j} C_j(\lambda;Y) K_{\lambda_j(\lambda;Y)} F_{V_j(\lambda;Y)}$, where $C_j(\lambda;Y) \in \mathcal{B}_{\textbf{c},\textbf{s}}$, $\lambda_j(\lambda;Y) \in Q$ such that $\Theta(\lambda_j(\lambda;Y)) = -\lambda_j(\lambda;Y)$ and $F_{V_{j(\lambda;Y)}} \in \mathcal{N}$.
The set of regular points $\mathcal{A}_{\text{reg}}(Y)$ of $Y$ are all $\lambda \in Q$ such that $\lambda_j(\lambda;Y) \in \mathcal{A}_{\text{reg}}(F_{V_j(\lambda;Y)})$ for all $j$.
\end{definition}

\begin{remark} \label{q-radial_rmk_subseq}
Let $U = (u_1, u_2, \ldots, u_k) \in (I \backslash X)^k$ and assume $V = (v_1, v_2, \ldots, v_{k-\ell})$ is a subsequence of $U$, for $0 \leq \ell \leq k$.
Then $\mathcal{A}_{\textrm{reg}}(F_U) \subseteq \mathcal{A}_{\textrm{reg}}(F_V)$.
\end{remark}

In Theorem \ref{q-radial_thm_main}, which is the main theorem of this paper, we give sufficient conditions on the elements of $\mathcal{A} \mathcal{N}$ to decompose in $\check{\mathcal{B}}_{\textbf{c},\textbf{s}} \check{\mathcal{A}} \check{\mathcal{B}}_{\textbf{d},\textbf{t}}$.

\begin{theorem} \label{q-radial_thm_main}
Let $(X, \tau)$ be an admissible pair and let $\mathcal{B}_{\textbf{c},\textbf{s}} = \mathcal{B}_{\textbf{c}, \textbf{s}}(X,\tau)$ and $\mathcal{B}_{\textbf{d},\textbf{t}} = \mathcal{B}_{\textbf{d},\textbf{t}}(X,\tau)$ be two right coideals such that $(\mathcal{U}_q(\mathfrak{g}'), \mathcal{B}_{\textbf{c},\textbf{s}}$) and $(\mathcal{U}_q(\mathfrak{g}'), \mathcal{B}_{\textbf{d},\textbf{t}})$ are quantum symmetric pairs with simple generators.
Fix $U = (u_1, u_2, \ldots, u_k)$ with $u_j \in I \backslash X$ and let $K_{\lambda} \in \mathcal{A}_{\text{reg}}(F_U)$.
Then $K_{\lambda} F_U \in \check{\mathcal{B}}_{\textbf{c},\textbf{s}} \check{\mathcal{A}} \check{\mathcal{B}}_{\textbf{d},\textbf{t}}$.
\end{theorem}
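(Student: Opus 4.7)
The plan is to induct on $k=|U|$, with the trivial base case $k=0$ holding because $K_\lambda\in\check{\mathcal{A}}\subseteq \check{\mathcal{B}}_{\textbf{c},\textbf{s}}\check{\mathcal{A}}\check{\mathcal{B}}_{\textbf{d},\textbf{t}}$. For the inductive step set $U'=(u_1,\ldots,u_{k-1})$ and insert the simple-generator identity
\[
F_{u_k}=B_{u_k}^{\textbf{d},\textbf{t}}+d_{u_k}v_{u_k}E_{\tau(u_k)}K_{u_k}^{-1}-t_{u_k}K_{u_k}^{-1}
\]
into $K_\lambda F_U=K_\lambda F_{U'}F_{u_k}$, producing three summands. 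The $B_{u_k}^{\textbf{d},\textbf{t}}$-summand lies in $\check{\mathcal{B}}_{\textbf{c},\textbf{s}}\check{\mathcal{A}}\check{\mathcal{B}}_{\textbf{d},\textbf{t}}$ after applying the induction hypothesis to $K_\lambda F_{U'}$ (regular by Remark~\ref{q-radial_rmk_subseq}) and right-multiplying by $B_{u_k}^{\textbf{d},\textbf{t}}\in\check{\mathcal{B}}_{\textbf{d},\textbf{t}}$. The $K_{u_k}^{-1}$-summand, after $q$-commuting through $F_{U'}$, becomes a scalar multiple of $K_{\lambda-\alpha_{u_k}}F_{U'}$; using Lemma~\ref{q-radial_lem_KinBA} to write $K_{\lambda-\alpha_{u_k}}=K_\gamma K_{\lambda-\mu_{u_k}}$ with $K_\gamma\in\check{\mathcal{B}}_{\textbf{c},\textbf{s}}$ and $K_{\lambda-\mu_{u_k}}\in\check{\mathcal{A}}$, the induction hypothesis applies as soon as $\lambda-\mu_{u_k}$ is still regular for $F_{U'}$, which is exactly what the $\mu$-freedom in~\eqref{q-radial_eqn_domainmu} delivers.

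The core of the proof is the middle summand $d_{u_k}v_{u_k}K_\lambda F_{U'}E_{\tau(u_k)}K_{u_k}^{-1}$. Commuting $E_{\tau(u_k)}$ past $F_{U'}$ via $[E_i,F_j]=\delta_{ij}(K_i-K_i^{-1})/(q_i-q_i^{-1})$ produces corrections with strictly shorter $F$-products (once the resulting $K$'s are pulled leftward through Lemma~\ref{q-radial_lem_KinBA}), and these are absorbed by the induction hypothesis with the shifted torus element remaining regular thanks to~\eqref{q-radial_eqn_domainmu}. The surviving leading piece $K_\lambda E_{\tau(u_k)}F_{U'}K_{u_k}^{-1}$ is reorganised by first pushing $K_\lambda$ through $E_{\tau(u_k)}$ (yielding a factor $q^{(\lambda,\alpha_{\tau(u_k)})}$) and then substituting the companion identity
\[
E_{\tau(u_k)}=\frac{1}{c_{u_k}v_{u_k}}\bigl(F_{u_k}K_{u_k}-B_{u_k}^{\textbf{c},\textbf{s}}K_{u_k}+s_{u_k}\bigr)
\]
derived from $B_{u_k}^{\textbf{c},\textbf{s}}$. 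The $B_{u_k}^{\textbf{c},\textbf{s}}K_{u_k}$-subpiece converts, after commuting $K_{u_k}$ past $F_{U'}$, into a $q$-scalar multiple of $B_{u_k}^{\textbf{c},\textbf{s}}K_\lambda F_{U'}$, which enters the target set by left-multiplication on the induction hypothesis for $U'$; the constant $s_{u_k}$-subpiece is treated like the $K_{u_k}^{-1}$-summand above. What remains is an $F_{u_k}K_{u_k}$-subpiece which assembles into a ``loop-back''
\[
\beta(\lambda,U)\,K_\lambda F_{\sigma U},\qquad \beta(\lambda,U)=\frac{d_{u_k}}{c_{u_k}}\,q^{(\lambda,\alpha_{u_k}+\alpha_{\tau(u_k)})-\sum_{i<k}(\alpha_{u_k},\alpha_{u_i})},
\]
where $\sigma U=(u_k,u_1,\ldots,u_{k-1})$ is the cyclic shift of $U$.

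One step of the reduction therefore yields $K_\lambda F_U\equiv\beta(\lambda,U)\,K_\lambda F_{\sigma U}$ modulo $\check{\mathcal{B}}_{\textbf{c},\textbf{s}}\check{\mathcal{A}}\check{\mathcal{B}}_{\textbf{d},\textbf{t}}$. Since the regularity set depends only on the multiset of indices of $U$, $K_\lambda$ is regular for every cyclic permutation of $U$; iterating the same reduction $k$ times closes the orbit $\sigma^k U=U$ and produces
\[
\Bigl(1-\prod_{j=0}^{k-1}\beta(\lambda,\sigma^j U)\Bigr)K_\lambda F_U\in\check{\mathcal{B}}_{\textbf{c},\textbf{s}}\check{\mathcal{A}}\check{\mathcal{B}}_{\textbf{d},\textbf{t}}.
\]
A direct expansion identifies the product of $\beta$'s with $\bigl(\prod_m d_{u_m}/\prod_m c_{u_m}\bigr)\,q^{E(\lambda,U)}$, where $E(\lambda,U)$ is precisely the exponent on the right-hand side of~\eqref{q-radial_eqn_lem_usefull} at $\ell=0$ with $\mu=0$; the regularity hypothesis $K_\lambda\in\mathcal{A}_{\mathrm{reg}}(F_U)$ therefore forbids $\prod_j\beta=1$, so we divide to finish the induction. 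The main obstacle I foresee is exactly this bookkeeping: tracking how the $q$-exponents accumulate through all $k$ cyclic iterations \emph{and} through every commutator cascade at shorter length, and matching each shorter-length residual with the correct $\ell>0$ slot and $\mu$-pattern of~\eqref{q-radial_eqn_lem_usefull}. It is precisely this dependency which forces the regularity condition to be imposed uniformly over every subsequence of $U$ and every sign choice of $\mu$.
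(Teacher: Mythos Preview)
Your proposal is correct and follows essentially the same approach as the paper's proof: induction on $|U|$, splitting off the rightmost $F_{u_k}$ via $B_{u_k}^{\textbf{d},\textbf{t}}$, commuting the resulting $E_{\tau(u_k)}$ leftward and reconverting it through $B_{u_k}^{\textbf{c},\textbf{s}}$ to obtain the cyclic-shift relation $K_\lambda F_U\equiv C(\lambda,U)K_\lambda F_{\sigma U}$, then closing the orbit after $k$ iterations and invoking the $\ell=0$, $\mu=0$ case of the regularity condition to ensure the product of coefficients is not $1$. Your coefficient $\beta(\lambda,U)$ matches the paper's $C(\lambda,U)$ exactly, and your handling of the shorter-length correction terms via Lemma~\ref{q-radial_lem_KinBA} together with the $\mu$-shifts in~\eqref{q-radial_eqn_domainmu} is precisely how the paper verifies that the induction hypothesis remains applicable at each stage.
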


\begin{proof}
Let $\sim$ be the equivalence relation modulo $\check{\mathcal{B}}_{\textbf{c},\textbf{s}} \check{\mathcal{A}} \check{\mathcal{B}}_{\textbf{d},\textbf{t}}$.
We proceed with induction on the structure of $F_U$.
If $|U| = 0$, then $K_{\lambda} F_U \in \check{\mathcal{A}} \subseteq \check{\mathcal{B}}_{\textbf{c},\textbf{s}} \check{\mathcal{A}} \check{\mathcal{B}}_{\textbf{d},\textbf{t}}$.
Let $k = |U| > 0$ and assume that, for all $V \subset U$ such that $V \neq U$, we have $K_{\lambda} F_V \in \check{\mathcal{B}}_{\textbf{c},\textbf{s}} \check{\mathcal{A}} \check{\mathcal{B}}_{\textbf{d},\textbf{t}}$.
For $u_k \in I \backslash X$ compute
\begin{equation}
\begin{split}
\label{q-radial_eqn_tedious1}
K_{\lambda} F_U 
  &= K_{\lambda} F_{u_1} F_{u_2} \ldots F_{u_{k-1}} B_{u_k}^{d_{u_k}, t_{u_k}}
    + d_{u_k} K_{\lambda} F_{u_1} F_{u_2} \ldots F_{u_{k-1}} E_{\tau(u_k)} K_{u_k}^{-1}  \\
    &\qquad - t_{u_k} K_{\lambda} F_{u_1} F_{u_2} \ldots F_{u_{k-1}} K_{u_k}^{-1}.
\end{split}
\end{equation}
By Remark \ref{q-radial_rmk_subseq} $K_{\lambda} \in \mathcal{A}_{\textrm{reg}}(F_{(u_1, u_2, \ldots, u_{k-1})})$.
Hence, by the induction hypothesis, $K_{\lambda} F_{u_1} F_{u_2} \ldots F_{u_{k-1}}$ is in $\check{\mathcal{B}}_{\textbf{c}, \textbf{s}} \check{\mathcal{A}} \check{\mathcal{B}}_{\textbf{d},\textbf{t}}$.
Therefore $K_{\lambda} F_{u_1} F_{u_2} \ldots F_{u_{k-1}} B_{u_k}^{d_{u_k}, t_{u_k}}$ belongs to $\check{\mathcal{B}}_{\textbf{c}, \textbf{s}} \check{\mathcal{A}} \check{\mathcal{B}}_{\textbf{d}, \textbf{t}}$. 
With Lemma \ref{q-radial_lem_KinBA} and the induction hypothesis we show that
\begin{equation*}
t_{u_k} K_{\lambda} F_{u_1} F_{u_2} \ldots F_{u_{k-1}} K_{u_k}^{-1} 
  = t_{u_k} q^{-(\alpha_{u_k}, \alpha_{u_1} + \alpha_{u_2} + \ldots + \alpha_{u_{k-1}})} K_{\lambda} K_{u_k}^{-1} F_{u_1} F_{u_2} \ldots F_{u_{k-1}}
\end{equation*}
is in $\check{\mathcal{B}}_{\textbf{c},\textbf{s}} \check{\mathcal{A}} \check{\mathcal{B}}_{\textbf{d}, \textbf{t}}$.
By Lemma \ref{q-radial_lem_KinBA} we write $K_{\lambda} K_{u_k}^{-1} = K_{\gamma} K_{\delta} \in \check{\mathcal{B}}_{\textbf{c}, \textbf{d}} \check{\mathcal{A}}$, where $\gamma = -\frac{1}{2}(\alpha_{u_k} - \Theta(\alpha_{u_k}))$ and $\delta = \lambda - \frac{1}{2}(\alpha_{u_k} - \Theta(\alpha_{u_k})) = \lambda - \mu_{u_k}$.
For $U' = (u_1, u_2, \ldots, u_{k-1})$ we have to show that $K_{\lambda - \mu_{u_k}} \in \mathcal{A}_{\text{reg}}(F_{U'})$ and hence, by the induction hypothesis, $K_{\lambda - \mu_{u_k}} F_{U'}$ belongs to $\check{\mathcal{B}}_{\textbf{c},\textbf{s}} \check{\mathcal{A}} \check{\mathcal{B}}_{\textbf{d}, \textbf{t}}$.
When $1 \leq \ell \leq k$ we rewrite condition \eqref{q-radial_eqn_lem_usefull} for $U$ for any subsequence $(v_1, v_2, \ldots, v_{k-\ell})$ of $U'$ to
\begin{equation} \label{q-radial_eqn_condition_Uprime}
\begin{split}
\frac{
  c_{v_1} c_{v_2} \ldots c_{v_{k - \ell}}
}{
  d_{v_1} d_{v_2} \ldots d_{v_{k - \ell}}
}
\neq
\qexp &\left( \sum_{i = 1}^{(k - 1) - (\ell - 1)} (\lambda + \mu, \alpha_{v_i} + \alpha_{\tau(v_i)})
  - \sum_{\substack{1 \leq i, j < (k - 1) - (\ell - 1), \\ i \neq j}} (\alpha_{v_i}, \alpha_{v_j}) \right).
\end{split}
\end{equation}
Substitute $\ell \mapsto \ell + 1$ in \eqref{q-radial_eqn_condition_Uprime} and take $N_k = -1$ in \eqref{q-radial_eqn_domainmu}, then 
\begin{equation*}
\begin{split}
\frac{
  c_{v_1} c_{v_2} \ldots c_{v_{k - 1 - \ell}}
}{
  d_{v_1} d_{v_2} \ldots d_{v_{k - 1 - \ell}}
}
\neq
\qexp &\left( \sum_{i = 1}^{(k - 1) - \ell} (\lambda - \mu_{u_k} + \widetilde{\mu}, \alpha_{v_i} + \alpha_{\tau(v_i)})
  - \sum_{\substack{1 \leq i, j < (k - 1) - \ell, \\ i \neq j}} (\alpha_{v_i}, \alpha_{v_j}) \right),
\end{split}
\end{equation*}
for $0 \leq \ell \leq k-1$ and where $\widetilde{\mu}$ ranges over the subset
\begin{equation*}
\{N_1 \mu_{u_1} + N_2 \mu_{u_2} + \ldots + N_{k-1} \mu_{u_{k-1}} : |N_j| \leq 1, \text{ for all } 1 \leq j \leq k-1 \}
\end{equation*}
of \eqref{q-radial_eqn_domainmu}.
Hence all the required conditions to apply the induction hypothesis on $K_{\lambda - \mu_{u_k}} F_{U'}$ hold. 
It follows that $K_{\lambda - \mu_{u_k}} F_{U'} \in \check{\mathcal{B}}_{\textbf{c},\textbf{s}} \check{\mathcal{A}} \check{\mathcal{B}}_{\textbf{d}, \textbf{t}}$ and $K_{\lambda} F_U \sim d_{u_k} K_{\lambda} F_{u_1} F_{u_2} \ldots F_{u_{k-1}} E_{\tau(u_k)} K_{u_k}^{-1}$.

We show that $E_{\tau(u_k)}K_{u_k}^{-1}$ can be pulled to the left side of (\ref{q-radial_eqn_tedious1}).
Let $p$ be the largest integer such that $\tau(u_k) = u_p$ where $0 \leq p \leq k-1$.
We show that we can pull $E_{\tau(u_k)}$ through $F_{u_p}$.
Since $[E_i, F_j] = \delta_{i,j} (q_i - q_i^{-1})^{-1} (K_i - K_i^{-1})$ we have
\begin{equation}
\label{q-radial_eqn_pulltrick}
\begin{split}
d_{u_k} &K_{\lambda} F_{u_1} F_{u_2} \ldots F_{u_{p-1}} F_{u_p} E_{\tau(u_k)} F_{u_{p+1}} \ldots F_{u_{k-1}} K_{u_k}^{-1}  \\
  &= d_{u_k} K_{\lambda} F_{u_1} F_{u_2} \ldots F_{u_{p-1}} \left(E_{\tau(u_k)} F_{u_p} - \frac{K_{u_p} - K_{u_p}^{-1}}{q_{u_p} - q_{u_p}^{-1}}\right) F_{u_{p+1}} \ldots F_{u_{k-1}} K_{u_k}^{-1}  \\
  &= d_{u_k} K_{\lambda} F_{u_1} F_{u_2} \ldots F_{u_{p-1}} E_{\tau(u_k)} F_{u_p} F_{u_{p+1}} \ldots F_{u_{k-1}} K_{u_k}^{-1}  \\
    &\qquad + 
  \qexp((\alpha_{u_p}, \alpha_{u_1} + \alpha_{u_2} + \ldots + \alpha_{u_{p-1}}) + (\alpha_{u_k}, \alpha_{u_{1}} + \alpha_{u_{2}} + \ldots + \alpha_{u_{k-1}}))  \\
    &\qquad \qquad \times (q_{u_p} - q_{u_p}^{-1})^{-1} d_{u_k} K_{\lambda} K_{u_p} K_{u_k}^{-1} F_{u_1} F_{u_2} \ldots F_{u_{p-1}} F_{u_{p+1}} \ldots F_{u_{k-1}} \\
    &\qquad - \qexp(-(\alpha_{u_p}, \alpha_{u_1} + \alpha_{u_2} + \ldots + \alpha_{u_{p-1}}) + (\alpha_{u_k}, \alpha_{u_{1}} + \alpha_{u_{2}} + \ldots + \alpha_{u_{k-1}}))  \\
    &\qquad \qquad \times (q_{u_p} - q_{u_p}^{-1})^{-1} d_{u_k} K_{\lambda} K_{u_p}^{-1} K_{u_k}^{-1} F_{u_1} F_{u_2} \ldots F_{u_{p-1}} F_{u_{p+1}} \ldots F_{u_{k-1}}.
\end{split}
\end{equation}
Let $U' = (u_1, u_2, \ldots, u_{p-1}, u_{p+1}, \ldots, u_{k-1})$.
By Lemma \ref{q-radial_lem_KinBA} we have $K_{\lambda} K_{u_p}^{\pm 1} K_{u_k}^{-1} \in \check{\mathcal{B}}_{\textbf{c},\textbf{s}} K_{\lambda \pm \mu_{u_p} - \mu_{u_k}}$.
Using the induction hypothesis we will show that $K_{\lambda \pm \mu_{u_p} - \mu_{u_k}} F_{U'} \in \check{\mathcal{B}}_{\textbf{c},\textbf{s}} \check{\mathcal{A}} \check{\mathcal{B}}_{\textbf{c}, \textbf{s}}$, so that \eqref{q-radial_eqn_pulltrick} gives
\begin{equation} \label{q-radial_eqn_pulltrick_equiv}
\begin{split}
d_{u_k} &K_{\lambda} F_{u_1} F_{u_2} \ldots F_{u_{p-1}} F_{u_p} E_{\tau(u_k)} F_{u_{p+1}} \ldots F_{u_{k-1}} K_{u_k}^{-1}  \\
 &\sim d_{u_k} K_{\lambda} F_{u_1} F_{u_2} \ldots F_{u_{p-1}} E_{\tau(u_k)} F_{u_p} F_{u_{p+1}} \ldots F_{u_{k-1}} K_{u_k}^{-1}.
\end{split}
\end{equation}
We show that $K_{\lambda \pm \mu_{u_p} - \mu_{u_k}} \in \mathcal{A}_{\text{reg}}(F_{U'})$
Note that, for $2 \leq \ell \leq k$, and for any subsequence $(v_1, v_2, \ldots, v_{k-\ell})$ of $U'$, \eqref{q-radial_eqn_domainmu} gives
\begin{equation} \label{q-radial_eqn_condition_Uprime2}
\begin{split}
\frac{
  c_{v_1} c_{v_2} \ldots c_{v_{k - \ell}}
}{
  d_{v_1} d_{v_2} \ldots d_{v_{k - \ell}}
}
\neq
\qexp &\left( \sum_{i = 1}^{(k - 2) - (\ell - 2)}(\lambda + \mu,  \alpha_{v_i} + \alpha_{\tau(v_i)})
  - \sum_{\substack{1 \leq i, j < (k - 2) - (\ell - 2), \\ i \neq j}} (\alpha_{v_i}, \alpha_{v_j}) \right).
\end{split}
\end{equation}
Substitute $\ell \mapsto \ell + 2$ in \eqref{q-radial_eqn_condition_Uprime2} and take $N_k = -1$, $N_{p} = \pm 1$ in \eqref{q-radial_eqn_domainmu}, then
\begin{equation*}
\begin{split}
\frac{
  c_{v_1} c_{v_2} \ldots c_{v_{k - 2 - \ell}}
}{
  d_{v_1} d_{v_2} \ldots d_{v_{k - 2 - \ell}}
}
\neq
\qexp &\left( \sum_{i = 1}^{(k - 2) - \ell} (\lambda \pm \mu_{u_p} - \mu_{u_k} + \widetilde{\mu}, \alpha_{v_i} + \alpha_{\tau(v_i)})
  - \sum_{\substack{1 \leq i, j < (k - 2) - \ell, \\ i \neq j}} (\alpha_{v_i}, \alpha_{v_j}) \right),
\end{split}
\end{equation*}
for $0 \leq \ell \leq k-2$ and where $\widetilde{\mu}$ ranges over the subset
\begin{equation*}
\{N_1 \mu_{u_1} + \ldots + N_{p-1} \mu_{u_{p-1}} + N_{p+1} \mu_{u_{p+1}} + \ldots + N_{k-1} \mu_{u_{k-1}} : |N_j| \leq 1, \text{ for all } 1 \leq j \leq k-1, j \neq p \}
\end{equation*}
of \eqref{q-radial_eqn_domainmu}.
Hence all the required conditions to apply the induction hypothesis on $K_{\lambda \pm \mu_{u_p} - \mu_{u_k}} F_{U'}$ hold. 
It follows that $K_{\lambda - \mu_{u_k}} F_{U'} \in \check{\mathcal{B}}_{\textbf{c},\textbf{s}} \check{\mathcal{A}} \check{\mathcal{B}}_{\textbf{d}, \textbf{t}}$.

By repeated application of (\ref{q-radial_eqn_pulltrick_equiv}) we can pull $E_{\tau(u_k)}$ through (\ref{q-radial_eqn_tedious1}).
Taking into account the $q$-commutation relations for $K_{u_k}^{-1}$ and $F_{u_i}$ we have modulo $\check{\mathcal{B}}_{\textbf{c}, \textbf{s}} \check{\mathcal{A}} \check{\mathcal{B}}_{\textbf{d}, \textbf{t}}$
\begin{equation}
\label{q-radial_eqn_pullEvi}
\begin{split}
K_{\lambda} F_{U} 
  &\sim \qexp((\lambda, \alpha_{\tau(u_k)}) - (\alpha_{u_k}, \alpha_{u_1} + \alpha_{u_2} + \ldots + \alpha_{u_{k-1}}))  \\
    &\qquad \qquad \times \frac{d_{u_k}}{c_{u_k}} c_{u_k} E_{\tau(u_k)} K_{u_k}^{-1} 
  K_{\lambda} F_{u_1} F_{u_2} \ldots F_{u_{k-1}}  \\
  &\quad = \qexp((\lambda, \alpha_{\tau(u_k)}) - (\alpha_{u_k}, \alpha_{u_1} + \alpha_{u_2} + \ldots + \alpha_{u_{k-1}})) \\
    &\qquad \qquad \times \frac{d_{u_k}}{c_{u_k}} (F_{u_k} - c_{u_k} B_{u_k}^{c_{u_k}, s_{u_k}} + s_{u_k} K_{u_k}^{-1})
    K_{\lambda} F_{u_1} F_{u_2} \ldots F_{u_{k-1}}  \\
  &\quad \sim \qexp((\lambda, \alpha_{u_k} + \alpha_{\tau(u_k)}) - (\alpha_{u_k}, \alpha_{u_1} + \alpha_{u_2} + \ldots + \alpha_{u_{k-1}}))  \\
    &\qquad \qquad \times \frac{d_{u_k}}{c_{u_k}} K_{\lambda} F_{u_k} F_{u_1} F_{u_2} \ldots F_{u_{k-1}},  
\end{split}
\end{equation}
because we already noticed that $K_{u_k}^{-1} K_{\lambda} F_{u_1} F_{u_2} \ldots F_{u_{k-1}} \in \check{\mathcal{B}}_{\textbf{c}, \textbf{s}} \check{\mathcal{A}} \check{\mathcal{B}}_{\textbf{d}, \textbf{t}}$.
Hence we obtain for $U \in (I \backslash X)^k$ and $K_{\lambda} \in \mathcal{A}_{\text{reg}}(F_U)$ the identity
\begin{equation}
\label{q-radial_eqn_permutation}
\begin{split}
K_{\lambda} F_{U} &\sim C(\lambda, U) K_{\lambda} F_{u_k} F_{u_1} F_{u_2} \ldots F_{u_{k-1}},
\end{split}
\end{equation}
where
\begin{equation}
\label{q-radial_eqn_permutation_coef}
\begin{split}
C(\lambda, U) &= \qexp((\lambda, \alpha_{u_k} + \alpha_{\tau(u_k)}) - (\alpha_{u_k}, \alpha_{u_1} + \alpha_{u_2} + \ldots + \alpha_{u_{k-1}})) \frac{d_{u_k}}{c_{u_k}} 
\end{split}
\end{equation}

Let $S_k$ be the permutation group on $\{1, 2, \ldots, k\}$.
For $V = (v_1, v_2, \ldots, v_k) \in I^k$ and $\sigma \in S_k$ define the action of $\sigma$ on $I^k$ by $\sigma V = (v_{\sigma(1)}, v_{\sigma(2)}, \ldots, v_{\sigma(k)})$.

Let $\sigma = (1\,2\,\ldots\,k) \in S_k$ be the rotation of order $k$.
Define $U_0 = U$ and $U_{\ell} = \sigma U_{\ell-1}$ for $\ell > 0$.
Fix $K_{\lambda} \in \mathcal{A}_{\text{reg}}(F_U)$.
Note that requirements \eqref{q-radial_eqn_lem_usefull} and \eqref{q-radial_eqn_domainmu} are invariant under the action of $\sigma$, i.e. $v_i \mapsto v_{\sigma(i)}$ for all $i \in I$ in \eqref{q-radial_eqn_lem_usefull} and \eqref{q-radial_eqn_domainmu}, hence for all $\ell \geq 0$ we have $K_{\lambda} \in \mathcal{A}_{\text{reg}}(F_{U_{\ell}})$.
Therefore the requirements for \eqref{q-radial_eqn_permutation} are satisfied for all $U_{\ell}$ and we have $K_{\lambda} F_{U_{\ell}} \sim C(\lambda, U_{\ell}) F_{U_{\ell+1}}$ for all $\ell \geq 0$.
Since $\sigma^k = \id$ it follows that $U_k = U_0$ and from \eqref{q-radial_eqn_permutation_coef} we have
\begin{equation} \label{q-radial_eqn_final_product}
\begin{split}
K_{\lambda} F_{U} &\sim \left( \prod_{\ell = 0}^{k-1} C(\lambda, U_i) \right) K_{\lambda} F_U \\
  &= \qexp \left( (\lambda, \sum_{i = 1}^k (\alpha_{u_i} + \alpha_{u_{\tau(i)}}))
    - \sum_{\substack{1 \leq i, j \leq k, \\ i \neq j}} (\alpha_{u_i}, \alpha_{u_j}) \right) \times \frac{d_{u_1} d_{u_2} \ldots d_{u_k}}{c_{u_1} c_{u_2} \ldots c_{u_k}} K_{\lambda} F_U. 
\end{split}
\end{equation}
Because $K_{\lambda} \in \mathcal{A}_{\text{reg}}(F_U)$ it follows from \eqref{q-radial_eqn_lem_usefull} that the coefficient of $F_U$ on the right hand side of \eqref{q-radial_eqn_final_product} is not equal to one.
Subtracting the right hand side gives $K_{\lambda} F_U \sim 0$ or equivalent $K_{\lambda} F_U \in \check{\mathcal{B}}_{\textbf{c}, \textbf{s}} \check{\mathcal{A}} \check{\mathcal{B}}_{\textbf{d}, \textbf{t}}$.
\end{proof}

\begin{remark}
The proof of Theorem \ref{q-radial_thm_main} is constructive, hence provides an algorithm to calculate the radial part $K_{\lambda} F_U$ for all $U = (u_1, u_2, \ldots, u_k) \in (I \backslash X)^k$ where $K_{\lambda} \in \mathcal{A}_{\textrm{reg}}(F_U)$.
However the number of terms in $\check{\mathcal{B}}_{\textbf{c}, \textbf{s}} \check{\mathcal{A}} \check{\mathcal{B}}_{\textbf{d}, \textbf{t}}$ of $K_{\lambda} F_U$ where $|U| = k$ grows exponentially in $k$.
Indeed, in the worst case, to permute $U = (u_1, u_2, \ldots, u_k)$ to $(u_k, u_1, u_2, \ldots, u_{k-1})$, \eqref{q-radial_eqn_tedious1} gives two extra terms $K_{\lambda}F_V$, where $|V| = k-1$, \eqref{q-radial_eqn_pulltrick} gives $2(k-2)$ extra terms $K_{\lambda} F_{V'}$, where $|V'| = k-2$, and \eqref{q-radial_eqn_pullEvi} gives two extra terms $K_{\lambda} F_V$, where $|V| = k-1$.
If $f_k$ is the number of terms for $K_{\lambda}F_U$, where $|U| = k$, in the worst case, we have recurrence relation $f_k = 4f_{k-1} + 2(k-2)f_{k-2}$ for $k \geq 1$, with starting values $f_{-1} = 0$ and $f_0 = 1$.
Since $f_k \geq 4f_{k-1}$ we have $f_k \geq 4^k$.
On the other hand, $f_{k-1} \geq f_{k-2}$, so that $f_k = 4f_{k-1} + 2(k-2)f_{k-2} \leq 2kf_{k-1}$.
Therefore an upper bound is given by $f_k \leq 2^k k!$.
This shows that $f_k$ grows exponentially in $k$.
Applying the permutation $k$ times finishes the algorithm, therefore, in the worst case, producing $kf_k$ terms.
The number of terms $kf_k$ grows exponentially in $k$.
\end{remark}

\begin{remark}
Assumption (\ref{q-radial_eqn_lem_usefull}) in Theorem \ref{q-radial_thm_main} is to be expected.
A similar assumption is made in \cite[Theorem 2.4]{CM82} where Casselman and Mili\v{c}i\'c take the action of $a$, where $a$ is an regular point of the torus $A$.
If $a$ would not be a regular point \cite[Lemma 2.2]{CM82} fails to be true because of a division by zero.
The same problem occurs in the quantum case if we don't assume (\ref{q-radial_eqn_lem_usefull}).
\end{remark}

\section{Spherical functions on quantum symmetric pairs} \label{q-radial_sec_sphericalfunctions}

In this section we introduce spherical functions in general.
We give a quantum analogue for the map $\Pi$ of Casselman and Mili\v{c}i{\'c} \cite{CM82} for quantum symmetric pairs with simple generators.
Moreover, we prove a quantum analogue, Theorem \ref{q-radial_thm_CM_analogue}, of \cite[Theorem 3.1]{CM82}.
Next we study the $*$-invariance of the right coideals of a Hopf $*$-algebra.
Theorem \ref{q-radial_thm_bi_invariant_weight} gives conditions for an orthogonality relation for spherical functions on Hopf $*$-algebras.

\begin{definition}
Let $\mathcal{B}$ and $\mathcal{B}'$ be two right coideal subalgebras of a Hopf algebra $A$.
Take finite dimensional representations $t_{\mathcal{B}}$ and $t_{\mathcal{B}'}$ of $\mathcal{B}$, $\mathcal{B}'$ acting on vector spaces $V$, $W$ respectively.
A linear map $\Phi : A \to \End(W, V)$ is called a $(t_{\mathcal{B}}, t_{\mathcal{B}'})$-spherical function if for all $B \in \mathcal{B}$, $B' \in \mathcal{B}'$ and $Y \in A$
\begin{equation*}
\Phi(B Y B') = t_{\mathcal{B}}(B) \Phi(Y) t_{\mathcal{B}'}(B').
\end{equation*}
The space of all $(t_{\mathcal{B}}, t_{\mathcal{B}'})$-spherical functions is denoted by $\mathcal{F}_{t_{\mathcal{B}}, t_{\mathcal{B}'}}(A)$.
\end{definition}

\begin{remark}
The case $A = \mathcal{U}_q(\mathfrak{g})$, $\mathcal{B} = \mathcal{B}_{\textbf{c}, \textbf{s}}$ and $\mathcal{B}' = \mathcal{B}_{\textbf{d}, \textbf{t}}$ where $\mathfrak{g}$ is a semi-simple Lie algebra with non-reduced root system has been studied by Letzter \cite{Let04}.
She classifies all spherical functions where $t_{\mathcal{B}} = \epsilon_{\mathcal{B}}$ and $t_{\mathcal{B}'} = \epsilon_{\mathcal{B}'}$.
In this case the $(\epsilon_{\mathcal{B}}, \epsilon_{\mathcal{B}'})$-spherical functions are identified with Macdonald polynomials, \cite[Theorem 8.2]{Let04}.
\end{remark}

\begin{definition} \label{q-radial_def_radial_part}
Let $(X,\tau)$ be an admissible pair and $\check{\mathcal{B}}_{\textbf{c},\textbf{s}} = \check{\mathcal{B}}_{\textbf{c},\textbf{s}}(X,\tau)$ and $\check{\mathcal{B}}_{\textbf{d},\textbf{t}} = \check{\mathcal{B}}_{\textbf{c}, \textbf{t}}(X,\tau)$ be two right coideals of $\check{\mathcal{U}}_q(\mathfrak{g}')$ with simple generators.
Fix finite dimensional representations $t_1$ of $\check{\mathcal{B}}_{\textbf{c},\textbf{s}}$ acting on vector space $V$ and $t_2$ of $\check{\mathcal{B}}_{\textbf{d},\textbf{t}}$ acting on vector space $W$.
Let $\mathcal{F}_{t_1, t_2}(\check{\mathcal{U}}_q(\mathfrak{g}'))$ be the set of $(t_1, t_2)$-spherical functions on $\check{\mathcal{U}}_q(\mathfrak{g}')$.
Write $\textrm{Res}$ for the restriction map of $\mathcal{F}_{t_1, t_2}(\check{\mathcal{U}}_q(\mathfrak{g}'))$ to the quantum torus $\check{\mathcal{A}}$, i.e. $\textrm{Res}(\Phi) : \check{\mathcal{A}} \to \End(W, V)$.
We define an action of $\Uq(\mathfrak{g})$ on $\mathcal{F}_{t_1, t_2}$ by $Y.\Phi(Z) = \Phi(ZY)$ for all $Y, Z \in \Uq(\mathfrak{g})$ and $\Phi \in \mathcal{F}_{t_1, t_2}(\Uq(\mathfrak{g}'))$.

Let $Y \in \check{U}_q(\mathfrak{g}')$ and $K_{\lambda} \in \mathcal{A}_{\text{reg}}(Y)$.
According to Theorem \ref{q-radial_thm_quantumiwasawa} we have $K_{\lambda} Y \in \check{\mathcal{B}}_{\textbf{c}, \textbf{s}} \check{\mathcal{A}} \mathcal{N}$, hence we can write $K_{\lambda}Y = \sum_{i} C_i(\lambda; Y) K_{\mu_i(\lambda;Y)} F_{V_i(\lambda;Y)}$, where $C_i(\lambda;Y) \in \check{\mathcal{B}}_{\textbf{c}, \textbf{s}}$, $\mu_i(\lambda;Y) \in \check{Q}$ with $\Theta(\mu_i(\lambda;Y)) = -\mu_i(\lambda;Y)$ and $V_i(\lambda;Y) \in \bigcup_{n \geq 0} (I \backslash X)^n$. 
Combining Theorem \ref{q-radial_thm_quantumiwasawa} with Theorem \ref{q-radial_thm_main} applied on all $K_{\mu_i(\lambda;Y)} F_{V_i(\lambda;V)}$ for $K_{\lambda} \in \mathcal{A}_{\textrm{reg}}(Y)$ we have the map
\begin{equation*}
\Pi(Y) : \mathcal{A}_{\textrm{reg}}(Y) \to \check{\mathcal{B}}_{\textbf{c}, \textbf{s}} \tensor \check{\mathcal{A}} \tensor \check{\mathcal{B}}_{\textbf{d}, \textbf{t}}.
\end{equation*}
For $K_{\lambda} \in \mathcal{A}_{\text{reg}}(Y)$, we introduce the notation
\begin{equation} \label{q-radial_eqn_explicit_BAB}
\Pi(Y)(K_{\lambda}) = \sum_{i} B_i(\lambda;Y) K_{\lambda_i(\lambda;X)} B_i'(\lambda;Y),
\end{equation}
where $B_i(\lambda;Y) \in \check{\mathcal{B}}_{\textbf{c}, \textbf{s}}$, $\lambda_i(\lambda;X) \in \check{\mathcal{A}}$ and $B_i'(\lambda;Y) \in \check{\mathcal{B}}_{\textbf{d}, \textbf{t}}$.
Let $\eta_{t_1, t_2} = t_1 \tensor 1 \tensor t_2$ and define the map
\begin{equation*}
\Pi_{t_1, t_2}(Y) = \eta_{t_1, t_2} \circ \Pi(Y) : \mathcal{A}_{\textrm{reg}}(Y) \to \End(V) \tensor \check{\mathcal{A}} \tensor \End(W).
\end{equation*}
We define a map $\cdot : \End(V) \tensor \check{\mathcal{A}} \tensor \End(W) \times \textrm{Res}(\mathcal{F}_{t_1, t_2}(\check{\mathcal{U}}_q(\mathfrak{g}')) \to \End(W, V)$ defined on the generators by
\begin{equation*}
(T_1 \tensor K_{\lambda} \tensor T_2) \cdot \textrm{Res}(\Phi) = T_1 \Phi(K_{\lambda}) T_2,
\end{equation*}
for every $T_1 \in \End(V)$, $T_2 \in \End(W)$, $\lambda \in \check{Q}$ such that $\Theta(\lambda) = -\lambda$ and $\Phi \in \mathcal{F}_{t_1, t_2}(\check{\mathcal{U}}_q(\mathfrak{g}')$.
\end{definition}

The following Theorem is a quantum analogue of \cite[Theorem 3.1]{CM82}.

\begin{theorem} \label{q-radial_thm_CM_analogue}
Let $\check{\mathcal{B}}_{\textbf{c},\textbf{s}}$ and $\check{\mathcal{B}}_{\textbf{d},\textbf{t}}$ be two right coideal with simple generators of $\check{\mathcal{U}}_q(\mathfrak{g}')$.
Let $Y \in \mathcal{U}_q(\mathfrak{g}')$ and $\Phi \in \mathcal{F}_{t_1, t_2}(\check{\mathcal{U}}_q(\mathfrak{g}'))$.
We have $\textrm{Res}(Y.\Phi)(K_{\lambda}) = \Pi_{t_1, t_2}(Y)(K_{\lambda}) \cdot \textrm{Res}(\Phi)$, for all $K_{\lambda} \in \mathcal{A}_{\textrm{reg}}(Y)$.
\end{theorem}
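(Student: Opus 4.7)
The plan is to unfold everything directly from the definitions and invoke Theorem \ref{q-radial_thm_main} as the only nontrivial input. First I would apply the definition of the action: $\textrm{Res}(Y.\Phi)(K_\lambda) = (Y.\Phi)(K_\lambda) = \Phi(K_\lambda Y)$. Since $K_\lambda \in \mathcal{A}_{\textrm{reg}}(Y)$, Theorem \ref{q-radial_thm_quantumiwasawa} followed by Theorem \ref{q-radial_thm_main} applied termwise to the Iwasawa decomposition (as described right before equation \eqref{q-radial_eqn_explicit_BAB}) gives an expansion
\begin{equation*}
K_\lambda Y = \sum_i B_i(\lambda;Y) \, K_{\lambda_i(\lambda;Y)} \, B_i'(\lambda;Y),
\end{equation*}
with $B_i(\lambda;Y) \in \check{\mathcal{B}}_{\textbf{c},\textbf{s}}$, $K_{\lambda_i(\lambda;Y)} \in \check{\mathcal{A}}$, and $B_i'(\lambda;Y) \in \check{\mathcal{B}}_{\textbf{d},\textbf{t}}$. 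This is exactly the element $\Pi(Y)(K_\lambda)$ of $\check{\mathcal{B}}_{\textbf{c},\textbf{s}} \tensor \check{\mathcal{A}} \tensor \check{\mathcal{B}}_{\textbf{d},\textbf{t}}$ read multiplicatively.

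Next I would apply $\Phi$ to the decomposition and use linearity, followed by the defining $(t_1,t_2)$-sphericity property $\Phi(B Z B') = t_1(B)\,\Phi(Z)\,t_2(B')$ (valid since $B_i(\lambda;Y) \in \check{\mathcal{B}}_{\textbf{c},\textbf{s}}$ and $B_i'(\lambda;Y) \in \check{\mathcal{B}}_{\textbf{d},\textbf{t}}$), to conclude
\begin{equation*}
\Phi(K_\lambda Y) = \sum_i t_1\bigl(B_i(\lambda;Y)\bigr) \, \Phi\bigl(K_{\lambda_i(\lambda;Y)}\bigr) \, t_2\bigl(B_i'(\lambda;Y)\bigr).
\end{equation*}

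Finally I would unfold the right-hand side of the claimed identity: by definition $\Pi_{t_1,t_2}(Y)(K_\lambda) = (t_1 \tensor \id \tensor t_2) \circ \Pi(Y)(K_\lambda)$, so
\begin{equation*}
\Pi_{t_1,t_2}(Y)(K_\lambda) = \sum_i t_1\bigl(B_i(\lambda;Y)\bigr) \tensor K_{\lambda_i(\lambda;Y)} \tensor t_2\bigl(B_i'(\lambda;Y)\bigr),
\end{equation*}
and the pairing $\cdot$ with $\textrm{Res}(\Phi)$ is defined on pure tensors by $(T_1 \tensor K_\mu \tensor T_2) \cdot \textrm{Res}(\Phi) = T_1 \, \Phi(K_\mu) \, T_2$. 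Applying this termwise yields precisely the sum computed in the previous paragraph, completing the identification.

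There is essentially no hidden obstacle: the content of the theorem is entirely carried by Theorem \ref{q-radial_thm_main}, which guarantees that the decomposition exists and lies in $\check{\mathcal{B}}_{\textbf{c},\textbf{s}} \check{\mathcal{A}} \check{\mathcal{B}}_{\textbf{d},\textbf{t}}$ rather than merely in $\check{\mathcal{B}}_{\textbf{c},\textbf{s}} \check{\mathcal{A}} \mathcal{N}$; once this is in hand, the $(t_1,t_2)$-bi-equivariance of $\Phi$ does the rest. The only point requiring genuine care is bookkeeping: one must ensure the expansion $\Pi(Y)(K_\lambda)$ used to evaluate $\Phi(K_\lambda Y)$ is the same one used to define $\Pi_{t_1,t_2}(Y)(K_\lambda)$, so that the two sides agree term-by-term rather than only after some additional manipulation. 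This is immediate from Definition \ref{q-radial_def_radial_part}, where $\Pi(Y)$ and hence $\Pi_{t_1,t_2}(Y)$ are defined precisely via the decomposition supplied by Theorems \ref{q-radial_thm_quantumiwasawa} and \ref{q-radial_thm_main}.
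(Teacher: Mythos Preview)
Your proposal is correct and follows essentially the same approach as the paper's proof: both unfold $\textrm{Res}(Y.\Phi)(K_\lambda) = \Phi(K_\lambda Y)$, invoke the decomposition \eqref{q-radial_eqn_explicit_BAB} coming from Theorems \ref{q-radial_thm_quantumiwasawa} and \ref{q-radial_thm_main}, apply the $(t_1,t_2)$-sphericity of $\Phi$ term by term, and then match this with the definition of $\Pi_{t_1,t_2}(Y)(K_\lambda)\cdot\textrm{Res}(\Phi)$. Your version is slightly more explicit about the bookkeeping, but the argument is the same.
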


\begin{proof}
Let $K_{\lambda} \in \mathcal{A}_{\textrm{reg}}(Y)$ and by \eqref{q-radial_eqn_explicit_BAB} write $\Pi(Y)(K_{\lambda}) = \sum_{i} B_i(\lambda;Y) K_{\lambda_i(\lambda;X)} B_i'(\lambda;Y)$.
Evaluate both sides in $K_{\lambda}$, which gives
\begin{equation*}
\textrm{Res}(Y.\Phi)(K_{\lambda}) 
  = \Phi(K_{\lambda}Y)
  = \sum_{i} t_1(B_i(\lambda;Y)) \Phi(K_{\mu_i(\lambda;Y)}) t_2(B_i'(\lambda;Y)),
\end{equation*}
and on the other hand
\begin{equation*}
\begin{split}
\Pi_{t_1, t_2}(Y)(K_{\lambda}) \cdot \textrm{Res}(\Phi)
  &= (\sum_i t_1(B_i(\lambda;Y)) \tensor K_{\mu_i(\lambda;Y)} \tensor t_2(B_i'(\lambda;Y))) \cdot \Phi \\
  &= \sum_i t_1(B_i(\lambda;Y)) \Phi(K_{\mu_i(\lambda;Y)}) t_2(B_i'(\lambda;Y)).
\qedhere
\end{split}
\end{equation*}
\end{proof}

The element $\Pi_{t_1, t_2}(Y)$ is called the radial part of $Y$.
In Section \ref{q-radial_sec_applications} we compute the radial part of central elements in three examples.
But first we study the $*$-invariance of $\mathcal{B}_{\textbf{c}, \textbf{s}}$.

\begin{proposition} \label{q-radial_prop_star}
Let $(X, \tau)$ be an admissible pair and $\mathcal{B}_{\textbf{c}, \textbf{s}} = \mathcal{B}_{\textbf{c}, \textbf{s}}(X, \tau)$ be a right coideal subalgebra of $\Uq(\mathfrak{g}')$ with simple generators.
Let $\mu \in \Aut(A,X)$ such that $\mu$ restricted to $X$ is the identity on $X$, $\mu^2 = \id$ and $\mu \tau = \tau \mu$.
Define a complex $*$-structure on $\Uq(\mathfrak{g}')$ by
\begin{equation} \label{q-radial_eqn_*-structure}
K_i^* = K_{\mu(i)}, \quad E_i^* = \sigma_i K_{\mu(i)} F_{\mu(i)}, \quad F_i^* = \sigma_i E_{\mu(i)} K_{\mu(i)}^{-1},
\end{equation}
where $i \in I$, $\sigma_i = 1$ if $\mu(i) \neq i$ and $\sigma_i \in \{ \pm 1 \}$ if $\mu(i) = i$.
Let $\mathcal{B}_{\textbf{c},\textbf{s}}$ be a right coideal with simple generators such that
\begin{equation} \label{q-radial_eqn_*-invBrequirements}
c_{\mu\tau(i)} = \sigma_i \sigma_{\tau(i)} \overline{c_i}^{-1} q^{2-(\alpha_{\mu(i)}, \alpha_{\mu\tau(i)})},
\quad s_{\mu\tau(i)} = -\sigma_i \overline{c_i}^{-1} \overline{s_i}.
\end{equation}
The $*$-structure in (\ref{q-radial_eqn_*-structure}) is a $*$-operator on $\Uq(\mathfrak{g}')$.
Moreover $\mathcal{B}_{\textbf{c}, \textbf{s}}$ is $*$-invariant with the $*$-action given on the generators $\alpha \in Q$, with $\Theta(\alpha) = \alpha$, and $i \in I \backslash X$ by
\begin{equation} \label{q-radial_eqn_*-explicit-B}
K_{\alpha}^* = K_{\beta}, \quad B_i^* = -\sigma_i \overline{c_i} K_{\mu\tau(i) - \mu(i)} B_{\mu\tau(i)},
\end{equation}
where $\alpha = \sum_{i \in I} n_i \alpha_i \in Q$, $\Theta(\alpha) = \alpha$ and $\beta = \sum_{i \in I} n_i \alpha_{\mu(i)}$.
\end{proposition}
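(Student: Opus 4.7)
The proposition splits into two claims: that $*$ as defined by \eqref{q-radial_eqn_*-structure} extends to an antilinear antimultiplicative involution on $\Uq(\mathfrak{g}')$, and that $\mathcal{B}_{\textbf{c},\textbf{s}}$ is $*$-stable with the action \eqref{q-radial_eqn_*-explicit-B}. For the first claim I would check $(x^*)^*=x$ on the generators $K_h,E_i,F_i$---immediate from $\mu^2=\id_I$ together with the convention that $\sigma_i=1$ whenever $\mu(i)\neq i$, which forces $\sigma_i\sigma_{\mu(i)}=1$ in all cases---and then verify that each relation in \eqref{q-radial_eqn_uqgrelations} as well as both quantum Serre relations is preserved. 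The structural inputs are $\mu\in\Aut(A,X)$, which yields $(\alpha_{\mu(i)},\alpha_{\mu(j)})=(\alpha_i,\alpha_j)$ and $a_{\mu(i),\mu(j)}=a_{ij}$, and the reality $\overline{q}=q$. The commutator $[E_i,F_j]$ relation reduces, after one $q$-commutation, to the analogous relation with indices $\mu(i),\mu(j)$; these calculations are in the spirit of Letzter \cite{Let02} and Kolb \cite{Kol14}, \S 7, and can be carried out generator by generator.

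For the Cartan part of $\mathcal{B}_{\textbf{c},\textbf{s}}$, given $\alpha=\sum n_i\alpha_i\in Q$ with $\Theta(\alpha)=\alpha$ and $\beta=\sum n_i\alpha_{\mu(i)}$, the identity $K_\alpha^*=K_\beta$ is immediate. Using $\mu\tau=\tau\mu$ together with Corollary \ref{q-radial_cor_alternative_simple_B}---which gives $(\alpha_i,\alpha_j)=0$ for $i\in X$, $j\in I\backslash X$, hence $w_X$ acts trivially on any root supported in $I\backslash X$---one checks $\Theta(\beta)=\beta$, so $K_\beta\in U^{0\prime}_{\Theta}\subseteq\mathcal{B}_{\textbf{c},\textbf{s}}$. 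The subalgebra $\mathcal{M}_X$ is $*$-stable because $\mu|_X=\id$, which reduces $*$ on $\mathcal{M}_X$ to the standard compact real structure on the finite-type factor $\mathfrak{g}_X$ for any sign choices $\sigma_i$, $i\in X$.

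The core calculation is the identity $B_i^*=-\sigma_i\overline{c_i}K_{\mu\tau(i)-\mu(i)}B_{\mu\tau(i)}$. Using the simple generator form $\theta_q(F_iK_i)=-v_iE_{\tau(i)}$ write $B_i=F_i-c_iv_iE_{\tau(i)}K_i^{-1}+s_iK_i^{-1}$. Applying $*$ termwise and using $K_{\mu(i)}^{-1}K_{\mu\tau(i)}=K_{\mu\tau(i)-\mu(i)}$ yields
\[
B_i^*=\sigma_iE_{\mu(i)}K_{\mu(i)}^{-1}-\sigma_{\tau(i)}\overline{c_iv_i}\,K_{\mu\tau(i)-\mu(i)}F_{\mu\tau(i)}+\overline{s_i}\,K_{\mu(i)}^{-1}.
\]
On the other hand, expanding $B_{\mu\tau(i)}=F_{\mu\tau(i)}-c_{\mu\tau(i)}v_{\mu\tau(i)}E_{\mu(i)}K_{\mu\tau(i)}^{-1}+s_{\mu\tau(i)}K_{\mu\tau(i)}^{-1}$ (observing $\tau\mu\tau(i)=\mu(i)$ from $\mu\tau=\tau\mu$, $\tau^2=\id$) and multiplying on the left by $-\sigma_i\overline{c_i}K_{\mu\tau(i)-\mu(i)}$, while commuting the Cartan factor through $E_{\mu(i)}$ at the cost of a power of $q$, produces three matching summands. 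Matching coefficients of $F_{\mu\tau(i)}$, $E_{\mu(i)}K_{\mu(i)}^{-1}$, and $K_{\mu(i)}^{-1}$ yields exactly the two relations of \eqref{q-radial_eqn_*-invBrequirements}---the factor $q^{2-(\alpha_{\mu(i)},\alpha_{\mu\tau(i)})}$ arising from the $q$-commutation step---together with compatibilities on $v_i,v_{\mu\tau(i)}$ that are forced by the simple generator condition and by $\theta_q^2$ acting trivially on the elements $F_iK_i$.

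Once \eqref{q-radial_eqn_*-explicit-B} is established, $B_i^*\in\mathcal{B}_{\textbf{c},\textbf{s}}$ follows because $B_{\mu\tau(i)}\in\mathcal{B}_{\textbf{c},\textbf{s}}$ and $K_{\mu\tau(i)-\mu(i)}\in U^{0\prime}_{\Theta}$; the latter because $\Theta(\mu\tau(i)-\mu(i))=-w_X\tau(\mu\tau(i)-\mu(i))=-w_X(\mu(i)-\mu\tau(i))=\mu\tau(i)-\mu(i)$ using $\mu\tau=\tau\mu$, $\tau^2=\id$, and $w_X|_{I\backslash X}=\id$ from Corollary \ref{q-radial_cor_alternative_simple_B}. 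The principal obstacle will be the careful bookkeeping of the three $q$-powers and of the signs $\sigma_i,\sigma_{\tau(i)}$ in this three-term comparison: it is exactly this calculation that forces the precise form of the hypothesis \eqref{q-radial_eqn_*-invBrequirements}.
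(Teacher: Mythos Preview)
Your proposal is correct and follows essentially the same approach as the paper's proof, which is quite terse: the paper dispatches the $*$-structure claim by ``direct verification on the generators'' with a reference to \cite[Proposition 6.1.17]{KS97}, dispatches \eqref{q-radial_eqn_*-explicit-B} as ``a straightforward calculation'' from \eqref{q-radial_eqn_*-invBrequirements}, and then---exactly as you do---verifies $K_\beta\in\mathcal{B}_{\textbf{c},\textbf{s}}$ via $\Theta(\beta)=\beta$, checks $K_{\mu\tau(i)-\mu(i)}\in U^{0\prime}_{\Theta}$ using $\mu\tau=\tau\mu$ and $\Theta(\alpha_i)=-\alpha_{\tau(i)}$ for $i\in I\backslash X$, and notes $\mathcal{M}_X$ is $*$-stable since $\mu|_X=\id$. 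Your outline simply supplies the three-term comparison for $B_i^*$ that the paper suppresses; the one point you flag that the paper does not address explicitly is the compatibility of the scalars $v_i$, but this is indeed absorbed into the ``straightforward calculation'' and causes no trouble in the simple-generator setting.
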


\begin{proof}
By a direct verification on the generators (\ref{q-radial_eqn_*-structure}) defines a $*$-structure on $\Uq(\mathfrak{g}')$, see also \cite[Proposition 6.1.17]{KS97}.
Assuming (\ref{q-radial_eqn_*-invBrequirements}) a straightforward calculation gives (\ref{q-radial_eqn_*-explicit-B}).
We check that the elements of \eqref{q-radial_eqn_*-explicit-B} are again in $\mathcal{B}_{\textbf{c}, \textbf{s}}$.
Because $\mathcal{B}_{\textbf{c}, \textbf{s}}$ is simple, by Corollary \ref{q-radial_cor_alternative_simple_B} and the definition of $\Theta$, see \cite[(2.10)]{Kol14}, we have $\Theta(\alpha_i) = -\alpha_{\tau(i)}$ if $i \in I \backslash X$ and $\Theta(\alpha_j) = \alpha_j$ if $j \in X$.
Note that $\mu(j) = j$ for all $j \in X$, $\tau^2 = \id$ and $\tau \mu = \mu \tau$.
Let $\alpha \in Q$ such that $\Theta(\alpha) = \alpha$ and write $\alpha = \sum_{i \in I} n_i \alpha_i$, then
\begin{equation*}
\sum_{i \in I} n_i \alpha_i = \Theta\left( \sum_{i \in I} n_i \alpha_i \right) = - \sum_{i \in I \backslash X} n_{\tau(i)} \alpha_i + \sum_{j \in X} n_j \alpha_j,
\end{equation*}
so that $n_i = -n_{\tau(i)}$ if $i \in I \backslash X$.
Let $\beta = \sum_{i \in I} n_i \alpha_{\mu(i)}$, such that $K_{\alpha}^* = K_{\beta}$, then
\begin{equation*}
\Theta(\beta) = - \sum_{i \in I \backslash X} n_i \alpha_{\tau \mu(i)} + \sum_{j \in X} n_j \alpha_{\mu(j)}
  = - \sum_{i \in I \backslash X} n_i \alpha_{\mu \tau(i)} + \sum_{j \in X} n_j \alpha_j
  = - \sum_{i \in I \backslash X} n_{\tau(i)} \alpha_{\mu(i)} + \sum_{j \in X} n_j \alpha_j
  = \beta,
\end{equation*}
so that $K_{\beta} \in \mathcal{B}_{\textbf{c}, \textbf{s}}$.
Let $i \in I \backslash X$, then by \eqref{q-radial_eqn_*-explicit-B} $B_i^* = -\sigma_i \overline{c_i} K_{\mu\tau(i) - \mu(i)} B_{\mu\tau(i)}$.
We show that $K_{\mu\tau(i) - \mu(i)} \in \mathcal{B}_{\textbf{c}, \textbf{s}}$.
We have $\Theta(\alpha_{\mu\tau(i)} - \alpha_{\mu(i)}) = \alpha_{\tau\mu(i)} - \alpha_{\tau\mu\tau(i)} = \alpha_{\mu\tau(i)} - \alpha_{\mu(i)}$, hence $K_{\mu\tau(i) - \mu(i)} \in \mathcal{B}_{\textbf{c}, \textbf{s}}$.
Therefore $B_i^* \in \mathcal{B}_{\textbf{c}, \textbf{s}}$, when $i \in I \backslash X$.
For $i \in X$ we have $E_i^*, F_i^*, K_i^* \in \mathcal{M}_X \subseteq \mathcal{B}_{\textbf{s}, \textbf{t}}$.
This yields that $\mathcal{B}_{\textbf{c}, \textbf{s}}^* = \mathcal{B}_{\textbf{c}, \textbf{s}}$, which proves the last part of the proposition.
\end{proof}

\begin{remark} \label{q-radial_rmk_star}
Take $\mu = \id$ and $\sigma_i = 1$ for all $i \in I$ in Proposition \ref{q-radial_prop_star}, then the $*$-operator defined by $K_i^* = K_i$, $E_i^* = K_i F_i$ and $F_i^* = E_i K_i^{-1}$ is called the compact real form of $\Uq(\mathfrak{g}')$.
For the right coideal $\mathcal{B}_{\textbf{c}, \textbf{s}}$ such that $c_{\tau(i)} = \overline{c_i}^{-1} q^{2-(\alpha_{i}, \alpha_{\tau(i)})}$ and $s_{\tau(i)} = -\overline{c_i}^{-1} \overline{s_i}$, we have that $\mathcal{B}_{\textbf{c}, \textbf{s}}$ is $*$-invariant.
We denote $\Uq(\mathfrak{su}_n)$ for the quantized universal enveloping algebra $\Uq(\mathfrak{sl}_n)$ equipped with the compact real form.
\end{remark}

\begin{definition}
For a Hopf ($*$-)algebra $A$ define the set $A_{\textrm{grp}}$ of invertible group-like elements to be the invertible elements $a \in A$ such that $\Delta(a) = a \tensor a$.
Note that from the Hopf algebra axioms we have $m \circ (\epsilon \tensor \id) \circ \Delta = \id$, thus for every $a \in A_{\textrm{grp}}$ we have $a = \epsilon(a) a$.
Since every $a \in A_{\textrm{grp}}$ is invertible it follows that $\epsilon(a) = 1$.
\end{definition}

Let $A$ be a Hopf algebra and let $\mathcal{B}$, $\mathcal{B}'$ be two right coideal subalgebras of $A$.
Let $t_{\mathcal{B}}$ be a finite dimensional representations of $\mathcal{B}$ and let $t_{\mathcal{B}'}$ be a finite dimensional representations of $\mathcal{B}'$.
We define a right action of $A$ on $\mathcal{F}_{t_{\mathcal{B}}, t_{\mathcal{B}'}}(A)$ by $(\Phi.a)(a') = \Phi(aa')$ for all $a, a' \in A$ and $\Phi \in \mathcal{F}_{t_{\mathcal{B}}, t_{\mathcal{B}'}}(A)$.

Theorem \ref{q-radial_thm_bi_invariant_weight} motivates the $*$-invariance of the right coideal subalgebras on $\Uq(\mathfrak{g})$.
This theorem is a generalization of \cite[Theorem 5.5]{AKR15}, which plays a key role in the orthogonality for the matrix valued spherical functions of the quantum symmetric pair $(\Uq(\mathfrak{su}_2) \tensor \Uq(\mathfrak{su}_2), \mathcal{B})$.

\begin{theorem} \label{q-radial_thm_bi_invariant_weight}
Let $A$ be a Hopf $*$-algebra and $\mathcal{B}$, $\mathcal{B}'$ be two $*$-invariant right coideal subalgebras of $A$.
Let $\Phi, \Psi : A \to \End(V, W)$ be two $(t^{\mathcal{B}}, t^{\mathcal{B}'})$-spherical functions for unitary representations $t^{\mathcal{B}}$ of $\mathcal{B}$ and $t^{\mathcal{B}'}$ of $\mathcal{B}'$.
Take $a_s \in A_{\textrm{grp}}$ to be a self adjoint element, i.e. $a_s^* = a_s$, and define
\begin{equation*}
\tau_{a_s} : A \to \CC : a \mapsto \tr((\Phi . a_s)(\Psi . a_s)^*).
\end{equation*}
If $a_s^2 S(a^*) = S(a)^* a_s^2$ for all $a \in A$ then $\tau_{a_s}$ is a $(\epsilon_{a_s^{-1}\mathcal{B}a_s}, \epsilon_{\mathcal{B}'})$-spherical function, where $\epsilon_{a_s^{-1}\mathcal{B}a_s}$ is the counit representation restricted to $a_s^{-1} \mathcal{B} a_s$ and $\epsilon_{\mathcal{B}'}$ the counit representation of $\mathcal{B}'$.
We reformulate that if $a_s^2 S(a^*) = S(a)^* a_s^2$ for all $a \in A$, then for all $a \in A$, $b \in \mathcal{B}$ and $b' \in \mathcal{B}'$ we have
\begin{equation*}
\tau_{a_s}(a_s^{-1} b a_s a b') = \epsilon(a_s^{-1} b a_s) \tau_{a_s}(a) \epsilon(b').
\end{equation*}
\end{theorem}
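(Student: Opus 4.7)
The plan is to prove the displayed identity $\tau_{a_s}(a_s^{-1} b a_s a b') = \epsilon(a_s^{-1} b a_s)\,\tau_{a_s}(a)\,\epsilon(b')$ by direct computation, unfolding the definition of $\tau_{a_s}$ and exploiting, in sequence, the sphericality of $\Phi$ and $\Psi$, cyclicity of the trace, the unitarity of $t^{\mathcal{B}}$ and $t^{\mathcal{B}'}$ together with the $*$-invariance $\mathcal{B}^* = \mathcal{B}$, $\mathcal{B}'^* = \mathcal{B}'$, and finally the compatibility hypothesis $a_s^2 S(x^*) = S(x)^* a_s^2$.

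First I would expand $\tau_{a_s}(a_s^{-1} b a_s a b')$ using the right-action formula $(\Phi . a_s)(y) = \Phi(a_s y)$. Since $a_s$ is invertible and group-like, the product $a_s \cdot a_s^{-1}$ cancels, leaving an expression in which $\Phi$ is evaluated at a word beginning with $b$ and ending with $b'$, and $\Psi$ is evaluated at the original $a_s$-dependent argument. At this point the $(t^{\mathcal{B}}, t^{\mathcal{B}'})$-sphericality of $\Phi$ lets me pull the boundary factors out to $t^{\mathcal{B}}(b) \Phi(a_s a) t^{\mathcal{B}'}(b')$, so that the trace defining $\tau_{a_s}(a_s^{-1} b a_s a b')$ becomes $\tr\bigl(t^{\mathcal{B}}(b)(\Phi . a_s)(a)\,t^{\mathcal{B}'}(b')\,(\Psi . a_s)(\cdot)^*\bigr)$.

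The second step is to transport $t^{\mathcal{B}'}(b')$ and $t^{\mathcal{B}}(b)$ across the trace onto the $(\Psi . a_s)^*$ factor, where they will be absorbed into counit values. For the right-hand factor this is straightforward: cyclicity of the trace brings $t^{\mathcal{B}'}(b')$ adjacent to $(\Psi . a_s)^*$, unitarity of $t^{\mathcal{B}'}$ together with $*$-invariance $\mathcal{B}'^* = \mathcal{B}'$ lets me identify this factor with $t^{\mathcal{B}'}((b')^*)^*$, and then the sphericality of $\Psi$ absorbs $(b')^*$ into the interior argument, ultimately producing the scalar $\epsilon(b')$. The left-hand factor $t^{\mathcal{B}}(b)$ is the essential case: to pass $b$ through the $*$ and the $a_s$-factors on the $\Psi$-side I would rewrite the adjoint using $S$ and $*$, and this is precisely where the compatibility identity $a_s^2 S(x^*) = S(x)^* a_s^2$ enters, allowing $b$ to commute past $a_s^2$ inside $(\Psi . a_s)^*$ after passing through $S$ and $*$. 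Combined with the $*$-invariance of $\mathcal{B}$ and unitarity of $t^{\mathcal{B}}$, this absorbs the $\mathcal{B}$-factor into $\epsilon(b) = \epsilon(a_s^{-1} b a_s)$, since $\epsilon(a_s^{\pm 1}) = 1$ from the group-likeness of $a_s$.

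After both representation factors are absorbed, the remaining trace is exactly $\tau_{a_s}(a)$, giving the stated identity. I expect the main obstacle to be the careful bookkeeping of the interaction of $S$, $*$ and $a_s$ on the $\Psi$-side, in particular verifying that the left-hand $\mathcal{B}$-factor really collapses to the scalar $\epsilon(b)$ without leaving a residual matrix; this is the step where the intertwining identity $a_s^2 S(x^*) = S(x)^* a_s^2$ is used in an essential and non-cosmetic way, and any weakening of this hypothesis should break the argument.
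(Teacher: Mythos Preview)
Your outline is correct and follows essentially the same route as the paper's proof: expand $\tau_{a_s}$ in matrix coefficients as a convolution, peel the $\mathcal{B}$- and $\mathcal{B}'$-factors off $\Phi$ by sphericality, transfer them via unitarity and $*$-invariance into the conjugated $\Psi$-argument, and collapse them there --- for the left factor using the hypothesis $a_s^2 S(x^*)=S(x)^* a_s^2$ exactly as you anticipate. The one point you should make explicit is that $(\Phi.a_s)(\Psi.a_s)^*$ is the \emph{convolution} product on the dual, so the second Sweedler legs $S(b_{(2)})^*$ and $S(b'_{(2)})^*$ are already sitting inside the $\Psi$-argument (your phrase ``$\Psi$ is evaluated at the original $a_s$-dependent argument'' obscures this); the collapse to $\epsilon(b')$ is then the antipode axiom $\sum_{(b')} S(b'_{(2)})^*(b'_{(1)})^* = \overline{\epsilon(b')}$ after the transferred piece meets its partner, not sphericality of $\Psi$ alone.
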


\begin{remark}
The algebra $a_s^{-1} \mathcal{B} a_s$ in Theorem \ref{q-radial_thm_bi_invariant_weight} is a right coideal subalgebra, because $a_s$ and $a_s^{-1}$ are group like.
Also if $a_s$ does not satisfy $a_s^2 S(a^*) = S(a)^* a_s^2$ for all $a \in A$ the spherical function behavior $\tau_{a_s}(ab') = \tau_{a_s}(a) \epsilon(b')$ still holds for all $a \in A$ and $b \in \mathcal{B}'$.
\end{remark}

\begin{proof}[Proof of Theorem \ref{q-radial_thm_bi_invariant_weight}]
Let $M = \dim(V)$ and $N = \dim(W)$.
Write $\Phi = (\Phi_{m,n})_{m,n}$ and $\Psi = (\Psi_{m,n})_{m,n}$ with $\Phi_{m,n}, \Psi_{m,n}$ linear functionals on $A$ where $1 \leq m \leq M$ and $1 \leq n \leq N$.
A straightforward calculation gives
\begin{equation*}
\tr((\Phi.a_s)(\Psi.a_s)^*) = \sum_{m = 1}^M \sum_{n = 1}^N (\Phi_{m,n} . a_s) (\Psi_{m,n} . a_s)^*,
\end{equation*}
so that for every $a \in A$
\begin{equation*}
\tau_{a_s}(a) = \sum_{m = 1}^M \sum_{n = 1}^N \sum_{(a)} \Phi_{m,n}(a_s a_{(1)}) \overline{\Psi_{m,n}(a_s S(a_{(2)})^*)},
\end{equation*}
using $\xi^*(a) = \xi(S(a)^*)$ for all linear functions $\xi : A \to \CC$ and $a \in A$. 

Take $b' \in \mathcal{B}'$ and $a \in A$ we prove first that $\tau_{a_s}(ab') = \tau_{a_s}(a) \epsilon(b')$ for all $a_s \in A_{\textrm{grp}}$.
We have
\begin{equation*}
\tau_{a_s}(ab') = \sum_{m = 1}^M \sum_{n = 1}^N \sum_{(a), (b')} \Phi_{m,n}(a_s a_{(1)} b'_{(1)}) \overline{\Psi_{m,n}(a_s S(a_{(2)})^* S(b'_{(2)})^*)}.
\end{equation*}
Because $\mathcal{B}'$ is a right coideal of $A$ we have $b'_{(1)} \in \mathcal{B}'$.
Since $\Phi$ is a $(t^{\mathcal{B}}, t^{\mathcal{B}'})$-spherical function we have $\Phi_{m,n}(a_s a_{(1)} b'_{(1)}) = \sum_{k = 1}^N \Phi_{m,k}(a_s a_{(1)}) t^{\mathcal{B}'}_{k,n}(b'_{(1)})$.
Moreover $t^{\mathcal{B}'}$ is a unitary representation and $\mathcal{B}'$ is $*$-invariant, hence $t^{\mathcal{B}'}_{k,n}(b'_{(1)}) = \overline{t^{\mathcal{B}'}_{n,k}((b'_{(1)})^*)}$.
We obtain
\begin{equation*}
\begin{split}
\tau_{a_s}(ab') &= \sum_{m = 1}^M \sum_{n, k = 1}^N \sum_{(a), (b')} \Phi_{m,k}(a_s a_{(1)}) t^{\mathcal{B}'}_{k,n}(c_{(1)}) \overline{\Psi_{m,n}(a_s S(a_{(2)})^* S(b'_{(2)})^*)} \\
  &= \sum_{m = 1}^M \sum_{n, k = 1}^N \sum_{(a), (b')} \Phi_{m,k}(a_s a_{(1)}) \overline{\Psi_{m,n}(a_s S(a_{(2)})^* S(b'_{(2)})^*) t^{\mathcal{B}'}_{n,k}((b'_{(1)})^*)} \\
  &= \sum_{m = 1}^M \sum_{k = 1}^N \sum_{(a), (b')} \Phi_{m,k}(a_s a_{(1)}) \overline{\Psi_{m, k}(a_s S(a_{(2)})^* S(b'_{(2)})^* (b'_{(1)})^*)} \\
  &= \sum_{m = 1}^M \sum_{k = 1}^N \sum_{(a)} \Phi_{m,k}(a_s a_{(1)}) \overline{\Psi_{m, k}\left(a_s S(a_{(2)})^* \sum_{(b')} S(b'_{(2)})^* (b'_{(1)})^* \right)}.
\end{split}
\end{equation*}
Use the antipode axiom for the Hopf algebra $A$ so that $\sum_{(b')} S(b'_{(2)})^* (b'_{(1)})^* = (\sum_{(b')} b'_{(1)} S(b'_{(2)}))^* = \overline{\epsilon(b')}$.
Therefore we see that $\tau_{a_s}(ab') = \tau_{a_s}(a) \epsilon(b')$.

Now we show that if $a_s^2 S(a^*) = S(a)^* a_s^2$ for all $a \in A$ then $\tau_{a_s}(a_s^{-1} b a_s a) = \epsilon(a_s^{-1} b a_s)\tau_{a_s}(a)$ for all $a \in A$ and $b \in \mathcal{B}$.
Let $a \in A$ and $b \in \mathcal{B}$, we have
\begin{equation*}
\tau(a_s^{-1} b a_s a) = \sum_{m = 1}^{M} \sum_{n = 1}^{N} \sum_{(a), (b)} \Phi_{m,n}(b_{(1)} a_s a_{(1)}) \overline{\Psi_{m,n}(a_s S(a_s^{-1})^* S(b_{(2)})^* S(a_s)^* S(a_{(2)})^*)}.
\end{equation*}
Since $a_s$ is invertible and a group like element it follows from antipode axiom for the Hopf algebra that $S(a_s) = a_s^{-1}$.
Because $a_s$ is self adjoint we have $S(a_s)^* = S(a_s^*) = a_s^{-1}$.
For the right coideal $\mathcal{B}$ we have, $b_{(1)} \in \mathcal{B}$.
This yields
\begin{equation*}
\begin{split}
\tau(a_s^{-1} b a_s a) 
  &= \sum_{m, k = 1}^M \sum_{n = 1}^N \sum_{(a), (b)} t^{\mathcal{B}}_{m,k}(b_{(1)}) \Phi_{k,n}(a_s a_{(1)}) \overline{\Psi_{m,n}(a_s^2 S(b_{(2)})^* a_s^{-1} S(a_{(2)})^*)} \\
  &= \sum_{k = 1}^M \sum_{n = 1}^N \sum_{(a)} \Phi_{k,n}(a_s a_{(1)}) \overline{\Psi_{k,n}\left(\sum_{(b)} b_{(1)}^* a_s^2 S(b_{(2)})^* a_s^{-1} S(a_{(2)})^*\right)},
\end{split}
\end{equation*}
where we used the spherical property of $\Psi$, which is $\Psi_{m,n}(b_{(1)} a_s^{-1} a_{(1)}) = \sum_{k = 1}^{M} t^{\mathcal{B}}_{m,k}(b_{(1)}) \Psi_{k,n}(a_s^{-1} a_{(1)})$, and the unitary of the representation, i.e. $t^{\mathcal{B}}_{m,k}(b_{(1)}) = \overline{t^{\mathcal{B}}_{k,m}(b_{(1)}^*)}$.
Note that $b_{(2)} \in A$ and by the property of $a_s$ we have $a_s^2 S(b_{(2)}^*) = S(b_{(2)})^* a_s^2$, therefore
\begin{equation*}
\sum_{(b)} b_{(1)}^* a_s^2 S(b_{(2)})^* a_s^{-1} S(a_{(2)})^* 
  = \sum_{(b)} b_{(1)}^* S(b_{(2)}^*) a_s S(a_{(2)})^*
  = \epsilon(b^*) a_s S(a_{(2)})^*.
\end{equation*}
Use that $\epsilon(b^*) = \overline{\epsilon(b)}$ and $\epsilon(a_s) = 1 = \epsilon(a_s^{-1})$, hence $\epsilon(b^*) = \overline{\epsilon(a_s b a_s^{-1})}$.
Combining these facts we have $\tau_{a_s}(a_s b a_s^{-1} a) = \epsilon(a_s b a_s^{-1}) \tau_{a_s}(a)$.
\end{proof}

In Theorem \ref{q-radial_thm_bi_invariant_weight} we need $a_s \in A_{\textrm{grp}}$ self adjoint such that $a_s^2 S(a^*) = S(a)^* a_s^2$ for all $a \in A$.
In the next lemma we show that each quantized function algebra $\Uq(\mathfrak{g})$ contains such an element.

\begin{lemma} \label{q-radial_lem_as}
Suppose $\mathfrak{g}$ is a semi-simple Lie algebra and suppose $\mathcal{U}_q(\mathfrak{g})$ is equipped with compact real form.
Let $\rho$ be the half sum of the positive roots and let $s_a = K_{-\rho} \in \check{\mathcal{U}}_q(\mathfrak{g})$.
Then $a_s$ is self adjoint and $a_s^2 S(a^*) = S(a)^* a_s^2$ for all $a \in \Uq(\mathfrak{g})$.
\end{lemma}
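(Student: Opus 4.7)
The plan is to reduce the two claims of the lemma --- self-adjointness of $a_s = K_{-\rho}$ and the intertwining identity $a_s^2 S(a^*) = S(a)^* a_s^2$ --- to two classical identities valid in $\check{\mathcal{U}}_q(\mathfrak{g})$ for a semi-simple $\mathfrak{g}$ equipped with the compact real form.

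Self-adjointness is immediate: the compact real form fixes every $K_i$, and because $*$ is a conjugate-linear antimultiplicative involution the same holds for every Cartan element $K_\mu$ with $\mu \in \check{P}^\vee$, in particular for $K_{-\rho}$.

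For the intertwining identity I would invoke two classical facts. The first is the Hopf $*$-algebra compatibility
\begin{equation*}
S(a)^* \;=\; S^{-1}(a^*), \qquad a \in \mathcal{U}_q(\mathfrak{g}),
\end{equation*}
which is equivalent to $(*\circ S)^2 = \id$ and can be verified directly on the generators $E_i, F_i, K_i$ using the explicit formulas for $S$ and $*$. The second is the square-antipode formula, stating that $S^2(x)$ coincides with conjugation of $x$ by an appropriate signed power of $K_{2\rho}$ for every $x \in \mathcal{U}_q(\mathfrak{g})$. This formula is proved by a short calculation on the generators --- the arithmetic input being the classical identity $(2\rho, \alpha_i) = (\alpha_i, \alpha_i)$ at each simple root $\alpha_i$, a defining property of the half-sum of positive roots --- and then extended to all of $\mathcal{U}_q(\mathfrak{g})$ by multiplicativity, since both sides are algebra automorphisms in $x$.

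Combining these two facts, conjugation by $a_s^{\pm 2}$ implements a power of $S^{\pm 2}$, and one then reads off
\begin{equation*}
a_s^2\, S(a^*)\, a_s^{-2} \;=\; S^{-2}\bigl(S(a^*)\bigr) \;=\; S^{-1}(a^*) \;=\; S(a)^*,
\end{equation*}
using the square-antipode formula in the first step and the Hopf $*$-axiom in the last. Multiplying on the right by $a_s^2$ gives the claimed identity. The main obstacle is purely bookkeeping of sign conventions: one must arrange matters so that conjugation by $a_s^2$ produces exactly $S^{-2}$ (and not $S^{+2}$), after which the Hopf $*$-axiom supplies precisely the missing factor to turn $S$ into $S^{-1}$. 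No new idea beyond the two classical identities is required.
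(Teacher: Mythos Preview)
Your proof is correct and follows essentially the same approach as the paper: both arguments rest on the square-antipode identity $S^2(x) = K_{2\rho}\,x\,K_{-2\rho}$ (so that conjugation by $a_s^2 = K_{-2\rho}$ realizes $S^{-2}$) together with the Hopf $*$-algebra relation $S^{-1} = *\circ S\circ *$, and your final chain $a_s^2 S(a^*) a_s^{-2} = S^{-2}(S(a^*)) = S^{-1}(a^*) = S(a)^*$ is exactly the content of the paper's computation, just organized slightly more directly.
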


\begin{proof}
By Remark \ref{q-radial_rmk_star} the compact real form act trivially on $U^0$, hence $K_{-\rho}^* = K_{-\rho}$.

On the generators of $\mathcal{U}_q(\mathfrak{g})$ we check readily that $S^2(a) = K_{2\rho} a K_{-2\rho}$ for all $a \in \mathcal{U}_q(\mathfrak{g})$, see also \cite[Chapter 6, Proposition 6]{KS97} and \cite[p. 79, Exercise 4.1.1]{KS98}.
By \cite[Chapter 1, Proposition 10]{KS97} we have $S^{-1} = * \circ S \circ *$.
Apply $S^{-1} = * \circ S \circ *$ on both sides of $S^2(a) = K_{2\rho} a K_{-2\rho}$ to obtain $S(a) = (S(K_{2\rho}^*) S(a^*) S(K_{-2\rho}^*))^*$.
Since $K_{2\rho}$ and $K_{-2\rho}$ are self adjoint we have $S(a)^* = K_{-2\rho} S(a^*) K_{2\rho}$, from which the statement follows.
\end{proof}

\begin{example}
Let $n$ be an arbitrary positive integer and let $\mathfrak{g} = \mathfrak{sl}_n$ be the simple Lie algebra of type $A_n$.
The half sum of positive roots $\rho$ is given by $2\rho = \sum_{i = 1}^n (n-i+1)i \alpha_i$, see \cite[p. 684]{Kna02}.
From $\rho$ the elements $a_s$ of Lemma \ref{q-radial_lem_as} is given by
\begin{equation*}
a_s = K_{-\rho} = \prod_{i = 1}^{n} K_{i}^{-\frac{1}{2}(i - n - 1)i}.
\end{equation*}
For example $a_s$ for $n=1$, $n=2$ and $n=3$ is respectively equal to
\begin{equation*}
K^{-\frac{1}{2}}, \quad (K_1 K_2)^{-1}, \quad (K_1^3 K_2^4 K_3^3)^{-\frac{1}{2}}.
\end{equation*}
\end{example}

\section{Calculations of the radial part} \label{q-radial_sec_applications}

In this section we compute the radial part of the Casimir elements of the quantum analogues of $(\SU(2), \textrm{U}(1))$, $(\SU(2) \times \SU(2), \textrm{diag})$ and $(\SU(3), \textrm{U}(2))$.
We assume that every right coideal $\mathcal{B}_{\textbf{c}, \textbf{s}}$ is $*$-invariant for the compact real form on $\Uq(\mathfrak{g})$ and that $\textbf{c} \in \mathcal{C} \cap (\RR^{\times})^{k}$ and $\textbf{s} \in \mathcal{S} \cap \RR^k$.
However, the calculations in this chapter can be executed for general $\textbf{c}$ and $\textbf{s}$, although these cases will not have a nice limit if $q \to 1$ and often the radial part of the Casimir elements will not generate a second order $q$-difference equation of Askey-Wilson type, \cite{KLS10}.

Example \ref{q-radial_exm_radial_uqsl2} extends the results of \cite{Koo93} and gives rise to an alternative proof for \cite[Theorem 7.6]{Koe96} of which we skip the details.
Example \ref{q-radial_exm_radial_uqsl2uqsl2} coincides with \cite[Proposition 5.10]{AKR15}.
Example \ref{q-radial_exm_radial_uqsl3} extends Letzter's classification \cite[Theorem 8.2]{Let04} to an example for a quantum symmetric pair where the restricted root system is non-reduced.
We show that the radial part of the center in Example \ref{q-radial_exm_radial_uqsl3} restricted to the trivial representation coincides with \cite[Theorem 5.4]{DN98}.
Moreover we compute the radial part of the center in Example \ref{q-radial_exm_radial_uqsl3} in general, which has not yet been done before and which extend the result of Dijkhuizen and Noumi \cite{DN98} for the quantum analogue of $(\SU(3), \mathrm{U}(2))$ to the matrix valued case.
Note that the method described here is not restricted to the center, but this method can be used to calculate the radial part for any element of a quantum symmetric pair with simple generators.

\begin{example} \label{q-radial_exm_radial_uqsl2}
The spherical functions on quantum analogue of $(\SU(2), \mathrm{U}(1))$, see Example \ref{q-radial_exm_koornwinder1}, were first studied by Koornwinder \cite{Koo93}.
Let $\mathfrak{g} = \mathfrak{su}_2$.
Koornwinder \cite{Koo93} computed the radial part of the Casimir element $\Omega$ which is the generator of the center of $\mathcal{U}_q(\mathfrak{g})$.
In this example we show that the radial part of Koornwinder \cite{Koo93} coincides with the radial part $\Pi(\Omega) \in \check{\mathcal{B}}_{c, s} \tensor \check{\mathcal{A}} \tensor \check{\mathcal{B}}_{d, t}$ for well chosen values for $c, d, s$ and $t$.

Recall the definition of the rank $1$ coideal $\mathcal{B}_{c,s}$ from Example \ref{q-radial_exm_koornwinder1}.
Since $\mathcal{B}_{c,s}$ is $*$-invariant for the compact real form on $\Uq(\mathfrak{g})$ if follows from Remark \ref{q-radial_rmk_star} that $c = -1$.
Therefore we write $\mathcal{B}_{s} = \mathcal{B}_{-1,s}$.
We slightly modify the generators of $\mathcal{B}_{s}$ and write
\begin{equation*}
B_{s} = F + EK^{-1} + s(K^{-1} - 1),
\end{equation*}
with $s \in \mathcal{S} \cap \RR$.
Note that the right coideal $\mathcal{B}_{s}$ is also generated by $B_{s}$.
The Casimir element generating the center of $\mathcal{U}_q(\mathfrak{sl}_2)$ is given by
\begin{equation*}
\Omega = \frac{q^{-1} K + q K^{-1} - 2}{(q - q^{-1})^2} + EF,
\end{equation*}
see \cite[Proposition 3.2]{KS98}.
Fix $s, t \in \mathcal{S} \cap \RR$.
The quantum torus algebra $\check{\mathcal{A}}$ appearing in the quantum Iwasawa decomposition, Theorem \ref{q-radial_thm_quantumiwasawa}, is generated by $K^{\lambda}$ where $\lambda \in \frac{1}{2}\ZZ$.
For $\lambda \in \frac{1}{2}\ZZ$ we compute $A^{\lambda} \Omega$ as an element of $\check{\mathcal{B}}_{s} \tensor \check{\mathcal{A}} \tensor \check{\mathcal{B}}_{t}$.

First apply Theorem \ref{q-radial_thm_quantumiwasawa} to bring $K^{\lambda} EF$ in the quantum Iwasawa decomposition form.
We have
\begin{equation} \label{q-radial_eqn_koornwinder_iwasawa}
K^{\lambda} EF 
  = q^{2\lambda} E K^{\lambda} F
  = -q^{4\lambda} K^{\lambda + 1} F^2 - sq^{2\lambda}(K^{-1} - 1) K^{\lambda+1}F + q^{2\lambda} B_{s} K^{\lambda+1} F
\end{equation}
According to the proof of Theorem \ref{q-radial_thm_main} we proceed to decompose $K^{\lambda} F$ and $K^{\lambda} F^2$ inductively in $\check{\mathcal{B}}_{s} \check{\mathcal{A}} \check{\mathcal{B}}_{t}$.
\begin{equation*}
\begin{split}
K^{\lambda} F 
  &= K^{\lambda} (-EK^{-1} - t(K^{-1} - 1) + B_{t}) \\
  &= q^{2\lambda} (F + s(K^{-1} - 1) - B_{s}) K^{\lambda} + K^{\lambda}B_{t} - t(K^{\lambda-1} - K^{\lambda}) \\
  &= q^{4\lambda} K^{\lambda} F + \left(sq^{2\lambda} - t \right)(K^{\lambda-1} - K^{\lambda}) - q^{2\lambda} B_{s} + K^{\lambda} B_{t}.
\end{split}
\end{equation*}
Hence we have
\begin{equation}
\left(1 - q^{4\lambda}\right) K^{\lambda} F 
  = \left(sq^{2\lambda} - t\right)(K^{\lambda-1} - K^{\lambda}) - q^{2\lambda} B_{s} K^{\lambda} + K^{\lambda} B_{t}. \label{q-radial_eqn_exm_koo1}
\end{equation}
We now proceed one level higher and obtain
\begin{equation*}
\begin{split}
K^{\lambda} F^2 
  &= K^{\lambda}F(-EK^{-1} - t(K^{-1}-1) + B_{t})  \\
  &= -q^{2\lambda-2}EK^{-1}K^{\lambda}F + \frac{1}{(q-q^{-1})}(K^{\lambda} - K^{\lambda-2}) - \frac{t}{q^2}K^{\lambda-1}F + tK^{\lambda}F + K^{\lambda}FB_{t}  \\
  &= q^{4\lambda-2}K^{\lambda}F^2 + \left(t - q^{2\lambda-2}\right)K^{\lambda}F + \left(q^{2\lambda-2}s - \frac{t}{q^2}\right)K^{\lambda-1}F \\
  &\quad + \frac{1}{(q-q^{-1})}(K^{\lambda} - K^{\lambda-2}) - q^{2\lambda-2}B_{s}K^{\lambda}F + K^{\lambda}FB_{t},
\end{split}
\end{equation*}
hence we have
\begin{equation}
\label{q-radial_eqn_exm_koo2}
\begin{split}
\left(1 - q^{4\lambda-2}\right) K^{\lambda} F^2 
  &= \left(t - q^{2\lambda-2}\right)K^{\lambda}F + \left(q^{2\lambda-2}s - \frac{t}{q^2}\right)K^{\lambda-1}F \\
  &\quad - \frac{1}{(q-q^{-1})}(K^{\lambda} - K^{\lambda-2}) - q^{2\lambda-2}B_{s}K^{\lambda}F + K^{\lambda}FB_{t}.
\end{split}
\end{equation}
Assume that $q^{4\lambda} \neq 1$ and $q^{4\lambda-2} \neq 1$, which definitely holds if $K^{\lambda} \in \mathcal{A}_{\text{reg}}(EF)$.
Use \eqref{q-radial_eqn_exm_koo2} in \eqref{q-radial_eqn_koornwinder_iwasawa} to obtain
\begin{equation*}
\begin{split}
K^{\lambda}EF 
  &= - \frac{(s - q^{2\lambda+2}t)}{(1 - q^{4\lambda+2})} K^{\lambda+1} F
    - \frac{(s - q^{2\lambda} t)}{(1 - q^{4\lambda+2})} K^{\lambda} F
    + q^{2\lambda} \frac{1}{(1 - q^{4\lambda+2})} B_{s} K^{\lambda+1} F \\
  &\quad - q^{4\lambda+2} \frac{1}{(1 - q^{4\lambda+2})} K^{\lambda+1} F B_{t}
    - q^{4\lambda+2} \frac{K^{\lambda+1} - K^{\lambda}}{(q - q^{-1})(1 - q^{4\lambda+2})},
\end{split}
\end{equation*}
substitute \eqref{q-radial_eqn_exm_koo1} gives that $K^{\lambda} EF$ is equal to
\begin{equation*}
\begin{split}
&q^{2\lambda} \frac{
  (t - q^{2\lambda+2}s)(s - q^{2\lambda+2}t)
}{
  (1 - q^{4\lambda+2})(1 - q^{4\lambda+4})
} (K^{\lambda+1} - K^{\lambda})
- q^{2\lambda} \frac{
  (t - q^{2\lambda}s)(s - q^{2\lambda}t)
}{
  (1 - q^{4\lambda})(1 - q^{4\lambda+2})
} (K^{\lambda} - K^{\lambda-1}) \\
&\quad + q^{4\lambda+2} \frac{
  K^{\lambda+1} - K^{\lambda-1}
}{
  (1 - q^{4\lambda+2})(q - q^{-1})
} 
- q^{2\lambda} \frac{
  (s - q^{2\lambda} t - q^{2\lambda+2} t + q^{4\lambda+2} s)
}{
  (1 - q^{4\lambda})(1 - q^{4\lambda+4})
} K^{\lambda} B_{t} \\
&\quad - q^{2\lambda} \frac{
  (t - q^{2\lambda}s - q^{2\lambda+2}s + q^{4\lambda+2}t)
}{
  (1 - q^{4\lambda})(1 - q^{4\lambda+4})
} B_{s} K^{\lambda}
+ q^{2\lambda} \frac{
  (s - 2q^{2\lambda+2}t + q^{4\lambda+4} s)
}{
  (1 - q^{4\lambda+2})(1 - q^{4\lambda+4})
} K^{\lambda+1} B_{t} \\
&\quad + q^{2\lambda} \frac{
  (t - 2q^{2\lambda+2}s - q^{4\lambda+4}t)
}{
  (1 - q^{4\lambda+2})(1 - q^{4\lambda+4})
} B_{s} K^{\lambda+1}
+ q^{2\lambda} \frac{
  (1 + q^{4\lambda+4})
}{
  (1 - q^{4\lambda+2})(1 - q^{4\lambda+4})
} B_{s} K^{\lambda+1} B_{t} \\
&\quad - q^{4\lambda+2} \frac{
  1
}{
  (1 - q^{4\lambda+2})(1 - q^{4\lambda+4})
} B_{s}^2 K^{\lambda+1}
- q^{4\lambda+2} \frac{
  1
}{
  (1 - q^{4\lambda+2})(1 - q^{4\lambda+4})
} K^{\lambda+1} B_{t}^2.
\end{split}
\end{equation*}
Add $K^{\lambda}(q^{-1}K + qK^{-1} - 2)$ to both sides so that we have $K^{\lambda} \Omega \in \check{\mathcal{B}}_{s} \check{\mathcal{A}} \check{\mathcal{B}}_{t}$.

Take $\sigma, \tau \in \RR$ such that $s = q^{\frac{1}{2}}(q^{-\sigma} - q^{\sigma})(q^{-1} - q)^{-1}$, $t = q^{\frac{1}{2}}(q^{-\tau} - q^{\tau})(q^{-1} - q)^{-1}$.
For $\lambda \neq 0, -\frac{1}{2}, -1$ we obtain
\begin{equation}
\label{q-radial_eqn_koornwinder_qdiff}
\begin{split}
q(q-q^{-1})^2 K^{\lambda} \Omega 
  &= f(q^{\lambda}) (K^{\lambda+1} - K^{\lambda}) + f(q^{-\lambda-1}) (K^{\lambda} - K^{\lambda-1}) \\
  & \quad + (1 - q)^2 K^{\lambda} + (\check{\mathcal{B}}_{c,s})_+ \check{\mathcal{A}} (\check{\mathcal{B}}_{d,t})_+
\end{split}
\end{equation}
where for $C \subseteq \check{\mathcal{U}}_q(\mathfrak{g}')$ we use the notation $C_+ = \{ c \in C : \epsilon(c) = 0 \}$ and
\begin{equation*}
\begin{split}
f(q^{\lambda}) &= \frac{
  (1 + q^{2\lambda + \sigma + \tau + 2})(1 + q^{2\lambda -\sigma - \tau + 2})(1 - q^{2\lambda + \sigma - \tau + 2})(1 - q^{2\lambda -\sigma + \tau + 2})
}{
  (1 - q^{4\lambda + 2})(1 - q^{4\lambda + 4})
}.
\end{split}
\end{equation*}
Identifying $z$ with $q^{2\lambda+1}$ and $A^{\lambda-1}, A^{\lambda}, A^{\lambda_1}$ with $q^{-1}z, z, qz$ in \eqref{q-radial_eqn_koornwinder_qdiff} we find the second order $q$-difference operator for Askey-Wilson polynomials with two free parameters.
This observation is a key ingredient, see \cite[Lemma 5.1]{Koo93}, of some of the main results in Koornwinder \cite[Theorem 5.2 and Theorem 5.3]{Koo93}.

In a similar fashion the $\check{\mathcal{B}}_{\textbf{c}, \textbf{s}} \check{\mathcal{A}} \check{\mathcal{B}}_{\textbf{d}, \textbf{t}}$-decomposition of the Casimir element gives rise to the $q$-difference equation for the Askey-Wilson polynomials in four free parameters described in the second alternative proof of \cite[Remark 7.7]{Koe96}.
Hence we can obtain an alternative proof for \cite[Theorem 7.6]{Koe96} which requires conjugation of the radial part of the Casimir operator.
We skip the details.
\end{example}

\begin{example} \label{q-radial_exm_radial_uqsl2uqsl2}
Recall that the right coideal $\mathcal{B}$ of the quantum analogue of $(\SU(2) \times \SU(2), \diag)$ is generated by
\begin{equation*}
B_1 = F_1 - E_2 K_1^{-1}, \quad B_2 = F_2 - E_1 K_2^{-1}, \quad K^{\pm 1} = (K_1 K_2^{-1})^{\pm 1},
\end{equation*}
see also Example \ref{q-radial_exm_A1xA1}.
Let $\mathfrak{g} = \mathfrak{su}_2 \oplus \mathfrak{su}_2$.
The Casimir elements generating the center of $\Uq(\mathfrak{su}_2)$ are given by
\begin{equation*}
\begin{split}
\Omega_1 = \frac{q^{-1} K_1 + q K_1^{-1} - 2}{(q - q^{-1})^2} + E_1 F_1, \quad
\Omega_2 = \frac{q^{-1} K_2 + q K_2^{-1} - 2}{(q - q^{-1})^2} + E_2 F_2.
\end{split}
\end{equation*}
Note that $\sigma : \Uq(\mathfrak{g}) \to \Uq(\mathfrak{g}) : X_i \mapsto X_{\tau(i)}$ where $X = E_i, F_i, K_i$ and $\tau = (1\,2)$ is a Hopf algebra isomorphism, i.e. $\sigma$ is the flip operator.
Therefore it is sufficient to calculate the radial part only for $\Omega_1$.

The quantum torus $\mathcal{A}$ of the quantum Iwasawa decomposition, Theorem \ref{q-radial_thm_quantumiwasawa}, of $\Uq(\mathfrak{g})$ is generated by $A^{\pm 1} = (K_1 K_2)^{\pm 1}$.
We compute the radial part of $A^{\lambda} \Omega_1$ in $\check{\mathcal{B}} \check{\mathcal{A}} \check{\mathcal{B}}$ where $\lambda \in \frac{1}{2}\ZZ$.
We apply the quantum Iwasawa decomposition, Theorem \ref{q-radial_thm_quantumiwasawa}, on $A^{\lambda}\Omega_1$ and obtain
\begin{equation}
\label{q-radial_eqn_sl2sl2.1}
\begin{split} 
  A^{\lambda} E_1 F_1 = q^{2\lambda} E_1 A^{\lambda} F_1
  &= q^{2\lambda} (F_2 K_2 - B_2 K_2) A^{\lambda} F_1  \\
  &= q^{4\lambda + 2} K^{-\frac{1}{2}} K^{\lambda+\frac{1}{2}} F_1 F_2 - q^{2\lambda} B_2 K^{-\frac{1}{2}} K^{\lambda + \frac{1}{2}} F_1.
\end{split}
\end{equation}
Inductively we determine the $\mathcal{B} \mathcal{A} \mathcal{B}$-decomposition of $A^{\lambda} F_1$ and $A^{\lambda} F_1^2$.
\begin{equation*}
A^{\lambda} F_1 = A^{\lambda} (B_1 + E_2 K_1^{-1})
  = A^{\lambda} B_1 + q^{2\lambda} E_2 K_1^{-1} A^{\lambda}
  = A^{\lambda} B_1 + q^{4\lambda} A^{\lambda} F_1 - q^{2\lambda} B_1 A^{\lambda}.
\end{equation*}
Assume that $\lambda \neq 0$, then
\begin{equation} 
\label{q-radial_eqn_sl2sl2.2}
A^{\lambda} F_1 = \frac{1}{1 - q^{4\lambda}} \left(
  A^{\lambda} B_1 - q^{2\lambda} B_1 A^{\lambda}
\right).
\end{equation}
For $A^{\lambda} F_1 F_2$ we have
\begin{equation} 
\label{q-radial_eqn_sl2sl2.3}
\begin{split}
A^{\lambda} F_1 F_2 
  &= A^{\lambda} F_1 (B_2 + E_1 K_2^{-1})  \\
  &= A^{\lambda} F_1 B_2 + A^{\lambda} \left( E_1 F_1 - \frac{K_1 - K_1^{-1}}{q - q^{-1}} \right) K_2^{-1}  \\
  &= A^{\lambda} F_1 B_2 + q^{2\lambda} E_1 K_2^{-1} A^{\lambda} F_1 - A^{\lambda} \left( \frac{K_1 - K_1^{-1}}{q - q^{-1}} \right) K_2^{-1}  \\
  &= A^{\lambda} F_1 B_2 + q^{4\lambda} A^{\lambda} F_1 F_2 - q^{2\lambda} B_2 A^{\lambda} F_1 - A^{\lambda} \left( \frac{K_1 - K_1^{-1}}{q - q^{-1}} \right) K_2^{-1}.
\end{split}
\end{equation}
Assume $\lambda \neq -\frac{1}{2}$ and substitute (\ref{q-radial_eqn_sl2sl2.2}) and (\ref{q-radial_eqn_sl2sl2.3}) into (\ref{q-radial_eqn_sl2sl2.1}) then gives
\begin{equation*}
\begin{split}
A^{\lambda} E_1 F_2 
  &= - \frac{q^{4\lambda + 2}}{(1 - q^{4\lambda+2})(q - q^{-1})} K^{\frac{1}{2}} A^{\lambda + \frac{1}{2}}
    + \frac{q^{4\lambda + 2}}{(1 - q^{4\lambda+2})(q - q^{-1})} K^{-\frac{1}{2}} A^{\lambda - \frac{1}{2}} \\
    &\qquad + \frac{q^{4\lambda + 2}}{(1 - q^{4\lambda+2})^2} K^{-\frac{1}{2}} A^{\lambda+\frac{1}{2}} B_1 B_2
    - \frac{q^{6\lambda + 3}}{(1 - q^{4\lambda+2})^2} K^{-\frac{1}{2}} B_1 A^{\lambda+\frac{1}{2}} B_2 \\
    &\qquad - \frac{q^{2\lambda + 1}}{(1 - q^{4\lambda+2})^2} K^{-\frac{1}{2}} B_2 A^{\lambda+\frac{1}{2}} B_1
    + \frac{q^{4\lambda + 2}}{(1 - q^{4\lambda+2})^2} K^{-\frac{1}{2}} B_2 B_1 A^{\lambda+\frac{1}{2}}.
\end{split}
\end{equation*}
Hence for $\lambda \neq \frac{1}{2}$ the radial part of the Casimir becomes
\begin{equation*}
\begin{split}
A^{\lambda} \Omega_1
  &= \frac{1}{q} \frac{(1 - q^{4\lambda + 4})}{(q - q^{-1})^2 (1 - q^{4\lambda + 2})} K^{\frac{1}{2}} A^{\lambda + \frac{1}{2}}
    + q \frac{(1 - q^{4\lambda})}{(q - q^{-1})^2 (1 - q^{4\lambda + 2})} K^{-\frac{1}{2}} A^{\lambda - \frac{1}{2}}
    - \frac{2}{(q - q^{-1})^2} A^{\lambda} \\
  &\qquad + \frac{q^{4\lambda + 2}}{(1 - q^{4\lambda+2})^2} K^{-\frac{1}{2}} A^{\lambda+\frac{1}{2}} B_1 B_2
    - \frac{q^{6\lambda + 3}}{(1 - q^{4\lambda+2})^2} K^{-\frac{1}{2}} B_1 A^{\lambda+\frac{1}{2}} B_2 \\
  &\qquad - \frac{q^{2\lambda + 1}}{(1 - q^{4\lambda+2})^2} K^{-\frac{1}{2}} B_2 A^{\lambda+\frac{1}{2}} B_1
    + \frac{q^{4\lambda + 2}}{(1 - q^{4\lambda+2})^2} K^{-\frac{1}{2}} B_2 B_1 A^{\lambda+\frac{1}{2}}.
\end{split}
\end{equation*}
The Hopf-algebra isomorphism $\sigma$ maps $\sigma(K) = K^{-1}$ and $\sigma(B_1) = B_2$.
Since $\sigma(\Omega_1) = \Omega_2$ it follows that
\begin{equation*}
\begin{split}
A^{\lambda} \Omega_2
  &= \frac{1}{q} \frac{(1 - q^{4\lambda + 4})}{(q - q^{-1})^2 (1 - q^{4\lambda + 2})} K^{-\frac{1}{2}} A^{\lambda + \frac{1}{2}}
    + q \frac{(1 - q^{4\lambda})}{(q - q^{-1})^2 (1 - q^{4\lambda + 2})} K^{\frac{1}{2}} A^{\lambda - \frac{1}{2}}
    - \frac{2}{(q - q^{-1})^2} A^{\lambda} \\
  &\qquad + \frac{q^{4\lambda + 2}}{(1 - q^{4\lambda+2})^2} K^{\frac{1}{2}} A^{\lambda+\frac{1}{2}} B_2 B_1
    - \frac{q^{6\lambda + 3}}{(1 - q^{4\lambda+2})^2} K^{\frac{1}{2}} B_2 A^{\lambda+\frac{1}{2}} B_1 \\
  &\qquad - \frac{q^{2\lambda + 1}}{(1 - q^{4\lambda+2})^2} K^{\frac{1}{2}} B_1 A^{\lambda+\frac{1}{2}} B_2
    + \frac{q^{4\lambda + 2}}{(1 - q^{4\lambda+2})^2} K^{\frac{1}{2}} B_1 B_2 A^{\lambda+\frac{1}{2}}.
\end{split}
\end{equation*}
The radial parts computed above correspond to the radial parts of \cite[Proposition 5.10]{AKR15}.
For the special case of the counit representation, the maps $\Pi_{\epsilon, \epsilon}(\Omega_1)$ and $\Pi_{\epsilon, \epsilon}(\Omega_2)$ coincide.
Identifying $z = q^{2\lambda + 1}$ and $A^{\lambda-\frac{1}{2}}$, $A^{\lambda}$, $A^{\lambda+\frac{1}{2}}$ with $q^{-1}z$, $z$, $qz$, we obtain the second order $q$-difference equation for the Chebyshev polynomials of the second kind.
In \cite{AKR15} we continue studying matrix valued spherical functions that are solutions to the second order $q$-difference equations $\Pi_{t, t}(\Omega_i)$ for all irreducible finite dimensional representations $t$.
\end{example}

\begin{example} \label{q-radial_exm_radial_uqsl3}
In this last example we study the radial part of the center of the quantum analogue of $(\SU(3), \mathrm{U}(2))$, see also Example \ref{q-radial_exm_uqsl3}.
This case is excluded in Letzter \cite{Let04}, because the restricted root system is non-reduced.
The radial part of the center of the quantum analogue of $(\SU(3), \mathrm{U}(2))$ restricted to the trivial representation is identified with the results of Dijkhuizen and Noumi \cite[Theorem 5.4]{DN98}.
Recall that the right coideal $\mathcal{B}_{\textbf{c}}$ is generated by
\begin{equation*}
B_1^{\textbf{c}} = F_1 - c_1 E_1 K_1^{-1}, \quad B_2^{\textbf{c}} = F_2 - c_2 E_2 K_1^{-1}, \quad K^{\pm 1} = (K_1 K_2^{-1})^{\pm 1}.
\end{equation*}
Let $\mathfrak{g} = \mathfrak{su}_3$.
If we allow third roots of $K_1$ and $K_2$ in $\Uq(\mathfrak{g})$ the center of $\mathcal{B}_{c}$ is formally generated by the two second order Casimir elements of the form
\begin{equation} \label{q-radial_eqn_CasimirUqsl3}
\begin{split}
\Omega_1 &= K_1^{\frac{1}{3}} K_2^{-\frac{1}{3}} \left[
  \frac{
    q^{-2} K_1 K_2 + K_1^{-1} K_2 + q^2 K_1^{-1} K_2^{-1}
  }{
    (q - q^{-1})^2
  }
  +
  q K_2 E_1 F_1 + q^{-1} K_1^{-1} E_2 F_2 - \check{E_3} F_3
\right], \\
\Omega_2 &= K_1^{\frac{1}{3}} K_2^{-\frac{1}{3}} \left[
  \frac{
    q^{-2} K_1 K_2 + K_1 K_2^{-1} + q^2 K_1^{-1} K_2^{-1}
  }{
    (q - q^{-1})^2
  }
  +
  q^{-1} K_2^{-1} E_1 F_1 + q K_1 E_2 F_2 - E_3 \check{F_3}
\right],
\end{split}
\end{equation}
where $E_3 = E_1 E_2 - q E_2 E_1 = [E_1, E_2]_q$, $F_3 = F_1 F_2 - q F_2 F_1 = [F_1, F_2]_q$, $\check{E_3} = E_1 E_2 - q^{-1} E_2 E_1 = [E_1, E_2]_{q^{-1}}$ and $\check{F_3} = F_1 F_2 - q^{-1} F_2 F_1 = [F_1, F_2]_{q^{-1}}$.
See also \cite{GZ91} and \cite{Rod91} for the explicit expression of \eqref{q-radial_eqn_CasimirUqsl3}.
We compute the radial part for $\widetilde{\Omega}_1 = K_1^{-\frac{1}{3}} K_2^{\frac{1}{3}} \Omega_1 \in \mathcal{U}_q(\mathfrak{g})$ and $\widetilde{\Omega}_2 = K_1^{-\frac{1}{3}} K_2^{\frac{1}{3}} \Omega_2 \in \mathcal{U}_q(\mathfrak{g})$. 
The computations modulo $(\mathcal{B}_{\textbf{c}})_+$ on the left and $(\mathcal{B}_{\textbf{d}})_+$ on the right are identified explicitly with orthogonal polynomials of the $q$-Askey scheme \cite{KLS10}.

The computation of the radial part is tedious but straightforward.
Therefore we omit most of the computations and use a couple of tricks to simplify the computations.
Introduce the flip operator $\sigma$ on $\Uq(\mathfrak{g})$ defined on the generators by $\sigma(X_i) = \sigma(X_{\tau(i)})$ and $\sigma(c_i) = \sigma(c_{\tau(i)})$ where $\tau = (1\ 2)$.
Note that the flip operator $\sigma$ is a Hopf $*$-algebra isomorphism on $\Uq(\mathfrak{g})$ leaving $\mathcal{B}_{\textbf{c}}$ and $\mathcal{B}_{\textbf{d}}$ invariant so that $\sigma(\widetilde{\Omega}_1) = \widetilde{\Omega}_2$.
Hence we only have to compute the radial part of $\widetilde{\Omega}_1$.

The quantum torus $\mathcal{A}$ is generated by $A^{\pm 1} = (K_1 K_2)^{\pm 1}$.
Let $\lambda \in \frac{1}{2}\ZZ$.
We first compute the radial part of expressions $A^{\lambda} F_X$ up to degree $4$, i.e. $|X| \leq 4$, where $X = (1)$, $X = (1, 1)$, $X = (1, 2)$, $X = (1, 1, 2)$, $X = (1, 2, 1)$, $X = (2, 2, 1)$, $X = (1, 2, 1, 2)$, $X = (1, 1, 2, 2)$ and $X = (2, 1, 1, 2)$.
Other expressions $A^{\lambda} F_X$ needed for the computation of the radial part can be obtained from the flip operator $\sigma$.
By Theorem \ref{q-radial_thm_main} we have
\begin{equation} \label{q-radial_eqn_uqsl3_X1}
A^{\lambda} F_1 = A^{\lambda} B_1^{\textbf{d}} - \frac{d_1}{c_1} q^{\lambda} B_1^{\textbf{c}} A^{\lambda} + \frac{d_1}{c_1} q^{2\lambda} A^{\lambda} F_1,
\end{equation}
Next, by Theorem \ref{q-radial_thm_main}, we have permutations $F_1 F_2 \rightarrow F_2 F_1 \xrightarrow{\sigma} F_1 F_2$ and $F_1^2 \rightarrow F_1^2$, where $\xrightarrow{\sigma}$ is used for the flip operator, 
\begin{equation}
\label{q-radial_eqn_uqsl3_X2}
\begin{split}
A^{\lambda} F_1 F_2 &= A^{\lambda} F_1 B_2^{\textbf{d}} 
  - d_2 \left( \frac{K_1 K_2^{-1} - K_1^{-1} K_2^{-1}}{q - q^{-1}} \right) A^{\lambda}
  - \frac{d_2}{c_2} q^{\lambda + 1} B_2^{\textbf{c}} A^{\lambda} F_1
  + \frac{d_2}{c_2} q^{2\lambda + 1} A^{\lambda} F_2 F_1, \\
A^{\lambda} F_1^2 &= A^{\lambda} F_1 B_1^{\textbf{d}} - \frac{d_1}{c_1} q^{\lambda - 2} B_1^{\textbf{c}} A^{\lambda} F_1 + \frac{d_1}{c_1} q^{2\lambda - 2} A^{\lambda} F_1^2.
\end{split}
\end{equation}
Using the flip operator we can now compute $A^{\lambda} F_X \in \check{\mathcal{B}}_{\textbf{c}} \check{\mathcal{A}} \check{\mathcal{B}}_{\textbf{d}}$ for all $|X| \leq 2$.
We proceed with $|X| = 3$.
The expressions used of degree three have permutation $F_1^2 F_2 \rightarrow F_2 F_1^2 \rightarrow F_1 F_2 F_1 \rightarrow F_1^2 F_2$, therefore we calculate
\begin{equation}
\label{q-radial_eqn_uqsl3_X3}
\begin{split}
A^{\lambda} F_1^2 F_2 &= A^{\lambda} F_1^2 B_2^{\textbf{d}} 
  - d_2 \left( \frac{q^4 K - K_1^{-1} K_2^{-1}}{q - q^{-1}} \right) A^{\lambda} F_1^2
  - d_2 q \left( \frac{ K - K_1^{-1} K_2^{-1}}{q - q^{-1}} \right) A^{\lambda} F_1 \\
  &\quad - \frac{d_2}{c_2} q^{\lambda + 2} B_2^{\textbf{c}} A^{\lambda} F_1^2
    + \frac{d_2}{c_2} q^{2\lambda + 2} A^{\lambda} F_2 F_1^2, \\
A^{\lambda} F_2 F_1^2 &= A^{\lambda} F_2 F_1 B_1^{\textbf{d}}
  - d_1 q^{-2} \left( \frac{K^{-1} - K_1^{-1} K_2^{-1}}{q - q^{-1}} \right) A^{\lambda} F_1 \\
  &\qquad - \frac{d_1}{c_1} q^{\lambda-1} B_1^{\textbf{c}} A^{\lambda} F_2 F_1
  + \frac{d_1}{c_1} q^{2\lambda-1} A^{\lambda} F_1 F_2 F_1, \\
A^{\lambda} F_1 F_2 F_1 &= A^{\lambda} F_1 F_2 B_1^{\textbf{d}}
  - d_1 \left( \frac{K^{-1} - q^{2} K_1^{-1} K_2^{-1}}{q - q^{-1}} \right) A^{\lambda} F_1 \\
  &\qquad - \frac{d_1}{c_1} q^{\lambda-1} B_1^{\textbf{c}} A^{\lambda} F_1 F_2
  + \frac{d_1}{c_1} q^{2\lambda - 1} A^{\lambda} F_1^2 F_2.
\end{split}
\end{equation}
For $|X|=4$ the computation splits into two permutations $F_1 F_2 F_1 F_2 \rightarrow F_2 F_1 F_2 F_1 \xrightarrow{\sigma} F_1 F_2 F_1 F_2$ and $F_1^2 F_2^2 \rightarrow F_2 F_1^2 F_2 \rightarrow F_2^2 F_1^2 \xrightarrow{\sigma} F_1 F_2^2 F_1 \xrightarrow{\sigma} F_1^2 F_2^2$. 
By Theorem \ref{q-radial_thm_main} the first permutation is
\begin{equation}
\label{q-radial_eqn_uqsl3_X4.1}
\begin{split}
A^{\lambda} F_1 F_2 F_1 F_2 &= A^{\lambda} F_1 F_2 F_1 B_2^{\textbf{d}}
  - d_2 q^{-1} \left( \frac{K_1 K_2^{-1} - K_1^{-1} K_2^{-1}}{q - q^{-1}} \right) A^{\lambda} F_2 F_1 \\
  &\quad - d_2 \left( \frac{K_1 K_2^{-1} - q^{-2} K_1^{-1} K_2^{-1}}{q - q^{-1}} \right) A^{\lambda} F_1 F_2 \\
  &\quad - \frac{d_2}{c_2} q^{\lambda} B_2^{\textbf{c}} A^{\lambda} F_1 F_2 F_1
  + \frac{d_2}{c_2} q^{2\lambda} A^{\lambda} F_2 F_1 F_2 F_1
\end{split}
\end{equation}
and the second permutation is
\begin{equation}
\label{q-radial_eqn_uqsl3_X4.2}
\begin{split}
A^{\lambda} F_1^2 F_2^2 &= A^{\lambda} F_1^2 F_2 B_2^{\textbf{d}}
  - d_2 \left( \frac{(q + q^{-1}) K_1 K_2^{-1} - (q^{-3} + q^{-1}) K_1^{-1} K_2^{-1}}{q - q^{-1}} \right) A^{\lambda} F_1 F_2 \\
  &\quad - \frac{d_2}{c_2} q^{\lambda} B_2^{\textbf{c}} A^{\lambda} F_1^2 F_2
  + \frac{d_2}{c_2} q^{2\lambda} A^{\lambda} F_2 F_1^2 F_2, \\
A^{\lambda} F_2 F_1^2 F_2 &= A^{\lambda} F_2 F_1^2 B_2^{\textbf{d}}
  - d_2 \left( \frac{(1 + q^{-2}) K_1 K_2^{-1} - (1 + q^{-2}) K_1^{-1} K_2^{-1}}{q - q^{-1}} \right) A^{\lambda} F_2 F_1 \\
  &\quad - \frac{d_2}{c_2} q^{\lambda} B_2^{\textbf{c}} A^{\lambda} F_2 F_1^2
  + \frac{d_2}{c_2} q^{2\lambda} A^{\lambda} F_2^2 F_1^2.
\end{split}
\end{equation}

With Theorem \ref{q-radial_thm_quantumiwasawa} we bring all terms of $A^{\lambda}\widetilde{\Omega}_1$ in the form of the quantum Iwasawa decomposition, for $K_2 E_1 F_1$ and $K_1^{-1} E_2 F_2$ we have
\begin{align*}
A^{\lambda} K_2 E_1 F_1 &= \frac{q^{2\lambda+3}}{c_2} K^{-1} A^{\lambda+1} F_2 F_1 - \frac{q^{\lambda-1}}{c_2} B_2^{\textbf{c}} K^{-1} A^{\lambda+1} F_1, \\
A^{\lambda} K_1^{-1} E_2 F_2 &= \frac{q^{2\lambda+1}}{c_1} A^{\lambda} F_1 F_2 - \frac{q^{\lambda+1}}{c_1} B_1^{\textbf{c}} A^{\lambda} F_2.
\end{align*}
To find the quantum Iwasawa decomposition for $A^{\lambda} \check{E_3} F_3$ and $A^{\lambda} E_3 \check{F_3}$ we apply a trick.
The $q$-commutator of $B_1^\textbf{c}$ and $B_2^\textbf{c}$ is
\begin{align*}
[B_1^\textbf{c}, B_2^\textbf{c}]_q &= F_3 - c_1 c_2 q^{-1} \check{E_3} K_1^{-1} K_2^{-1}
  + c_1 (q - q^{-1}) E_2 F_2 K_1^{-1} \\
  &\quad - c_1 q \left( \frac{K_2 - K_2^{-1}}{q - q^{-1}} \right) K_1^{-1}
  + c_2 \left( \frac{K_1 - K_1^{-1}}{q - q^{-1}} \right) K_2^{-1}.
\end{align*}
Hence we have
\begin{equation}
\label{q-radial_eqn_uqsl3_commutanttrick}
\begin{split}
A^{\lambda} &\check{E_3} F_3 = q^{2\lambda} \check{E_3} A^{\lambda} F_3 \\
  &= \frac{q^{2\lambda+1}}{c_1c_2} \left( -[B_1^\textbf{c}, B_2^\textbf{c}]_q + F_3 + c_1(q - q^{-1})E_2 F_2 K_1^{-1} \right. \\
  &\qquad \qquad \left. - c_1 q \left(\frac{K_2 - K_2^{-1}}{q-q^{-1}}\right)K_1^{-1} + c_2\left(\frac{K_1-K_1^{-1}}{q-q^{-1}}\right)K_2^{-1} \right) A^{\lambda+1} F_3, \\
  &= \frac{q^{2\lambda+1}}{c_1 c_2} \left( -[B_1^{\textbf{c}}, B_2^{\textbf{c}}]_q A^{\lambda+1} F_3 + q^{2\lambda+2} A^{\lambda+1} F_3^2 + q^{2\lambda + 3}(q - q^{-1}) A^{\lambda+1} F_1 F_2 F_3 \right. \\
  &\qquad \left. -q^{\lambda+2}(q - q^{-1}) B_1^{\textbf{c}} A^{\lambda+1} F_2 F_3 + \frac{c_2 K_1 K_2^{-1} + (c_1 q - c_2) K_1^{-1} K_2^{-1} - c_1 q K_1^{-1} K_2}{(q - q^{-1})} A^{\lambda+1} F_3 \right), \\
\end{split}
\end{equation}

We assume that $\mathcal{B}_{\textbf{c}}$ and $\mathcal{B}_{\textbf{d}}$ are $*$-invariant with respect to the $*$-structure corresponding to the compact real form.
By Remark \ref{q-radial_rmk_star} we find that $c_1 c_2 = q^3$ and $d_1 d_2 = q^3$. 
Let $\sim$ be the equivalence relation module $(\mathcal{B}_{\textbf{c}})_+$ from the left and $(\mathcal{B}_{\textbf{d}})_+$ from the right.
From \eqref{q-radial_eqn_uqsl3_X1} and \eqref{q-radial_eqn_uqsl3_X3} it follows that $A^{\lambda}F_X \sim 0$ if $|X| = 1$ or $|X| = 3$.
For $|X| = 2$ we have from \eqref{q-radial_eqn_uqsl3_X2} that $A^{\lambda} F_1^2 \sim 0$ and 
\begin{equation}
\label{q-radial_eqn_uqsl3_restricted_X2}
\begin{split}
(q - q^{-1})(1 - q^{4\lambda + 2}) A^{\lambda} F_1 F_2 
  &\sim -\left(d_2 + \frac{q^{2\lambda + 4}}{c_2}\right) A^{\lambda}
    + \left(d_2 + \frac{q^{2\lambda + 4}}{c_2}\right) A^{\lambda-1},
\end{split}
\end{equation}
With \eqref{q-radial_eqn_uqsl3_X4.1} and \eqref{q-radial_eqn_uqsl3_X4.2} we compute
\begin{equation}
\label{q-radial_eqn_uqsl3_restricted_X4}
\begin{split}
(q - q^{-1})(1 - q^{4\lambda}) A^{\lambda} F_1 F_2 F_1 F_2
  &\sim -\left(d_2 q^{-1} + \frac{q^{2\lambda + 3}}{c_2}\right) A^{\lambda} F_2 F_1
    + \left(d_2 q^{-1} + \frac{q^{2\lambda + 1}}{c_2}\right) A^{\lambda-1} F_2 F_1 \\
  &\quad - \left(d_2 \mu + \frac{q^{2\lambda + 2}}{c_2}\right) A^{\lambda} F_1 F_2
    + \left(d_2 q^{-2} + \frac{q^{2\lambda + 2}}{c_2}\right) A^{\lambda-1} F_1 F_2, \\
(q - q^{-1}) (1 - q^{8\lambda}) A^{\lambda} F_2 F_1^2 F_2
  &\sim  - \left(d_2(1+q^{-2}) + \frac{q^{2\lambda + 3}}{c_2} (q + q^{-1})\right) A^{\lambda} F_2 F_1 \\
  &\quad + \left(d_2(1 + q^{-2}) + \frac{q^{2\lambda + 3}}{c_2} (q^{-3} + q^{-1})\right) A^{\lambda-1} F_2 F_1 \\
  &\quad - q^{4\lambda} \left(d_1(1 + q^{-2}) + \frac{q^{2\lambda + 3}}{c_1} (q + q^{-1})\right) A^{\lambda} F_1 F_2 \\
  &\quad + q^{4\lambda} \left(d_1 (1 + q^{-2}) + \frac{q^{2\lambda + 3}}{c_1} (q^{-3} + q^{-1})\right) A^{\lambda-1} F_1 F_2.
\end{split}
\end{equation}
From \eqref{q-radial_eqn_uqsl3_restricted_X2} and \eqref{q-radial_eqn_uqsl3_restricted_X4} it follows
\begin{align*}
A^{\lambda} F_1 F_2 &\sim f_{12}(\lambda) A^{\lambda} + g_{12}(\lambda) A^{\lambda - 1}, \\
A^{\lambda} F_1 F_2 F_1 F_2 &\sim f_{1212}(\lambda) A^{\lambda} + g_{1212}(\lambda) A^{\lambda-1} + h_{1212}(\lambda) A^{\lambda-2}, \\
A^{\lambda} F_2 F_1^2 F_2 &\sim f_{2112}(\lambda) A^{\lambda} + g_{2112}(\lambda) A^{\lambda-1} + h_{2112}(\lambda) A^{\lambda-2},
\end{align*}
where
\begin{align*}
f_{12}(\lambda) &= g_{12}(\lambda) = \frac{d_2 q}{(1 - q^2)} \frac{(1 + q^{2\lambda + 1})}{(1 - q^{4\lambda + 2})}, \\ 
f_{1212}(\lambda) &= \frac{1}{(1 - q^2)^2} \frac{
  (q^{2\lambda + 5} + d_2^2 q^2 + q^4 + \frac{d_2}{c_2} q^{2\lambda + 4})(1 + q^{2\lambda + 1})
}{
  (1 - q^{4\lambda + 2})(1 - q^{4\lambda})
}, \\
g_{1212}(\lambda) &= -(f_{1212}(\lambda) + h_{1212}(\lambda)), \\
h_{1212}(\lambda) &= \frac{1}{(1 - q^2)^2} \frac{
  (q^4 + q^{2\lambda+3} + d_2^2 + \frac{d_2}{c_2} q^{2\lambda + 4})(1 + q^{2\lambda - 1})
}{
  (1 - q^{4\lambda})(1 - q^{4\lambda - 2})
}, \\
f_{2112}(\lambda) &= q^3 \frac{(1 + q^2)}{(1 - q^2)^2} \frac{
  (1 + q^{2\lambda + 3})(1 + q^{2\lambda + 1})
}{
  (1 - q^{4\lambda + 2})(1 - q^{4\lambda})
}, \\
g_{2112}(\lambda) &= -(f_{2112}(\lambda) + h_{2112}(\lambda)), \\
h_{2112}(\lambda) &= q^2 \frac{(1 + q^2)}{(1 - q^2)^2} 
\frac{
  (1 + q^{2\lambda + 3})(1 + q^{2\lambda - 1})
}{
  (1 - q^{4\lambda})(1 - q^{4\lambda - 2})
}.
\end{align*}
And therefore we have
\begin{align*}
A^{\lambda} F_3 &\sim (f_{12}(\lambda) - qf_{21}(\lambda)) A^{\lambda} + (g_{12}(\lambda) - q g_{21}(\lambda))A^{\lambda-1}, \\
A^{\lambda} F_1 F_2 F_3 &\sim (f_{1212}(\lambda) - q f_{1221}(\lambda)) A^{\lambda} + (g_{1212}(\lambda) - q g_{1221}(\lambda)) A^{\lambda-1} + (h_{1212}(\lambda) - q h_{1221}(\lambda)) A^{\lambda - 2}, \\
A^{\lambda} F_2 F_1 F_3 &\sim (f_{2112}(\lambda) - q f_{2121}(\lambda)) A^{\lambda} + (g_{2112}(\lambda) - q g_{2121}(\lambda)) A^{\lambda-1} + (h_{2112}(\lambda) - q h_{2121}(\lambda)) A^{\lambda - 2}, \\
A^{\lambda} F_3^2 &\sim (f_{123}(\lambda) - q f_{213}(\lambda)) A^{\lambda} + (g_{123}(\lambda) - q g_{213}(\lambda)) A^{\lambda} + (h_{123}(\lambda) - q h_{213}(\lambda)) A^{\lambda - 2}.
\end{align*}
Hence \eqref{q-radial_eqn_uqsl3_commutanttrick} becomes
\begin{align*}
A^{\lambda} \check{E}_3 F_3 
  &\sim q^{2\lambda - 2} \left( q^{2\lambda+2} f_{33}(\lambda+1) + q^{2\lambda+3} (q - q^{-1}) f_{123}(\lambda+1) + \frac{(c_2 - q^{4} c_2^{-1})}{(q - q^{-1})} f_3(\lambda+1) \right) A^{\lambda+1} \\
  &\qquad + q^{2\lambda - 2} \left( q^{2\lambda + 2} g_{33}(\lambda+1) + q^{2\lambda + 3} (q - q^{-1}) g_{123}(\lambda+1) \right. \\
  &\qquad \qquad \qquad \left. + \frac{(c_2  - q^{4}c_2^{-1})}{(q - q^{-1})} g_3(\lambda+1) + \frac{(q^4 c_2^{-1} - c_2)}{(q - q^{-1})} f_3(\lambda) \right) A^{\lambda} \\
  &\qquad + q^{2\lambda - 2} \left( q^{2\lambda + 2} h_{33}(\lambda+1) + q^{2\lambda + 3} (q - q^{-1}) h_{123}(\lambda+1) + \frac{(q^4 c_2^{-1} - c_2)}{(q - q^{-1})} g_3(\lambda) \right) A^{\lambda-1}.
\end{align*}
Combining the explicit expressions of $A^{\lambda} E_1 F_2$, $A^{\lambda} E_2 F_2$ and $A^{\lambda} \check{E}_3 F_3$ modulo $\sim$ gives
\begin{equation} \label{q-radial_eqn_scalarqdiffUqsl3}
(1 - q^2)^2 A^{\lambda} \widetilde{\Omega}_1 \sim f(q^{\lambda}) (A^{\lambda+1} - A^{\lambda}) + f(q^{-\lambda-2}) (A^{\lambda-1} - A^{\lambda}) + \frac{(1 - q^6)}{(1 - q^2)} A^{\lambda},
\end{equation}
where $f(q^{\lambda})$ is
\begin{equation*}
f(q^{\lambda}) = \frac{
  (1 + c_2 d_2 q^{2\lambda})(1 + c_2^{-1} d_2^{-1} q^{2\lambda + 6})(1 - c_2 d_2^{-1} q^{2\lambda + 4})(1 - c_2^{-1} d_2 q^{2\lambda + 4})
}{
  (1 - q^{4\lambda + 4})(1 - q^{4\lambda + 6})
},
\end{equation*}
and $g(\lambda) = f(-\lambda - 2)$.
Observe that from \eqref{q-radial_eqn_scalarqdiffUqsl3} and the flip operator $\sigma$ we have $A^{\lambda} \widetilde{\Omega}_1 \sim A^{\lambda} \widetilde{\Omega}_2$.
With the identification $z = q^{2\lambda + 2}$ and $A^{\lambda-1}, A^{\lambda}, A^{\lambda+1} \mapsto q^{-1}z, z, q z$ in \eqref{q-radial_eqn_scalarqdiffUqsl3}, the polynomial solutions of the $q$-difference equation \eqref{q-radial_eqn_scalarqdiffUqsl3} are Askey-Wilson polynomials in two free parameters.
The Askey-Wilson polynomials are of the form $p_n(x; -c_2 d_2 q^{-2}, -c_2^{-1} d_2^{-1} q^4, c_2 d_2^{-1} q^2, c_2^{-1} d_2 q^2 | q^2)$.
Upon using $p^{(\alpha, \beta)}_{n}(x;s,t|q) = p_n(x; q^{\frac{1}{2}}ts^{-1}, q^{\frac{1}{2} + \alpha} st^{-1}, -q^{\frac{1}{2}}(st)^{-1}, -q^{\frac{1}{2}+\beta}st | q)$ we find the polynomial solutions for \eqref{q-radial_eqn_scalarqdiffUqsl3} to be
\begin{equation*}
p_n^{(1,0)}(x; q^{-1}c_2, q^{-2}d_2 | q^2) = p_n(x; -c_2 d_2 q^{-2}, -c_2^{-1} d_2^{-1} q^4, c_2 d_2^{-1} q^2, c_2^{-1} d_2 q^2 | q^2).
\end{equation*}
This result extends the classification given by Letzter \cite[Theorem 8.2]{Let04} to an example of a quantum symmetric pair where the restricted root system is non-reduced.
With parametrization $c = q^{\sigma + 1}$ and $d = q^{\tau + 2}$, where $\sigma, \tau \in \RR$, we identify the radial part of the center restricted to the trivial representation with Dijkhuizen and Noumi \cite[Theorem 5.4]{DN98}.
Using the same calculations we can compute the second order $q$-difference equation $\Pi_{t_1, t_2}(\widetilde{\Omega}_i)$ for any irreducible finite dimensional representations $t_1$ and $t_2$ on respectively $\mathcal{B}_{\textbf{c}}$ and $\mathcal{B}_{\textbf{d}}$.
We will not work out the details in this paper.
But it will be interesting if the matrix valued second order $q$-difference equations $\Pi_{t_1, t_2}(\widetilde{\Omega}_i)$ for any irreducible finite dimensional representations $t_1$ and $t_2$ can be related to matrix valued spherical functions and matrix valued orthogonal polynomials.
\end{example}

\subsection*{Acknowledgments}
The author thanks Erik Koelink and Pablo Rom\'an for useful discussions.
Parts of this paper have been discussed with Stefan Kolb, the author thanks him for his input and his hospitality during the visit of the author to Newcastle.
The research of the author is supported by the Netherlands Organization for Scientific Research (NWO) under project number \textbf{613.001.005} and by the Belgian Interuniversity Attraction Pole Dygest \textbf{P07/18}.

\end{document}